\newcolumntype{C}{>{$}c<{$}}
\newcolumntype{L}{>{$}l<{$}}
\newcolumntype{R}{>{$}r<{$}}
\newtheorem{theorem}{Theorem}[section]
\newtheorem{lemma}[theorem]{Lemma}
\newtheorem{proposition}[theorem]{Proposition}
\newtheorem{corollary}[theorem]{Corollary}
\theoremstyle{definition}
\newtheorem{definition}[theorem]{Definition}
\newtheorem{construction}[theorem]{Construction}
\newtheorem{example}[theorem]{Example}
\newtheorem{remark}[theorem]{Remark}
\newtheorem{setting}[theorem]{Setting}
\numberwithin{equation}{theorem}
\def\CC{{\mathbb C}}
\def\KK{{\mathbb K}}
\def\TT{{\mathbb T}}
\def\ZZ{{\mathbb Z}}
\def\QQ{{\mathbb Q}}
\def\PP{{\mathbb P}}
\def\KKK{\mathcal{K}}
\def\OOO{{\mathcal O}}
\def\RRR{{\mathcal R}}
\def\bangle#1{{\langle #1 \rangle}}
\DeclareMathOperator{\Cl}{\mathrm{Cl}}
\DeclareMathOperator{\cone}{\mathrm{cone}}
\DeclareMathOperator{\conv}{\mathrm{conv}}
\DeclareMathOperator{\im}{\mathrm{im}}
\DeclareMathOperator{\lcm}{\mathrm{lcm}}
\DeclareMathOperator{\lin}{\mathrm{lin}}
\DeclareMathOperator{\Spec}{\mathrm{Spec}}
\DeclareMathOperator{\trop}{\mathrm{trop}}
\newcommand{\linmin}{-3}%
\newcommand{\linmax}{4}%
\newcommand{\leafzero}{2.75}%
\newcommand{\leafone}{3.5}%
\newcommand{\leaftwo}{4.5}%
\title[On a combinatorial description of the Gorenstein Index]{On a combinatorial description of the Gorenstein Index for varieties with torus action}
\author[Philipp Iber, Eva Reinert, Milena Wrobel]{Philipp Iber, Eva Reinert, Milena Wrobel}
\address{Institut f\"ur Mathematik, Universit\"at Oldenburg,
26111 Oldenburg, Germany}
\email{philipp.iber@uni-oldenburg.de}
\address{Institut f\"ur Mathematik, Universit\"at Oldenburg,
26111 Oldenburg, Germany}
\email{eva.reinert1@uni-oldenburg.de}
\address{Institut f\"ur Mathematik, Universit\"at Oldenburg,
26111 Oldenburg, Germany}
\email{milena.wrobel@uni-oldenburg.de}
\subjclass[2010]{14M25, 14J45, 52B20, 14L30}
\begin{document}
\bibliographystyle{abbrv}

\begin{abstract}
The anticanonical complex is a combinatorial tool that was invented to extend the features of the Fano polytope from toric geometry to wider classes of varieties.
In this note we show that the Gorenstein index of Fano varieties with torus action of complexity one (and even more general of the so-called general arrangement varieties) can be read off its anticanonical complex in terms of lattice distances in full analogy to the toric Fano polytope. As an application we give concrete bounds on the defining data
of almost homogeneous Fano threefolds of Picard number one having a reductive automorphism group with two-dimensional maximal torus depending on their Gorenstein index.
\end{abstract}

\maketitle

\section{Introduction}
The main objective of this article is to contribute to the development of combinatorial methods for the study of geometric properties of Fano varieties. 
The model case is toric geometry. Here we have the well-known one-to-one correspondence between toric Fano varieties $X$ and the so-called \emph{Fano polytopes} $A_X$. 
These polytopes allow to describe several algebraic and geometric invariants of the corresponding toric varieties in a purely combinatorial manner. One of these invariants is the \emph{Gorenstein index}, that is the smallest positive integer $\iota_X$ such that $\iota_X$-times the canonical divisor $\mathcal{K}_X$ of $X$ is Cartier. In the toric case, this invariant is encoded in the lattice distances of the facets of the Fano polytope, see \cite{cls}. This fact has been used by several authors to contribute to the classification of toric Fano varieties of low Gorenstein index; see \cites{con, kasNil,BatBo,Ath,BrunsRoemer,BaNill}.

The purpose of this note is to generalize this combinatorial Gorenstein criterion 
to Fano varieties $X$ with an effective action of an algebraic torus $\TT$ of higher complexity, where the latter means that the difference $\mathrm{dim}(X) - \mathrm{dim}(\TT)$ is greater or equal to one. More precisely we consider Fano \emph{general arrangement varieties} as introduced in \cite{wrobel:ta_hc}. These are varieties
$X$ coming with a torus action of arbitrary complexity $c$ that gives rise to a specific rational quotient $X \dashrightarrow \PP^c$,
the so called \emph{maximal orbit quotient}, whose critical values form a general hyperplane arrangement.
Note that this class comprises i.a.~all Fano varieties with torus action of complexity one as well as all toric varieties.

By realizing the Fano general arrangement varieties $X$ as subvarieties of toric varieties $Z$, we
can replace the toric Fano polytope with a polyhedral complex, the so-called \emph{anticanonical complex} $\mathcal{A}_X$, which is supported on the tropical variety of $X \subseteq Z$. 

\begin{center}
\begin{tikzpicture}[tdplot_main_coords,
			edge/.style={},
			leaf0/.style={},
			leaf1/.style={},
			leaf2/.style={},
			leaf0e/.style={},
			leaf1e/.style={},
			leaf2e/.style={},
			le/.style={},
			axis/.style={},
			scale=.45]
	\coordinate (o) at (0,0,0);
	\coordinate (leaf0m) at (-\leafzero,-\leafzero,\linmin);
	\coordinate (leaf0p) at (-\leafzero,-\leafzero,\linmax);
	\coordinate (leaf0vm) at (\leafzero,\leafzero,\linmin);
	\coordinate (leaf0vp) at (\leafzero,\leafzero,\linmax);
	\coordinate (linm) at (0,0,\linmin);
	\coordinate (linp) at (0,0,\linmax);
	\coordinate (leaf1m) at (\leafone,0,\linmin);
	\coordinate (leaf1p) at (\leafone,0,\linmax);
	\coordinate (leaf1vm) at (-\leafone,0,\linmin);
	\coordinate (leaf1vp) at (-\leafone,0,\linmax);
	\coordinate (leaf2m) at (0,\leaftwo,\linmin);
	\coordinate (leaf2p) at (0,\leaftwo,\linmax);
	\coordinate (leaf2vm) at (0,-\leaftwo,\linmin);
	\coordinate (leaf2vp) at (0,-\leaftwo,\linmax);
	\coordinate (v01) at (-1,-1,-2);
	\coordinate (v02) at (-2,-2,-1);
	\coordinate (v1) at (2,0,1);
	\coordinate (v2) at (0,3,2);
	\coordinate (e1) at (0,0,2);
	\coordinate (e2) at (0,0,-1);
	
	
	\draw[edge,leaf2e] (o) -- (v2);
	\draw[edge,leaf2e] (v2) -- (e1);
	\draw[edge,leaf2e] (v2) -- (e2);
	\filldraw[leaf2e,opacity=.2] (e1) -- (e2) -- (v2) -- cycle;
	
	
	\filldraw[leaf2e] (v2) circle (2pt);
	
	\draw[edge,leaf1e] (o) -- (v1);
	\draw[edge,leaf1e] (v1) -- (e1);
	\draw[edge,leaf1e] (v1) -- (e2);
	\filldraw[leaf1e,opacity=.2] (e1) -- (e2) -- (v1) -- cycle;
	
	
	\filldraw[] (v1) circle (2pt);  
	
	
	\draw[edge,leaf0e] (o) -- (v01);
	\draw[edge,leaf0e] (o) -- (v02);
	\draw[edge,leaf0e] (v02) -- (e1);
	\draw[edge,leaf0e] (v01) -- (e2);
	\draw[edge,leaf0e] (v01) -- (v02);
	\filldraw[leaf0e,opacity=.2] (e1) -- (o) -- (v02) -- cycle;
	\filldraw[leaf0e,opacity=.2] (v01) -- (o) -- (v02) -- cycle;
	\filldraw[leaf0e,opacity=.2] (e2) -- (o) -- (v01) -- cycle;
	
	
	\filldraw[] (v01) circle (2pt);
	\filldraw[] (v02) circle (2pt);
	
	\draw[edge] (o) -- (e1);
	\draw[edge] (o) -- (e2);
	
	
	\filldraw[] (o) circle (2pt);
	\filldraw[] (e1) circle (2pt);
	\filldraw[] (e2) circle (2pt);
	
	\draw[] (linm) node{\small{$\mathcal{A}_X$ for $X=V(T_0^2T_1 + T_2^2 + T_3^3)\subseteq \PP_{5,8,9,6}$}};
	
\end{tikzpicture}	
\end{center}

The anticanonical complex was
introduced in \cite{bhhn} for varieties with torus action of complexity one and later generalized i.a.\ to the general arrangement case in \cites{hw19,hmw} and has so far been successfully used 
for the classification of singular Fano varieties; see \cites{abhw,bhhn, bh, hmw, h_quadrics, hw19}.
Our main result states that for Fano general arrangement varieties the Gorenstein index can be read off the anticanonical complex 
in full analogy to the toric Fano polytope case:
\begin{theorem}\label{thm:intro}
    Let $X \subseteq Z$ be a Fano general arrangement variety with anticanonical complex $\mathcal{A}_X$. Then the Gorenstein index $\iota_X$ of $X$ equals the least common multiple of the lattice distances of the maximal cells in the boundary of $\mathcal{A}_X:$
    $$
    \iota_X := \mathrm{lcm}(d(0, F); \ F \in \partial\mathcal{A}_X).
    $$
\end{theorem}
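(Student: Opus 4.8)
The plan is to reduce Cartierness of $\iota\mathcal{K}_X$ to a condition on the affine charts of $X$ indexed by the maximal cells of $\mathcal{A}_X$, to rephrase that condition via local divisor class groups, and to identify the resulting orders with the lattice distances $d(0,F_\tau)$. Since $X$ is Fano it is $\QQ$-Gorenstein, so $\iota_X$ is the least positive integer $\iota$ for which $\iota\mathcal{K}_X$ is Cartier. Refining the fan of $Z$ if necessary (which changes neither $\mathcal{A}_X$ nor $\iota_X$) we may assume that it is adapted to $\trop(X)$; then the maximal cones $\tau$ of $\trop(X)$ are exactly the cones carrying a maximal cell $\mathcal{A}_X\cap\tau$ of the anticanonical complex, with distinguished point $x_\tau\in X$ and with $F_\tau$ the unique facet of $\mathcal{A}_X\cap\tau$ avoiding the origin, and $\tau\mapsto F_\tau$ identifies these with the maximal cells of $\partial\mathcal{A}_X$. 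As $\mathcal{K}_X$ is $\TT$-invariant and Cartierness is local, $\iota\mathcal{K}_X$ is Cartier if and only if its germ at each $x_\tau$ is principal; so it suffices to show, for every such $\tau$, that this holds exactly when $d(0,F_\tau)\mid\iota$.

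Fix such a $\tau$. The germ of $\iota\mathcal{K}_X$ at $x_\tau$ is principal precisely when $\iota$ annihilates the class of $\mathcal{K}_X$ in the local divisor class group $\Cl(X,x_\tau)$. Using the Cox ring presentation of $X\subseteq Z$, the embedding induces an isomorphism $\Cl(X,x_\tau)\cong\Cl(Z,x_\tau)$ onto the local class group of the toric chart of $\tau$ --- a finite abelian group that is read off the defining matrix $P$ and the cone $\tau$ exactly as in toric geometry --- and adjunction identifies the image of $[\mathcal{K}_X]$ with that of $[\mathcal{K}_Z]+\sum_i\delta_i$, where $\delta_i\in\Cl(Z)$ are the degrees of the equations cutting $X$ out of $Z$. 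Writing $q_\tau$ for the order of this class in $\Cl(X,x_\tau)$, we thus obtain
\[
\iota\mathcal{K}_X\ \text{is Cartier near}\ x_\tau \quad\Longleftrightarrow\quad q_\tau\mid\iota .
\]
It remains to prove $q_\tau=d(0,F_\tau)$. This is the quantitative sharpening of the known fact that $X$ is Gorenstein exactly when every vertex of $\mathcal{A}_X$ is a lattice point: by its construction, the vertex of $\mathcal{A}_X$ on a ray $\varrho$ of $\trop(X)$ sits at the primitive generator $v_\varrho$ of $\varrho$ rescaled by the discrepancy datum attached to $\varrho$ --- which along the ``leaf'' rays encodes the exponents of the defining equations (hence the classes $\delta_i$) and is trivial along the toric lineality directions --- and this placement is arranged precisely so that $F_\tau$ spans, inside the linear span $\lin(\tau)$, the affine hyperplane $\{\,v: u_\tau(v)=1\,\}$, where $u_\tau$ is the (in general merely rational) support form of $-\mathcal{K}_X$ over $\tau$. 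On the one hand the denominator of $u_\tau$, i.e.\ the least positive integer clearing it, equals $q_\tau$, by the same computation that gives this in the toric case; on the other hand, since $F_\tau$ spans the lattice hyperplane $\{u_\tau=1\}$ off the origin, that denominator is by definition the lattice distance $d(0,F_\tau)$. Combining the three steps, $\iota\mathcal{K}_X$ is Cartier if and only if $d(0,F)\mid\iota$ for every maximal cell $F$ of $\partial\mathcal{A}_X$, so $\iota_X=\lcm(d(0,F):F\in\partial\mathcal{A}_X)$. As a consistency check, for toric $X$ the complex $\mathcal{A}_X$ is the Fano polytope, $u_\tau$ is the supporting form of one of its facets, and one recovers the classical criterion of \cite{cls}.

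The step I expect to be the main obstacle is this last one together with the local class-group isomorphism of the second: one must carry the \emph{denominators} --- not just the dichotomy trivial/non-trivial --- through the Cox-ring and adjunction bookkeeping that fixes the vertices of $\mathcal{A}_X$, and verify that the non-toric correction $\sum_i\delta_i$ is accounted for by exactly those vertex positions, uniformly over the leaf cones of $\trop(X)$ and its lineality part. This is where the general arrangement hypothesis is genuinely needed: it is what makes $X$ meet the toric strata of $Z$ transversally enough that $\Cl(X,x_\tau)\cong\Cl(Z,x_\tau)$ and that $-\mathcal{K}_X$ admits a well-defined support form over each $\tau$, and it is what keeps the statement from being a mere cone-by-cone transcription of the toric criterion.
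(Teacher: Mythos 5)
Your overall strategy --- reduce to the local Cartier index of the pulled-back anticanonical divisor via the isomorphism $\Cl(Z_X)\to\Cl(X)$ of class groups and Picard groups, compute that index torically chart by chart, and identify it with a lattice distance --- matches the first half of the paper's argument (Lemma \ref{lem:GorensteinIndexViaDivCHiU} and the opening step of the proof of Proposition \ref{prop:mainProp}). But your very first reduction contains a genuine gap: you assert that refining the fan of $Z$ to be adapted to $\trop(X)$ ``changes neither $\mathcal{A}_X$ nor $\iota_X$''. The first claim is true by construction; the second is false in general. The refinement $\Sigma'=\Sigma\sqcap\trop(X)$ is the weakly tropical resolution, and the closure of $X\cap\TT$ in the refined toric variety is the \emph{proper transform} $X'$, a genuine modification of $X$ (in Example \ref{ex:AKKandGorenstein} it introduces new divisors over the rays through $u_1,u_2$) whose Gorenstein index differs from $\iota_X$ in general. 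Consequently the local charts at which Cartierness of $\iota\mathcal{K}_X$ must be tested are indexed by the cones of the \emph{original} fan $\Sigma$ --- including the big cones --- whereas the maximal cells of $\partial\mathcal{A}_X$ are indexed by cones of $\Sigma'$. The bijection $\tau\mapsto F_\tau$ between charts and boundary cells that your argument relies on does not exist: a single big cone $\sigma\in\Sigma$ carries one local Cartier index $c_\sigma$ but several maximal boundary cells, one for each leaf of $\trop(X)$ that $\sigma$ meets.

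What is missing is therefore exactly the combinatorial heart of the paper's proof: the identity
$$
d(0,B_\sigma^{(i)})=\mathrm{lcm}\bigl(d(0,C_{\sigma'});\ \sigma'\in\Sigma',\ \sigma'\subseteq\sigma\bigr)
$$
for a big cone $\sigma$, where $B_\sigma^{(i)}=\sigma\cap\{v;\ \bangle{u,v}=-1\}$ is the full ``roof'' over $\sigma$ and the $C_{\sigma'}$ are its slices along the leaves. The paper proves this via Lemma \ref{lem:AKKPartsToWholeRoof} (the lcm of the lattice distances of the leaf-wise slices of a hyperplane recovers the lattice distance of the whole hyperplane, provided the slices are full-dimensional and the hyperplane meets the lineality space in dimension $s-1$), which in turn rests on Proposition \ref{prop:fullDimInterMaxBigCone} (a maximal big cone meets the lineality space of $\trop(X)$ in full dimension). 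None of this is present in, or follows from, your outline; without it the inequality $\mathrm{lcm}(d(0,F);\ F\in\partial\mathcal{A}_X)\ge\iota_X$ --- i.e., that the boundary cells jointly detect \emph{all} of the local Cartier index over a big cone rather than a proper divisor of it --- is unproved. The remaining steps of your proposal (the local class group isomorphism, the identification of the local order with the denominator of the support form and hence with a lattice distance, as in Lemma \ref{lem:latticeDistanceMinimumLemma}) are sound and coincide with the paper's.
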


As an application of our result
we consider the $\QQ$-factorial rational almost homogeneous Fano varieties of Picard number one with reductive automorphism group having a maximal torus of dimension two that were described in \cite{ahhl14}. Recall that these varieties are uniquely determined up to isomorphy by their divisor class group graded Cox ring and their anticanonical class, see \ref{rem:CoxRingAndAmpleClass}. For fixed Gorenstein index we give concrete bounds on these defining data of the varieties, see Propositions \ref{prop:set1}, \ref{prop:set2}, \ref{prop:set3}, \ref{prop:set4} and~\ref{prop:set5}.

This allows in particular a computer aided search for varieties of small Gorenstein index.
For illustration we list all $\QQ$-factorial rational almost homogeneous Fano varieties of Picard number one with reductive automorphism group having a maximal torus of dimension two and Gorenstein index smaller or equal to three~here:

\begin{corollary}\label{cor:Classification}
Every three-dimensional $\QQ$-factorial Fano variety of Picard number one with reductive automorphism group having a maximal torus of dimension two and Gorenstein index $\iota_X$ smaller or equal to three is isomorphic to one of the following varieties $X$, specified by their $\mathrm{Cl}(X)$ graded Cox ring $\mathcal{R}(X)$, a matrix $[w_1\ldots,w_r]$ of generator degrees and their anticanonical classes $-\mathcal{K}_X$ as follows:

\newcounter{gorindvar}
\setcounter{gorindvar}{0}
\newcommand{\gorno}{\refstepcounter{gorindvar}\thegorindvar}
{\setlength{\tabcolsep}{5pt}\setlength{\arraycolsep}{2pt}
\begin{longtable}{cccccc}
No.
&
$\RRR(X)$
&
$\Cl(X)$
&
$[w_1,\ldots,w_r]$
&
$-\KKK_X$
&
$\iota_X$
\\
\toprule
$\gorno$\label{arzhantsev-et-al-Prop-8-6-iv} & 
$\frac{\CC[T_1,T_2,T_3,T_4,T_5]}{\bangle{T_{1}^{2}+T_{2} T_{3}+T_{4} T_{5}}}$ &
$\ZZ$ &
{\tiny $\left[\begin{array}{ccccc}
1 & 1 & 1 & 1 & 1 
\end{array}\right]$} &
{\tiny $\left[\begin{array}{c}
3 
\end{array}\right]$} &
$1$
\\\midrule
$\gorno$ & 
$\frac{\CC[T_1,T_2,T_3,T_4,T_5]}{\bangle{T_{1} T_{2}+T_{3} T_{4}+T_{5}^{2}}}$ &
$\ZZ \times \ZZ_3$ &
{\tiny $\left[\begin{array}{ccccc}
1 & 1 & 1 & 1 & 1 
\\
 \overline{1} & \overline{2} & \overline{2} & \overline{1} & \overline{0} 
\end{array}\right]$} &
{\tiny $\left[\begin{array}{c}
3 
\\
 \overline{0} 
\end{array}\right]$} &
$1$ 
\\\midrule
$\gorno$ & 
$\frac{\CC[T_1,T_2,T_3,T_4,T_5]}{\bangle{T_{1}^{2}+T_{2} T_{3}+T_{4}^{3}}}$ &
$\ZZ$ &
{\tiny $\left[\begin{array}{ccccc}
3 & 3 & 3 & 2 & 1 
\end{array}\right]$} &
{\tiny $\left[\begin{array}{c}
6 
\end{array}\right]$} &
$1$
\\\midrule
$\gorno$ & 
$\frac{\CC[T_1,T_2,T_3,T_4,T_5]}{\bangle{T_{1}^{2}+T_{2} T_{3}+T_{4}^{4}}}$ &
$\ZZ \times \ZZ_2$ &
{\tiny $\left[\begin{array}{ccccc}
2 & 2 & 2 & 1 & 1 
\\
 \overline{1} & \overline{1} & \overline{1} & \overline{1} & \overline{0} 
\end{array}\right]$} &
{\tiny $\left[\begin{array}{c}
4 
\\
 \overline{0} 
\end{array}\right]$} &
$1$
\\\midrule
\midrule
$\gorno$ & 
$\frac{\CC[T_1,T_2,T_3,T_4,T_5]}{\bangle{T_{1} T_{2}+T_{3} T_{4}+T_{5}^{3}}}$ &
$\ZZ \times \ZZ_4$ &
{\tiny $\left[\begin{array}{ccccc}
1 & 2 & 2 & 1 & 1 
\\
 \overline{1} & \overline{3} & \overline{3} & \overline{1} & \overline{0} 
\end{array}\right]$} &
{\tiny $\left[\begin{array}{c}
4 
\\
 \overline{0} 
\end{array}\right]$} &
$2$
\\\midrule
$\gorno$ & 
$\frac{\CC[T_1,T_2,T_3,T_4,T_5]}{\bangle{T_{1}^{2}+T_{2} T_{3}+T_{4}^{2} T_{5}^{2}}}$ &
$\ZZ \times \ZZ_2$ &
{\tiny $\left[\begin{array}{ccccc}
2 & 2 & 2 & 1 & 1 
\\
 \overline{1} & \overline{1} & \overline{1} & \overline{0} & \overline{0} 
\end{array}\right]$} &
{\tiny $\left[\begin{array}{c}
4 
\\
 \overline{1} 
\end{array}\right]$} &
$2$
\\\midrule
$\gorno$ & 
$\frac{\CC[T_1,T_2,T_3,T_4,T_5]}{\bangle{T_{1}^{2}+T_{2} T_{3}+T_{4}^{2} T_{5}^{2}}}$ &
$\ZZ \times \ZZ_4$ &
{\tiny $\left[\begin{array}{ccccc}
2 & 2 & 2 & 1 & 1 
\\
 \overline{3} & \overline{3} & \overline{3} & \overline{1} & \overline{0} 
\end{array}\right]$} &
{\tiny $\left[\begin{array}{c}
4 
\\
 \overline{0} 
\end{array}\right]$} &
$2$
\\\midrule
$\gorno$ & 
$\frac{\CC[T_1,T_2,T_3,T_4,T_5]}{\bangle{T_{1}^{2}+T_{2} T_{3}+T_{4}^{3} T_{5}^{5}}}$ &
$\ZZ$ &
{\tiny $\left[\begin{array}{ccccc}
4 & 4 & 4 & 1 & 1 
\end{array}\right]$} &
{\tiny $\left[\begin{array}{c}
6 
\end{array}\right]$} &
$2$
\\\midrule

$\gorno$ & 
$\frac{\CC[T_1,T_2,T_3,T_4,T_5]}{\bangle{T_{1}^{2}+T_{2} T_{3}+T_{4}^{3} T_{5}^{7}}}$ &
$\ZZ$ &
{\tiny $\left[\begin{array}{ccccc}
8 & 8 & 8 & 3 & 1 
\end{array}\right]$} &
{\tiny $\left[\begin{array}{c}
12 
\end{array}\right]$} &
$2$
\\\midrule
$\gorno$ & 
$\frac{\CC[T_1,T_2,T_3,T_4,T_5]}{\bangle{T_{1}^{2}+T_{2} T_{3}+T_{4} T_{5}^{3}}}$ &
$\ZZ$ &
{\tiny $\left[\begin{array}{ccccc}
4 & 4 & 4 & 5 & 1 
\end{array}\right]$} &
{\tiny $\left[\begin{array}{c}
10 
\end{array}\right]$} &
$2$
\\\midrule
$\gorno$ & 
$\frac{\CC[T_1,T_2,T_3,T_4,T_5]}{\bangle{T_{1}^{2}+T_{2} T_{3}+T_{4} T_{5}^{3}}}$ &
$\ZZ \times \ZZ_2$ &
{\tiny $\left[\begin{array}{ccccc}
2 & 2 & 2 & 1 & 1 
\\
 \overline{1} & \overline{1} & \overline{1} & \overline{0} & \overline{0} 
\end{array}\right]$} &
{\tiny $\left[\begin{array}{c}
4 
\\
 \overline{1} 
\end{array}\right]$} &
$2$
\\\midrule
$\gorno$ & 
$\frac{\CC[T_1,T_2,T_3,T_4,T_5]}{\bangle{T_{1}^{2}+T_{2} T_{3}+T_{4} T_{5}^{5}}}$ &
$\ZZ$ &
{\tiny $\left[\begin{array}{ccccc}
8 & 8 & 8 & 1 & 3 
\end{array}\right]$} &
{\tiny $\left[\begin{array}{c}
12 
\end{array}\right]$} &
$2$
\\\midrule
$\gorno$ & 
$\frac{\CC[T_1,T_2,T_3,T_4,T_5]}{\bangle{T_{1}^{2}+T_{2} T_{3}+T_{4} T_{5}^{7}}}$ &
$\ZZ \times \ZZ_3$ &
{\tiny $\left[\begin{array}{ccccc}
4 & 4 & 4 & 1 & 1 
\\
 \overline{2} & \overline{2} & \overline{2} & \overline{1} & \overline{0} 
\end{array}\right]$} &
{\tiny $\left[\begin{array}{c}
6 
\\
 \overline{0} 
\end{array}\right]$} &
$2$
\\\midrule
$\gorno$ & 
$\frac{\CC[T_1,T_2,T_3,T_4,T_5]}{\bangle{T_{1}^{2}+T_{2} T_{3}+T_{4}^{6}}}$ &
$\ZZ \times \ZZ_2$ &
{\tiny $\left[\begin{array}{ccccc}
3 & 3 & 3 & 1 & 2 
\\
 \overline{1} & \overline{1} & \overline{1} & \overline{0} & \overline{1} 
\end{array}\right]$} &
{\tiny $\left[\begin{array}{c}
6 
\\
 \overline{0} 
\end{array}\right]$} &
$2$
\\\midrule
\midrule
$\gorno$ & 
$\frac{\CC[T_1,T_2,T_3,T_4,T_5]}{\bangle{T_{1} T_{2}+T_{3} T_{4}+T_{5}^{5}}}$ &
$\ZZ \times \ZZ_3$ &
{\tiny $\left[\begin{array}{ccccc}
2 & 3 & 3 & 2 & 1 
\\
 \overline{1} & \overline{2} & \overline{2} & \overline{1} & \overline{0} 
\end{array}\right]$} &
{\tiny $\left[\begin{array}{c}
6 
\\
 \overline{0} 
\end{array}\right]$} &
$3$
\\\midrule
$\gorno$ & 
$\frac{\CC[T_1,T_2,T_3,T_4,T_5]}{\bangle{T_{1} T_{2}+T_{3} T_{4}+T_{5}^{4}}}$ &
$\ZZ \times \ZZ_5$ &
{\tiny $\left[\begin{array}{ccccc}
1 & 3 & 3 & 1 & 1 
\\
 \overline{1} & \overline{4} & \overline{4} & \overline{1} & \overline{0} 
\end{array}\right]$} &
{\tiny $\left[\begin{array}{c}
5 
\\
 \overline{0} 
\end{array}\right]$} &
$3$
\\\midrule
$\gorno$ & 
$\frac{\CC[T_1,T_2,T_3,T_4,T_5]}{\bangle{T_{1} T_{2}+T_{3} T_{4}+T_{5}^{2}}}$ &
$\ZZ \times \ZZ_3$ &
{\tiny $\left[\begin{array}{ccccc}
1 & 3 & 3 & 1 & 2 
\\
 \overline{0} & \overline{1} & \overline{1} & \overline{0} & \overline{2} 
\end{array}\right]$} &
{\tiny $\left[\begin{array}{c}
6 
\\
 \overline{0} 
\end{array}\right]$} &
$3$
\\\midrule
$\gorno$ & 
$\frac{\CC[T_1,T_2,T_3,T_4,T_5]}{\bangle{T_{1} T_{2}+T_{3} T_{4}+T_{5}^{2}}}$ &
$\ZZ \times \ZZ_3$ &
{\tiny $\left[\begin{array}{ccccc}
3 & 1 & 1 & 3 & 2 
\\
 \overline{2} & \overline{0} & \overline{0} & \overline{2} & \overline{1} 
\end{array}\right]$} &
{\tiny $\left[\begin{array}{c}
6 
\\
 \overline{0} 
\end{array}\right]$} &
$3$
\\\midrule
$\gorno$ & 
$\frac{\CC[T_1,T_2,T_3,T_4,T_5]}{\bangle{T_{1} T_{2}+T_{3} T_{4}+T_{5}^{2}}}$ &
$\ZZ \times \ZZ_9$ &
{\tiny $\left[\begin{array}{ccccc}
1 & 1 & 1 & 1 & 1 
\\
 \overline{1} & \overline{8} & \overline{8} & \overline{1} & \overline{0} 
\end{array}\right]$} &
{\tiny $\left[\begin{array}{c}
3 
\\
 \overline{0} 
\end{array}\right]$} &
$3$
\\\midrule
$\gorno$ & 
$\frac{\CC[T_1,T_2,T_3,T_4,T_5]}{\bangle{T_{1}^{2}+T_{2} T_{3}+T_{4}^{2} T_{5}^{7}}}$ &
$\ZZ$ &
{\tiny $\left[\begin{array}{ccccc}
15 & 15 & 15 & 1 & 4 
\end{array}\right]$} &
{\tiny $\left[\begin{array}{c}
20 
\end{array}\right]$} &
$3$
\\\midrule
$\gorno$ & 
$\frac{\CC[T_1,T_2,T_3,T_4,T_5]}{\bangle{T_{1}^{2}+T_{2} T_{3}+T_{4}^{2} T_{5}^{8}}}$ &
$\ZZ \times \ZZ_2$ &
{\tiny $\left[\begin{array}{ccccc}
9 & 9 & 9 & 1 & 2 
\\
 \overline{1} & \overline{1} & \overline{1} & \overline{0} & \overline{1} 
\end{array}\right]$} &
{\tiny $\left[\begin{array}{c}
12 
\\
 \overline{0} 
\end{array}\right]$} &
$3$
\\\midrule
$\gorno$ & 
$\frac{\CC[T_1,T_2,T_3,T_4,T_5]}{\bangle{T_{1}^{2}+T_{2} T_{3}+T_{4}^{2} T_{5}^{10}}}$ &
$\ZZ \times \ZZ_4$ &
{\tiny $\left[\begin{array}{ccccc}
6 & 6 & 6 & 1 & 1 
\\
 \overline{3} & \overline{3} & \overline{3} & \overline{1} & \overline{0} 
\end{array}\right]$} &
{\tiny $\left[\begin{array}{c}
8 
\\
 \overline{0} 
\end{array}\right]$} &
$3$
\\\midrule
$\gorno$ & 
$\frac{\CC[T_1,T_2,T_3,T_4,T_5]}{\bangle{T_{1}^{2}+T_{2} T_{3}+T_{4}^{3} T_{5}^{9}}}$ &
$\ZZ$ &
{\tiny $\left[\begin{array}{ccccc}
6 & 6 & 6 & 1 & 1 
\end{array}\right]$} &
{\tiny $\left[\begin{array}{c}
8 
\end{array}\right]$} &
$3$
\\\midrule
$\gorno$ & 
$\frac{\CC[T_1,T_2,T_3,T_4,T_5]}{\bangle{T_{1}^{2}+T_{2} T_{3}+T_{4}^{3} T_{5}^{12}}}$ &
$\ZZ$ &
{\tiny $\left[\begin{array}{ccccc}
9 & 9 & 9 & 2 & 1 
\end{array}\right]$} &
{\tiny $\left[\begin{array}{c}
12 
\end{array}\right]$} &
$3$
\\\midrule
$\gorno$ & 
$\frac{\CC[T_1,T_2,T_3,T_4,T_5]}{\bangle{T_{1}^{2}+T_{2} T_{3}+T_{4}^{3} T_{5}^{18}}}$ &
$\ZZ$ &
{\tiny $\left[\begin{array}{ccccc}
15 & 15 & 15 & 4 & 1 
\end{array}\right]$} &
{\tiny $\left[\begin{array}{c}
20 
\end{array}\right]$} &
$3$
\\\midrule
$\gorno$ & 
$\frac{\CC[T_1,T_2,T_3,T_4,T_5]}{\bangle{T_{1}^{2}+T_{2} T_{3}+T_{4}^{3} T_{5}^{3}}}$ &
$\ZZ$ &
{\tiny $\left[\begin{array}{ccccc}
3 & 3 & 3 & 1 & 1 
\end{array}\right]$} &
{\tiny $\left[\begin{array}{c}
5 
\end{array}\right]$} &
$3$
\\\midrule
$\gorno$ & 
$\frac{\CC[T_1,T_2,T_3,T_4,T_5]}{\bangle{T_{1}^{2}+T_{2} T_{3}+T_{4}^{3} T_{5}^{3}}}$ &
$\ZZ$ &
{\tiny $\left[\begin{array}{ccccc}
9 & 9 & 9 & 1 & 5 
\end{array}\right]$} &
{\tiny $\left[\begin{array}{c}
15 
\end{array}\right]$} &
$3$
\\\midrule
$\gorno$ & 
$\frac{\CC[T_1,T_2,T_3,T_4,T_5]}{\bangle{T_{1}^{2}+T_{2} T_{3}+T_{4}^{3} T_{5}^{3}}}$ &
$\ZZ \times \ZZ_5$ &
{\tiny $\left[\begin{array}{ccccc}
3 & 3 & 3 & 1 & 1 
\\
 \overline{2} & \overline{2} & \overline{2} & \overline{3} & \overline{0} 
\end{array}\right]$} &
{\tiny $\left[\begin{array}{c}
5 
\\
 \overline{0} 
\end{array}\right]$} &
$3$
\\\midrule
$\gorno$ & 
$\frac{\CC[T_1,T_2,T_3,T_4,T_5]}{\bangle{T_{1}^{2}+T_{2} T_{3}+T_{4}^{4} T_{5}^{7}}}$ &
$\ZZ$ &
{\tiny $\left[\begin{array}{ccccc}
9 & 9 & 9 & 1 & 2 
\end{array}\right]$} &
{\tiny $\left[\begin{array}{c}
12 
\end{array}\right]$} &
$3$
\\\midrule
$\gorno$ & 
$\frac{\CC[T_1,T_2,T_3,T_4,T_5]}{\bangle{T_{1}^{2}+T_{2} T_{3}+T_{4}^{4} T_{5}^{8}}}$ &
$\ZZ \times \ZZ_2$ &
{\tiny $\left[\begin{array}{ccccc}
6 & 6 & 6 & 1 & 1 
\\
 \overline{1} & \overline{1} & \overline{1} & \overline{1} & \overline{0} 
\end{array}\right]$} &
{\tiny $\left[\begin{array}{c}
8 
\\
 \overline{0} 
\end{array}\right]$} &
$3$
\\\midrule
$\gorno$ & 
$\frac{\CC[T_1,T_2,T_3,T_4,T_5]}{\bangle{T_{1}^{2}+T_{2} T_{3}+T_{4}^{4} T_{5}^{10}}}$ &
$\ZZ \times \ZZ_2$ &
{\tiny $\left[\begin{array}{ccccc}
9 & 9 & 9 & 2 & 1 
\\
 \overline{1} & \overline{1} & \overline{1} & \overline{1} & \overline{0} 
\end{array}\right]$} &
{\tiny $\left[\begin{array}{c}
12 
\\
 \overline{0} 
\end{array}\right]$} &
$3$
\\\midrule
$\gorno$ & 
$\frac{\CC[T_1,T_2,T_3,T_4,T_5]}{\bangle{T_{1}^{2}+T_{2} T_{3}+T_{4}^{5} T_{5}^{7}}}$ &
$\ZZ$ &
{\tiny $\left[\begin{array}{ccccc}
6 & 6 & 6 & 1 & 1 
\end{array}\right]$} &
{\tiny $\left[\begin{array}{c}
8 
\end{array}\right]$} &
$3$
\\\midrule
$\gorno$ & 
$\frac{\CC[T_1,T_2,T_3,T_4,T_5]}{\bangle{T_{1}^{2}+T_{2} T_{3}+T_{4}^{5} T_{5}^{8}}}$ &
$\ZZ$ &
{\tiny $\left[\begin{array}{ccccc}
9 & 9 & 9 & 2 & 1 
\end{array}\right]$} &
{\tiny $\left[\begin{array}{c}
12 
\end{array}\right]$} &
$3$
\\\midrule
$\gorno$ & 
$\frac{\CC[T_1,T_2,T_3,T_4,T_5]}{\bangle{T_{1}^{2}+T_{2} T_{3}+T_{4}^{5} T_{5}^{10}}}$ &
$\ZZ$ &
{\tiny $\left[\begin{array}{ccccc}
15 & 15 & 15 & 4 & 1 
\end{array}\right]$} &
{\tiny $\left[\begin{array}{c}
20 
\end{array}\right]$} &
$3$
\\\midrule
$\gorno$ & 
$\frac{\CC[T_1,T_2,T_3,T_4,T_5]}{\bangle{T_{1}^{2}+T_{2} T_{3}+T_{4} T_{5}^{4}}}$ &
$\ZZ \times \ZZ_3$ &
{\tiny $\left[\begin{array}{ccccc}
3 & 3 & 3 & 2 & 1 
\\
 \overline{2} & \overline{2} & \overline{2} & \overline{1} & \overline{0} 
\end{array}\right]$} &
{\tiny $\left[\begin{array}{c}
6 
\\
 \overline{0} 
\end{array}\right]$} &
$3$
\\\midrule
$\gorno$ & 
$\frac{\CC[T_1,T_2,T_3,T_4,T_5]}{\bangle{T_{1}^{2}+T_{2} T_{3}+T_{4} T_{5}^{2}}}$ &
$\ZZ$ &
{\tiny $\left[\begin{array}{ccccc}
3 & 3 & 3 & 4 & 1 
\end{array}\right]$} &
{\tiny $\left[\begin{array}{c}
8 
\end{array}\right]$} &
$3$
\\\midrule
$\gorno$ & 
$\frac{\CC[T_1,T_2,T_3,T_4,T_5]}{\bangle{T_{1}^{2}+T_{2} T_{3}+T_{4} T_{5}^{5}}}$ &
$\ZZ$ &
{\tiny $\left[\begin{array}{ccccc}
6 & 6 & 6 & 7 & 1 
\end{array}\right]$} &
{\tiny $\left[\begin{array}{c}
14 
\end{array}\right]$} &
$3$
\\\midrule
$\gorno$ & 
$\frac{\CC[T_1,T_2,T_3,T_4,T_5]}{\bangle{T_{1}^{2}+T_{2} T_{3}+T_{4}^{5}}}$ &
$\ZZ$ &
{\tiny $\left[\begin{array}{ccccc}
5 & 5 & 5 & 2 & 3 
\end{array}\right]$} &
{\tiny $\left[\begin{array}{c}
10 
\end{array}\right]$} &
$3$
\\\midrule
$\gorno$ & 
$\frac{\CC[T_1,T_2,T_3,T_4,T_5]}{\bangle{T_{1}^{2}+T_{2} T_{3}+T_{4}^{8}}}$ &
$\ZZ \times \ZZ_2$ &
{\tiny $\left[\begin{array}{ccccc}
4 & 4 & 4 & 1 & 3 
\\
 \overline{1} & \overline{1} & \overline{1} & \overline{0} & \overline{1} 
\end{array}\right]$} &
{\tiny $\left[\begin{array}{c}
8 
\\
 \overline{0} 
\end{array}\right]$} &
$3$
\\\midrule
$\gorno$ & 
$\frac{\CC[T_1,T_2,T_3,T_4,T_5]}{\bangle{T_{1}^{2}+T_{2} T_{3}+T_{4}^{9}}}$ &
$\ZZ$ &
{\tiny $\left[\begin{array}{ccccc}
9 & 9 & 9 & 2 & 1 
\end{array}\right]$} &
{\tiny $\left[\begin{array}{c}
12 
\end{array}\right]$} &
$3$
\\\midrule
$\gorno$ & 
$\frac{\CC[T_1,T_2,T_3,T_4,T_5]}{\bangle{T_{1}^{2}+T_{2} T_{3}+T_{4}^{12}}}$ &
$\ZZ \times \ZZ_2$ &
{\tiny $\left[\begin{array}{ccccc}
6 & 6 & 6 & 1 & 1 
\\
 \overline{1} & \overline{1} & \overline{1} & \overline{1} & \overline{0} 
\end{array}\right]$} &
{\tiny $\left[\begin{array}{c}
8 
\\
 \overline{0} 
\end{array}\right]$} &
$3$
\\\midrule
$\gorno$ & 
$\frac{\CC[T_1,T_2,T_3,T_4,T_5]}{\bangle{T_{1}^{2}+T_{2} T_{3}+T_{4}^{18}}}$ &
$\ZZ \times \ZZ_2$ &
{\tiny $\left[\begin{array}{ccccc}
9 & 9 & 9 & 1 & 2 
\\
 \overline{1} & \overline{1} & \overline{1} & \overline{0} & \overline{1} 
\end{array}\right]$} &
{\tiny $\left[\begin{array}{c}
12 
\\
 \overline{0} 
\end{array}\right]$} &
$3$
\\
\bottomrule
\end{longtable}
}
\end{corollary}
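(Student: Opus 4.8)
The plan is to derive the corollary by feeding the values $\iota_X \in \{1,2,3\}$ into the quantitative results obtained earlier for the five series of varieties from \cite{ahhl14}. Recall that, by \cite{ahhl14}, the three-dimensional $\QQ$-factorial Fano varieties of Picard number one with reductive automorphism group whose maximal torus has dimension two are organized into five families, distinguished by the shape of the (trinomial) relation in the Cox ring and the structure of its $\Cl(X)$-grading; these are precisely the families treated in Propositions~\ref{prop:set1}, \ref{prop:set2}, \ref{prop:set3}, \ref{prop:set4} and~\ref{prop:set5}. Each of those propositions bounds, for a fixed Gorenstein index, the exponents occurring in the relation together with the entries of the degree matrix $[w_1,\dots,w_r]$. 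Specializing to $\iota_X \le 3$ therefore produces, series by series, an explicit and \emph{finite} list of candidate tuples of defining data $(\RRR(X),\Cl(X),[w_1,\dots,w_r],-\KKK_X)$.

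For each candidate on this list I would then construct the associated variety $X \subseteq Z$ inside its ambient toric variety, write down its anticanonical complex $\AAA_X$, and invoke Theorem~\ref{thm:intro}: the Gorenstein index is the least common multiple of the lattice distances $d(0,F)$ taken over the maximal cells $F \in \partial\AAA_X$. This reduces the verification of $\iota_X$ to evaluating finitely many lattice distances on explicitly presented polyhedral cells, a purely combinatorial computation that can be carried out by computer. One keeps exactly those candidates for which this lcm is $\le 3$ and discards the rest; this step simultaneously confirms the value of $\iota_X$ recorded in the last column of the table.

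It remains to eliminate redundancies among the surviving candidates. By Remark~\ref{rem:CoxRingAndAmpleClass}, a variety of the type under consideration is determined up to isomorphism by its $\Cl(X)$-graded Cox ring together with its anticanonical class. Two tuples of defining data thus describe isomorphic varieties precisely when they are related by an admissible transformation of the defining matrices -- permutations and rescalings of the variables compatible with the relation, automorphisms of $\Cl(X)$, and admissible row operations on the degree matrix -- carrying one anticanonical class to the other. Running this equivalence test over the finite candidate list and choosing a normal form in each class yields exactly the thirty-odd entries displayed in the table.

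The main obstacle is ensuring that the resulting list is both complete and irredundant. On the completeness side, one must check that the bounds in Propositions~\ref{prop:set1}--\ref{prop:set5} really cut the candidate set down to a finite one once $\iota_X \le 3$ is imposed -- in particular that the free part of the $\Cl(X)$-grading and the exponents in the relation are all controlled, leaving no unbounded parameter. On the irredundancy side, the delicate point is the bookkeeping of anticanonical classes: the same Cox ring can carry several inequivalent admissible gradings and several inequivalent choices of $-\KKK_X$ within the allowed range, so the isomorphy test must be implemented carefully enough that no two rows of the table coincide as varieties and no admissible combination is overlooked. Once these two finiteness-and-normalization issues are settled, the enumeration and the combinatorial Gorenstein-index check of Theorem~\ref{thm:intro} complete the proof.
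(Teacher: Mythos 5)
Your proposal matches the paper's (implicit) argument for this corollary: the paper likewise derives the table by combining the classification of \cite{ahhl14} with the finiteness bounds of Propositions \ref{prop:set1}, \ref{prop:set2}, \ref{prop:set3}, \ref{prop:set4} and \ref{prop:set5} specialized to $\iota_X\le 3$, a computer-aided evaluation of the lattice distances on $\partial\mathcal{A}_X$ via Proposition \ref{prop:mainProp}, and normalization up to isomorphy through the $\Cl(X)$-graded Cox ring together with the anticanonical class as in Remark \ref{rem:CoxRingAndAmpleClass}. The only refinement worth noting is that the classification is not quite ``five families'': it consists of the five Settings \ref{set:1}--\ref{set:5} \emph{plus} the single exceptional variety No.~\ref{arzhantsev-et-al-Prop-8-6-iv}, which lies outside all five settings but, having Gorenstein index one, is immediately added to the list.
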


\tableofcontents

\section{Background on General Arrangement Varieties}
We assume the reader to be familiar with the foundations of toric geometry; see \cites{cls, ful} for introductory texts. In this section we recall the necessary facts and notions on \emph{general arrangement varieties}. 
This class of varieties has been introduced in \cite{wrobel:ta_hc} and can be obtained by using the constructive description of varieties with torus action provided there: The construction is based on a result of \cite{hs} relating the \emph{Cox ring}
$$
\mathcal{R}(X) := \bigoplus_{[D] \in \mathrm{Cl}(X)} \Gamma(X, \mathcal{O}_X(D))
$$
of a variety $X$ with torus action to that of a suitable rational quotient $X \dashrightarrow Y$, the so-called \emph{maximal orbit quotient}. It provides us with the Cox ring and an associated embedding of $X$ into a toric variety $Z_X$. 
Specializing the procedure to the case that $Y$ is the projective or the affine line, one retrieves the Cox ring based approach to rational varieties with torus action of complexity one developed in \cites{hh, hhs, hw_cpl1}. The class of general arrangement varieties introduced in \cite{wrobel:ta_hc} can then be seen as a controlled step leaving the case of complexity one: These are varieties $X$ with torus action and $Y = \PP^n$ such that the critical values of the maximal orbit quotient $X \dashrightarrow Y$ form a general hyperplane arrangement.

We briefly recall the construction of graded rings $R(A,P)$ that are defined by a pair of matrices and which turn out to be the Cox rings of general arrangement varieties; compare \cite{wrobel:ta_hc}:
\begin{construction}
\label{constr:R(A,P_0)}
Fix integers $r \ge c > 0$
and $n_0, \ldots, n_r > 0$ 
as well as $m \ge 0$. 
Set $n := n_0 + \ldots + n_r$.
For every $i = 0, \ldots, r$ fix a tuple $l_i \in \ZZ_{>0}^{n_i}$ and define a monomial 
$$
T_i^{l_i}
\ := \ 
T_{i1}^{l_{i1}} \cdots T_{in_i}^{l_{in_i}}
\ \in \ 
\KK[T_{ij},S_k; \ 0 \leq i \leq r, \ 1 \leq j \leq n_i, \ 1 \leq k \leq m].
$$
We will also write $\KK[T_{ij}, S_k]$ for the above polynomial ring.
Let $A :=(a_0, \ldots a_r)$ be a a $(c+1) \times (r+1)$ matrix over $\KK$ 
such that any $c+1$ of its columns 
$a_0, \ldots, a_r$ are linearly independent.
For every $t = 1, \ldots, r-c$, 
we obtain a polynomial
$$ 
g_t 
\ := \
\det
\left[
\begin{array}{cccc}
a_0 & \ldots & a_c & a_{c+t}
\\
T_0^{l_0} & \ldots & T_c^{l_{c}} & T_{c+t}^{l_{c+t}}
\end{array}
\right]
\ \in \ 
\KK[T_{ij},S_k].
$$
In the next step, we construct a grading on the factor ring
$$
\KK[T_{ij}, S_k]/\bangle{g_1, \ldots, g_{r-c}}.
$$
We build up 
an integral $(r+s) \times (n+m)$-matrix $P$ from an
$r \times (n+m)$-matrix matrix $P_0$
built from the tuples of positive integers 
$l_i$, where $i = 0, \ldots, r$ and a $s \times (n+m)$-matrix $D$
as follows
$$
P:=
\left[
\begin{array}{c}
P_0\\
\hline
D
\end{array}
\right]
\ := \
\left[
\begin{array}{ccccccc}
-l_{0} & l_{1} &  & 0 & 0  &  \ldots & 0
\\
\vdots & \vdots & \ddots & \vdots & \vdots &  & \vdots
\\
-l_{0} & 0 &  & l_{r} & 0  &  \ldots & 0\\
\hline
&&&&&&
\\
&&&D&&&
\end{array}
\right],
$$
whereby we require the columns of the matrix $P$ to be pairwise different, primitive and generate $\QQ^{r+s}$ as a vector space.

Now, let $e_{ij} \in \ZZ^{n}$ 
and $e_k \in \ZZ^{m}$ denote the 
canonical basis vectors
and consider the projection 
$$
Q \colon \ZZ^{n+m} 
\ \to \ 
K := \ZZ^{n+m} / \im(P^*)
$$ 
onto the factor group
by the row lattice of $P$.
Then the 
\emph{$K$-graded $\KK$-algebra
associated with $(A,P)$} 
is defined as
$$ 
R(A,P)
\ := \ 
\KK[T_{ij},S_k] / \bangle{g_1,\ldots,g_{r-c}},
$$
$$
\deg(T_{ij}) :=  Q(e_{ij}),
\qquad
\deg(S_k) :=  Q(e_k).
$$
We note that the rings $R(A,P)$ can be directly read off the matrices $A$ and $P$. They are integral normal complete intersections.
\end{construction}

From a ring $R(A,P)$ as above, we obtain general arrangement varieties $X$ together with an embedding $X \subseteq Z$ into a toric variety $Z$ via the following construction:

\begin{construction}\label{constr:XAPSigma}
Let $R(A,P)$ be as above. The generators $T_{ij},S_k$ of $R(A,P)$ give rise to an embedding
\begin{center}
\begin{tikzcd}
\bar X := \Spec(R(A,P))\arrow[r, hook]
&
\bar Z:=\KK^{n+m}. 
\end{tikzcd}
\end{center}
Fix any fan $\Sigma$ in $\QQ^{r+s}$ having the columns of $P$ as its primitive ray generators and denote by $Z$ the toric variety with defining fan $\Sigma$.
Consider the linear map $P\colon\QQ^{n+m}\rightarrow\QQ^{r+s}$ defined by $P$, set
$\hat\Sigma:=\{\sigma\preceq\gamma;\  P(\sigma)\in\Sigma\}$, where $\gamma \subseteq \QQ^{n+m}$ denotes the positive orthant, and denote by $\hat Z$ the corresponding toric variety. Then we obtain a commutative diagram
\begin{center}
\begin{tikzcd}
\bar X\cap \hat Z \arrow[r, hook]\arrow[d,"p"]
&\hat Z\arrow[d,"p"]
\\
X(A,P,\Sigma)\arrow[r, hook]
&
Z
\end{tikzcd}
\end{center}
where $p$ denotes the toric morphism corresponding to the linear map $P$
and $X:=X(A,P,\Sigma)$
is the closure of $p(\bar X\cap \TT^{n+m})$
inside $Z$.
By construction, the variety $X$ is invariant under the subtorus action $\TT^s\subseteq\TT^{r+s}$ of the acting torus of $Z$.
\end{construction}

The varieties $X:=X(A,P,\Sigma)\subseteq Z$
are normal varieties with
dimension, invertible functions, divisor class group and Cox ring given in terms of their defining data by:
$$
\dim(X) = s + c,
\qquad
\Gamma(X,\mathcal{O}^*) = \KK^*,
\qquad
\Cl(X) = K,
\qquad
\mathcal{R}(X) = R(A,P). 
$$
The torus action of $\TT^s$ on $X$ is effective and of complexity $c$, i.e.\ the general torus orbit is of codimension $c$.

\begin{definition}
Let $X:=X(A,P,\Sigma) \subseteq Z$ arise from Construction \ref{constr:XAPSigma}. Then we call $X \subseteq Z$ an \emph{explicit general arrangement variety}. Moreover we call any $\TT$-variety that is equivariantly isomorphic to an explicit general arrangement variety a \emph{general arrangement variety}.
\end{definition}

\begin{remark}\label{rem:CoxRingAndAmpleClass}
Let $R(A,P)$ be as above and assume that the columns of $P$ generate $\QQ^{r+s}$ as a cone. Let $\gamma$ denote the positive orthant $\QQ_{\geq0}^{n+m}$. We define a polyhedral cone 
$$
\mathrm{Mov}(R(A,P)) :=  \bigcap_{\gamma_0\preceq \gamma \text{ facet}} Q(\gamma_0) \subseteq K_\QQ.
$$
Then any projective explicit general arrangement variety is of the form $X(A, P, \Sigma(u))$, where $\Sigma(u)$ is constructed as follows: Let $u \in \mathrm{Mov}(R(A,P))^\circ$ and set
$$
\Sigma(u) := \left\{P(\gamma_0^*); \ \gamma_0 \preceq \gamma, \ u \in Q(\gamma_0)^\circ\right\}, \quad \text{where } \gamma_0^* := \mathrm{cone}(e_i; \ e_i \notin \gamma_0).
$$
In particular, up to isomorphy, a projective general arrangement variety $X$ can be regained from its $\mathrm{Cl}(X)$-graded Cox ring $\mathcal{R}(X)$ and an ample class $u \in \mathrm{Cl}(X)$
in the above way. Moreover, if $X$ is Fano, we may choose $u = - \mathcal{K}_X.$
\end{remark}

\begin{remark}
    Let $X:=X(A,P,\Sigma) \subseteq Z$ be an explicit general arrangement variety. We note that in Construction \ref{constr:XAPSigma} we may successively remove all maximal cones $\sigma \in \Sigma$ whose corresponding orbit does not intersect $X$, that is,
    $$X \cap \TT^{r+s} \cdot z_\sigma = \emptyset,$$
    where $z_\sigma$ denotes the common limit point for $t \rightarrow 0$ of all one-parameter subgroups $t \mapsto (t^{v_1}, \ldots, t^{v_{r+s}})$ of the acting torus $\TT^{r+s}$ on $Z$ with $v \in \ZZ^{r+s}$ taken from the relative interior $\sigma^\circ \subseteq \sigma$.
    We end up with a minimal fan $\Sigma$ still defining the same general arrangement variety $X$.
    We call the toric variety corresponding to this minimal fan the \emph{minimal ambient toric variety} of $X$ and denote it with $Z_X$.
\end{remark}

We end this chapter by a close investigation of the structure of the fan $\Sigma$ of the minimal ambient toric variety $Z_X$ of a general arrangement variety $X$. 

Let us briefly recall the basic notions on tropical varieties.
Let $Z$ be a toric variety with acting torus $\TT$.
For a closed subvariety $X\subseteq Z$ intersecting the torus $\TT$ non trivially consider the vanishing ideal $I(X \cap \TT)$ in the Laurent polynomial ring $\OOO(T)$. For every $f \in I(X\cap \TT)$ let $|\Sigma(f)|$ denote the support of the codimension one skeleton of the normal quasifan of its Newton polytope.
Then the \emph{tropical variety $\trop(X)$ of} $X$ is defined as follows, 
see \cite[Def. 3.2.1]{MaclaganSturmfels}:
$$
\trop(X) := \bigcap_{f \in I(X \cap \TT)} |\Sigma(f)| \subseteq \QQ^{\dim(Z)}.
$$
The following result of Tevelev then gives a criterion on which orbits of the toric variety $Z$ are intersected by the embedded variety $X$ in terms of the tropical variety $\mathrm{trop}(X)$, see \cite{tev}:
\begin{remark}
Let $X\subseteq Z$ be a closed embedding.
Then $X$ intersects the torus orbit $T \cdot z_\sigma$ corresponding to the cone ${\sigma \in \Sigma}$ non-trivially if and only if the relative interior $\sigma^\circ$ intersects the tropical variety $\trop(X)$ non-trivially.
\end{remark}

Using this criterion, we obtain that the cones occurring in the fan corresponding to the minimal ambient toric variety of an explicit general arrangement variety $X \subseteq Z_X$
are as follows:

\begin{remark}\label{rem:tropAndMinimalAmbient}
Let $X(A, P, \Sigma)\subseteq Z_X$ be an explicit general arrangement variety of complexity $c$ and denote with $\Sigma_{\PP^r}$ the fan corresponding to the toric variety $\PP^r$. Then we have
$$
|\mathrm{trop}(X)| = |\Sigma^{\leq c}_{\PP_r}|\times\QQ^s,
\quad
\text{where}
\quad 
\Sigma_{\PP^r}^{\leq c}:=\left\{\sigma\in\Sigma_{\PP^r};\  \dim(\sigma)\leq c\right\}$$
We endow $\mathrm{trop}(X)$ with the following quasifan structure:
Denote by $e_1,\ldots,e_{r+s}$ the canonical basis of $\QQ^{r+s}$ and set $e_0:=-\sum_{i=1}^r e_i$.
For any subset $I\subseteq\{0,\ldots,r\}$  with $0 \leq |I| \leq c$ we set
$$\lambda_I:=\cone(e_i;\ i\in I) + \lin(e_{r+1},\ldots,
e_{r+s}).
$$ 
Then we have $\lambda_I\subseteq\trop(X)$ and these cones define quasifan structure on $\mathrm{trop}(X)$. More precisely we have
$$
\mathrm{trop}(X) = \Sigma^{\leq c}_{\PP_r}\times\QQ^s = \left\{\sigma \times \QQ^s; \ \sigma \in \Sigma^{\leq c}_{\PP_r}\right\} = \left\{\lambda_I; \ I \subseteq \left\{0, \ldots, r\right\}, \ 0 \leq |I| \leq c\right\}.
$$
The cones $\lambda_I$, where $1\leq |I| =: k$ are called the {\em $k$-leaves of $\trop(X)$}.
Moreover, we have the {\em lineality space of $\trop(X)$}: 
$$\lambda_\mathrm{lin}:= \lambda_\emptyset = \bigcap
 \lambda_I = \mathrm{lin}(e_{r+1}, \ldots, e_{r+s}).$$
Using this quasifan structure on $\mathrm{trop}(X)$, we can distinguish between two types of cones that occur in the defining fan $\Sigma$ of the minimal ambient toric variety $Z_X$ of $X$:
A cone $\sigma \in \Sigma$ is either a \emph{leaf cone}, that means, $\sigma \subseteq \lambda_I$ holds for a leaf $\lambda_i \in \mathrm{trop}(X)$, or $\sigma \in \Sigma$ is a \emph{big cone}, that means
$\sigma \cap \lambda_i^\circ \neq \emptyset$ 
holds for all $1$-leaves $\lambda_i$ of $\trop(X)$.
Moreover, we call a big cone \emph{elementary big}, if for every $0 \leq i \leq r$ there exists precisely one ray $\varrho_i$ of $\sigma$ with $\varrho_i \subseteq \lambda_i$.

\end{remark}

\begin{lemma}\label{lem:interiorIntersNotEmpty}
    Let $X:=X(A,P, \Sigma) \subseteq Z_X$ be an explicit general arrangement variety.
    Let $\sigma \in \Sigma$ be a cone with $\sigma \not \subseteq \lambda_\mathrm{lin}$. Then the following statements are equivalent:
    \begin{enumerate}
        \item 
        $\sigma$ is a big cone.
        \item 
        We have $\sigma^\circ \cap \lambda_\mathrm{lin} \neq \emptyset.$
    \end{enumerate}
\end{lemma}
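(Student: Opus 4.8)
The plan is to work directly with the quasifan description of $\trop(X)$ from Remark~\ref{rem:tropAndMinimalAmbient}, recalling that $\trop(X) = \{\lambda_I; \ I \subseteq \{0,\ldots,r\},\ 0 \le |I| \le c\}$ with $\lambda_\mathrm{lin} = \lambda_\emptyset = \lin(e_{r+1},\ldots,e_{r+s})$, and that each $\lambda_I = \cone(e_i; i \in I) + \lambda_\mathrm{lin}$. The key geometric observation is that the $1$-leaves $\lambda_i$ ($i = 0,\ldots,r$) all contain $\lambda_\mathrm{lin}$ as a face, and that $\lambda_\mathrm{lin}$ is precisely the common intersection $\bigcap_I \lambda_I$; in particular $\lambda_\mathrm{lin}$ is a face of every cone $\lambda_I$ of the quasifan, hence a face of every cone $\sigma \in \Sigma$ supported on $\trop(X)$ (since $\Sigma$ refines this quasifan structure).

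First I would prove the implication (ii)$\Rightarrow$(i). Suppose $v \in \sigma^\circ \cap \lambda_\mathrm{lin}$. Pick an arbitrary $1$-leaf $\lambda_i$ and choose a vector $w \in \lambda_i^\circ$; concretely one may take $w = e_i + v_0$ for a suitable $v_0 \in \lambda_\mathrm{lin}$, which lies in the relative interior of $\lambda_i = \cone(e_i) + \lambda_\mathrm{lin}$. Since $v \in \sigma^\circ$, for small $\varepsilon > 0$ the point $v + \varepsilon(w - v)$ still lies in $\sigma$ provided $w$ lies in the cone spanned by $\sigma$ together with $\lambda_\mathrm{lin}$; here I use that $\lambda_\mathrm{lin}$ is a face of $\sigma$ (hence $\sigma + \lambda_\mathrm{lin} = \sigma$ up to the lineality already contained, more precisely $v$ being interior lets us move a little in the $\lambda_\mathrm{lin}$-directions and then toward $w$). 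The cleaner way: since $\lambda_\mathrm{lin}$ is a face of $\sigma$ and $\sigma \not\subseteq \lambda_\mathrm{lin}$, there is a point $u \in \sigma^\circ$ lying in some leaf $\lambda_I^\circ$ with $I \ne \emptyset$; combining $u$ with the interior point $v \in \lambda_\mathrm{lin} \cap \sigma^\circ$ shows $\sigma^\circ$ meets $\lambda_i^\circ$ for the index/indices in $I$, but one still has to get all $1$-leaves. This is where the argument needs care, and I expect it to be the main obstacle: one must show that an interior point in $\lambda_\mathrm{lin}$ forces $\sigma$ to "spread into" every $1$-leaf simultaneously. I would argue that $\sigma^\circ \cap \lambda_\mathrm{lin} \neq \emptyset$ means the relative interior of $\sigma$ is not contained in any proper face of any $\lambda_I$ that avoids $\lambda_\mathrm{lin}$; since the only cones of the quasifan meeting $\lambda_\mathrm{lin}^\circ$ are those $\lambda_I$ that contain $\lambda_\mathrm{lin}$ with $\lambda_\mathrm{lin}$ as a face — and every such $\lambda_I$ sits inside the union of $1$-leaves in a way that, intersected with $\sigma$, still meets each $\lambda_i^\circ$ — one concludes $\sigma$ is big. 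Making this last sentence precise is the crux.

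For the converse (i)$\Rightarrow$(ii), suppose $\sigma$ is big, so $\sigma \cap \lambda_i^\circ \neq \emptyset$ for every $1$-leaf $\lambda_i$, $i = 0,\ldots,r$. Pick $v_i \in \sigma \cap \lambda_i^\circ$ for each $i$. Then $v_0 + v_1 + \cdots + v_r \in \sigma$ since $\sigma$ is convex. Now project to the $\QQ^r = \QQ^{r+s}/\lambda_\mathrm{lin}$ factor: the image of $v_i$ lies in the relative interior of the ray $\overline{\lambda_i} = \cone(e_i) \subseteq \QQ^r$, so the image of $v_0 + \cdots + v_r$ is $\sum_i c_i e_i$ with all $c_i > 0$, where $e_0 = -\sum_{j=1}^r e_j$; but $\sum_{i=0}^r c_i e_i = \sum_{j=1}^r (c_j - c_0) e_j$, and choosing the $v_i$ scaled so that $c_0 = c_1 = \cdots = c_r$ (possible by positive scaling within each relative-interior ray) makes this image zero. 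Hence $v_0 + \cdots + v_r \in \lambda_\mathrm{lin}$, and since it is a strictly positive combination of points of $\sigma$ not all lying in a common proper face (as their images span all rays of $\overline{\lambda}$), it lies in $\sigma^\circ$. Therefore $\sigma^\circ \cap \lambda_\mathrm{lin} \neq \emptyset$. The only subtlety is ensuring $v_0 + \cdots + v_r$ is genuinely in the relative interior of $\sigma$; this follows because any face of $\sigma$ containing it would, by the face property, contain each $v_i$, hence meet each $\lambda_i^\circ$, and a careful dimension count using that $\sigma \not\subseteq \lambda_\mathrm{lin}$ rules out this being a proper face.
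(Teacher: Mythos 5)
Your proposal has genuine gaps in both directions. For (ii)$\Rightarrow$(i), the premise you lean on --- that $\lambda_{\mathrm{lin}}$ is a face of every cone $\sigma\in\Sigma$ supported on $\trop(X)$ because ``$\Sigma$ refines the quasifan structure'' --- is false: the cones of $\Sigma$ are strongly convex, so the positive-dimensional linear subspace $\lambda_{\mathrm{lin}}$ is never a face of any of them, and $\Sigma$ does not refine the quasifan structure of $\trop(X)$ (the big cones are contained in no single leaf; only the refinement $\Sigma'=\Sigma\sqcap\trop(X)$ consists of leaf cones). You also concede that the remaining argument is ``the crux'' without supplying it. The paper disposes of this direction in one line using the dichotomy of Remark \ref{rem:tropAndMinimalAmbient}: every cone of $\Sigma$ is either big or a leaf cone, and a leaf cone $\sigma\subseteq\lambda_I$ whose relative interior meets $\lambda_{\mathrm{lin}}$, which \emph{is} a face of $\lambda_I$, must satisfy $\sigma\subseteq\lambda_{\mathrm{lin}}$, contradicting the hypothesis. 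So only (i)$\Rightarrow$(ii) carries content.

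For (i)$\Rightarrow$(ii) your balancing idea is the right one, but choosing a single point $v_i\in\sigma\cap\lambda_i^\circ$ per $1$-leaf does not force the balanced sum into $\sigma^\circ$: all the $v_i$ may lie in a common proper face $\tau\preceq\sigma$ that is itself a big cone, in which case the sum lies in $\tau\subseteq\partial\sigma$. This is not a hypothetical worry --- the elementary big cones $\sigma_i\cap\sigma_j$ in Settings \ref{set:1}--\ref{set:5} are exactly such big proper faces of the maximal big cones --- so no ``dimension count using $\sigma\not\subseteq\lambda_{\mathrm{lin}}$'' can close this gap. The paper's fix is to use a strictly positive combination of \emph{all} primitive ray generators of $\sigma$ (the columns $v_{ij}$, $v_k$ of $P$ lying in $\sigma$), which automatically lies in $\sigma^\circ$; grouping them by the leaf they belong to, setting $\alpha_i=\sum_{j\in J_i}l_{ij}>0$ and weighting the $i$-th group by $\alpha_0/\alpha_i$ makes the first $r$ coordinates cancel, landing the point in $\sigma^\circ\cap\lambda_{\mathrm{lin}}$. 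Your argument becomes correct precisely when you replace your arbitrary $v_i$ by these grouped ray generators.
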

\begin{proof}
    As each cone in $\Sigma_X$ is either a big cone or a leaf cone, we only need to show the implication (i) $\Rightarrow$ (ii).
    So let $\sigma$ be a big cone.
    For $0 \leq i \leq r$ we set
    $$
    J_i := \left\{j \in \left\{1, \ldots, n_i\right\}; \ v_{ij} \in \sigma \right\}
    \quad
    \text{and}
    \quad
    J:= \left\{k \in \left\{1, \ldots, m\right\}; \ v_k \in \sigma\right\}.
    $$
    As $\sigma$ is a big cone, none of the sets $J_i$ is empty.
    We obtain $\alpha_i := \sum_{j \in J_i} l_{ij} > 0$ for all $0 \leq i \leq r$ and conclude
    $$
    \sum_{i=0}^r \frac{\alpha_0}{\alpha_i} \sum_{j \in J_i} v_{ij} + \sum_{k \in J} v_k \in \sigma^\circ \cap \lambda_{\mathrm{lin}}.    
    $$
\end{proof}

\begin{proposition}\label{prop:fullDimInterMaxBigCone}
    Let $X:=X(A,P, \Sigma) \subseteq Z_X$ be an affine or complete explicit general arrangement variety and let $\sigma \in \Sigma$ be a maximal big cone, where we mean maximal in $\Sigma$ with respect to inclusion. Then we have
    $$
    \mathrm{dim}(\sigma \cap \lambda_\mathrm{lin}) = \mathrm{dim}(\lambda_\mathrm{lin}).
    $$
\end{proposition}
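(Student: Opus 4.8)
The plan is to pin down a single interior vector of $\sigma$ lying in the lineality space and then argue locally around it. Since $\sigma$ is big, it meets $\lambda_i^\circ$ for every $1$-leaf, so $\sigma\not\subseteq\lambda_\mathrm{lin}$, and Lemma~\ref{lem:interiorIntersNotEmpty} supplies a vector $v\in\sigma^\circ\cap\lambda_\mathrm{lin}$. As $\dim(\lambda_\mathrm{lin})=s$ is an upper bound for $\dim(\sigma\cap\lambda_\mathrm{lin})$, it suffices to show that $\sigma\cap\lambda_\mathrm{lin}$ contains an open neighbourhood of $v$ inside the linear subspace $\lambda_\mathrm{lin}$.

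First I would record the soft fact that, because $\sigma$ is maximal in $\Sigma$ and $v\in\sigma^\circ$, the cone $\sigma$ is the only cone of $\Sigma$ containing $v$: any $\tau\in\Sigma$ with $v\in\tau$ has $\tau\cap\sigma$ a common face meeting $\sigma^\circ$, hence $\tau\cap\sigma=\sigma$, hence $\sigma\preceq\tau$, hence $\tau=\sigma$ by maximality. Since $\Sigma$ is finite and its cones are closed, there is an open neighbourhood $U$ of $v$ with $U\cap|\Sigma|=U\cap\sigma$.

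The real work is to show $\lambda_\mathrm{lin}\subseteq|\Sigma|$. If $X$ is affine, then $Z_X$ is affine, so $\Sigma$ is the fan of faces of its single maximal cone; this cone contains all columns of $P$, which generate $\QQ^{r+s}$ as a vector space by Construction~\ref{constr:R(A,P_0)}, so it equals $\QQ^{r+s}$, whence $\sigma$ is full-dimensional and the claim is immediate from $v\in\sigma^\circ$. If $X$ is complete, then $X$ is the closure of $X\cap\TT^{r+s}$ in $Z_X$ by Construction~\ref{constr:XAPSigma}, so properness of $X$ forces $\trop(X)\subseteq|\Sigma|$ (the tropical compactification criterion, see \cite{tev}); combining with $\lambda_\mathrm{lin}\subseteq|\trop(X)|$ from Remark~\ref{rem:tropAndMinimalAmbient} gives $\lambda_\mathrm{lin}\subseteq|\Sigma|$, and then $U\cap\lambda_\mathrm{lin}=U\cap|\Sigma|\cap\lambda_\mathrm{lin}=U\cap\sigma\cap\lambda_\mathrm{lin}$ exhibits the desired neighbourhood of $v$ in $\sigma\cap\lambda_\mathrm{lin}$.

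I expect this last step, controlling the support $|\Sigma|$ of the minimal ambient fan, to be the main obstacle: it is precisely where the hypothesis ``affine or complete'' is used, and in the complete case it rests on the nontrivial input that a tropical compactification is proper exactly when the fan support contains the tropical variety. The remaining ingredients are elementary point-set topology of fans together with the already established Lemma~\ref{lem:interiorIntersNotEmpty}.
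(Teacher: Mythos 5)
Your argument is correct and follows essentially the same route as the paper: Lemma~\ref{lem:interiorIntersNotEmpty} supplies the interior point, completeness plus Tevelev's criterion gives $\lambda_\mathrm{lin}\subseteq|\trop(X)|\subseteq|\Sigma|$, and the face-plus-maximality argument identifies $\sigma$ as the only cone meeting a neighbourhood of that point — the paper merely phrases this last step as a covering of $\lambda_\mathrm{lin}$ by the full-dimensional intersections $\tau\cap\lambda_\mathrm{lin}$ rather than as a local statement. The only slip is your claim in the affine case that the single maximal cone \emph{equals} $\QQ^{r+s}$ (spanning as a vector space only yields full-dimensionality), but that weaker conclusion is all you actually use, so nothing breaks.
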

\begin{proof}
    If $X$ is affine, then $\Sigma$ consists of precisely one maximal big cone. As by construction the columns of $P$ generate $\QQ^{r+s}$ as a vector space, we conclude that $\sigma$ intersects $\lambda_{\mathrm{lin}}$ in full dimension.

    So assume $X$ is complete and let $\sigma \in \Sigma_X$ be a maximal big cone. Then due to Lemma \ref{lem:interiorIntersNotEmpty}, there exists a point $x \in \sigma^\circ \cap \lambda_\mathrm{lin}$.
    Moreover, as $X$ is complete, we have $|\mathrm{trop}(X)| \cap |\Sigma_X|= |\mathrm{trop}(X)|$, and therefore
    \begin{align*}
    \lambda _\mathrm{lin}= \bigcup_{\substack{\tau\in \Sigma\\\dim(\tau\cap \lambda_\mathrm{lin})
    =\dim(\lambda_\mathrm{lin})}} (\tau\cap \lambda_\mathrm{lin}).
\end{align*}
In particular there exists a cone $\tau \in \Sigma$ with $\dim(\tau \cap \lambda_\mathrm{lin})=\dim(\lambda_\mathrm{lin})$ and $x\in \sigma^\circ \cap\tau$.
As
$\sigma \cap \tau \preceq \sigma$ 
and 
$\sigma^\circ\cap\tau\neq\emptyset$ holds, we infer $\sigma \cap \tau=\sigma$ and thus $\sigma \preceq \tau$. Since $\sigma$ is a maximal cone, we conclude $\sigma =\tau$ and hence $\dim(\sigma\cap \lambda_\mathrm{lin})=\dim(\lambda_\mathrm{lin})$.
\end{proof}

\section{The Gorenstein Index via the Anticanonical Complex}
In this section we will describe how to read the Gorenstein index of an explicit general arrangement variety $X \subseteq Z_X$ off its anticanonical complex. We start by shortly recalling the construction of the anticanonical complex for these varieties and some basic facts on lattice distances.

\begin{construction}
Let $X(A, P, \Sigma) \subseteq Z_X$ be an explicit general arrangement variety. We consider the coarsest common refinement
$$\Sigma':= {\Sigma \sqcap \trop(X) := \left\{\sigma \cap \tau; \ \sigma \in \Sigma, \ \tau \in \trop(X)\right\}},$$
where $\mathrm{trop}(X)$ is endowed with the quasifan structure defined in Remark \ref{rem:tropAndMinimalAmbient}.
Let $\varphi \colon Z' \rightarrow Z$ be the toric morphism arising from the refinement of fans $\Sigma' \rightarrow \Sigma$ and let $X'$ be the proper transform of $X$ under $\varphi$. Then $Z'\rightarrow Z$ is called a \emph{weakly tropical resolution of $X$} and $X'\subseteq Z'$ fulfills the following conditions: 
\begin{enumerate}
    \item 
    $X' \subseteq Z'$ is again a general arrangement variety.
    \item The fan $\Sigma'$ consists of leaf cones.
    \item For any leaf cone $\sigma \in \Sigma$ we have $\sigma \in \Sigma'.$
\end{enumerate}

\end{construction}

Using the results of \cite{hw19} we obtain the following description of the anticanonical complex for general arrangement varieties:
\begin{construction}\label{constr:AKK}
Let $X \subseteq Z_X$ be an explicit $\QQ$-Gorenstein general arrangement variety and let $\varphi \colon Z' \rightarrow Z$ be its weakly tropical resolution.
For $0 \leq i \leq r$ we consider the following torus invariant divisors on $Z_X$:
$$
D_Z^{(i)} := \sum_{j=1}^{n_i} (r-c)l_{ij}D_{\varrho_{ij}} - \sum_{\varrho \in \Sigma^{(1)}} D_\varrho.
$$
Let $\sigma' \in \Sigma'$ be any cone. Then $\sigma'$ is a leaf cone and there exists an index $0 \leq i \leq r$ with $v_{ij} \notin \sigma'$ for all $1 \leq j \leq n_i$. Let $u_{\sigma'} \in M_\QQ$ be any element with $\mathrm{div}(\chi^{u_{\sigma'}}) = D_Z^{(i)}$. Then the anticanonical complex of $X \subseteq Z$ is given as 
$$
\mathcal{A}_X:= \bigcup_{\sigma' \in \Sigma'} A_{\sigma'}\qquad \quad
A_{\sigma'} := \sigma' \cap \left\{v \in N_\QQ; \ \bangle{u_{\sigma'}, v} \geq -1 \right\}.
$$
The \emph{relative interior} $\mathcal{A}_X^\circ$ of the anticanonical complex $\mathcal{A}_X$ is the interior of its support with respect to the tropical variety $\mathrm{trop}(X)$ 
and its \emph{boundary} is 
$$\partial \mathcal{A}_X := \mathcal{A}_X \setminus \mathcal{A}_X^\circ,$$
which we will assume to be endowed with the polyhedral complex structure
inherited from $\mathcal{A}_X.$
In particular, a cell of the anticanonical complex $\mathcal{A}_X$ lies in its boundary if and only if it does not contain $0$.
\end{construction}

\begin{example}\label{ex:AKKandGorenstein}
We consider the Fano explicit general arrangement variety $X := V(T_{01}^2T_{02} + T_{11}^2 + T_{21}^3)\subseteq \PP_{5,8,9,6} =:Z$ with Cox ring $R(A,P)$ defined by the matrices
$$A := \begin{bmatrix}
    -1 & 1 & 0\\-1 & 0 & 1
\end{bmatrix}
\quad \text{and} \quad
P=[v_{01},v_{02},v_{11},v_{21}] = \left[\begin{array}{cccc}
-2 & -1 & 2 & 0 
\\
 -2 & -1 & 0 & 3 
\\
 -1 & -2 & 1 & 2 
\end{array}\right].
$$
In particular, we have
$$\mathcal{R}(X) = R(A,P) = \CC[T_{01},T_{02},T_{11},T_{21}] / \bangle{T_{01}^2T_{02} + T_{11}^2 + T_{21}^3}$$
with generator degrees $[w_{01},w_{02},w_{11},w_{21}] = \begin{bmatrix}
    5& 8 & 9& 6
\end{bmatrix}$
and the fan $\Sigma$ corresponding to the minimal ambient toric variety $Z_X$ has the following three maximal cones: $\sigma_1 := \cone(v_{01},v_{11},v_{21})$, $\sigma_2 := \cone(v_{02},v_{11},v_{21})$ and $\sigma_3 := \cone(v_{01},v_{02})$.
The vertices of the anticanonical complex can then be calculated from these data using \cite[Cor. 6.5]{hw19}. These are $v_{01}, v_{02}, v_{11}, v_{21}$ and the points $u_1 = (0,0,2)$ and $u_2=(0,0,-1)$ in the lineality space $\lambda_{\mathrm{lin}}$ of the tropical variety $\mathrm{trop}(X)$.

We draw the anticanonical complex $\mathcal{A}_X$ and its boundary $\partial \mathcal{A}_X$ inside the tropical variety $\trop(X)$:
\begin{center}
\begin{tikzpicture}[tdplot_main_coords,
			edge/.style={thick},
			leaf0/.style={},
			leaf1/.style={},
			leaf2/.style={},
			leaf0e/.style={},
			leaf1e/.style={},
			leaf2e/.style={},
			le/.style={},
			axis/.style={},
			scale=.7]
	\coordinate (o) at (0,0,0);
	\coordinate (leaf0m) at (-\leafzero,-\leafzero,\linmin);
	\coordinate (leaf0p) at (-\leafzero,-\leafzero,\linmax);
	\coordinate (leaf0vm) at (\leafzero,\leafzero,\linmin);
	\coordinate (leaf0vp) at (\leafzero,\leafzero,\linmax);
	\coordinate (linm) at (0,0,\linmin);
	\coordinate (linp) at (0,0,\linmax);
	\coordinate (leaf1m) at (\leafone,0,\linmin);
	\coordinate (leaf1p) at (\leafone,0,\linmax);
	\coordinate (leaf1vm) at (-\leafone,0,\linmin);
	\coordinate (leaf1vp) at (-\leafone,0,\linmax);
	\coordinate (leaf2m) at (0,\leaftwo,\linmin);
	\coordinate (leaf2p) at (0,\leaftwo,\linmax);
	\coordinate (leaf2vm) at (0,-\leaftwo,\linmin);
	\coordinate (leaf2vp) at (0,-\leaftwo,\linmax);
	\coordinate (v01) at (-1,-1,-2);
	\coordinate (v02) at (-2,-2,-1);
	\coordinate (v1) at (2,0,1);
	\coordinate (v2) at (0,3,2);
	\coordinate (e1) at (0,0,2);
	\coordinate (e2) at (0,0,-1);
	
	
	\filldraw[leaf2,opacity=.1] (linp)--(leaf2p)--(leaf2m)--(linm);
	\draw[edge,leaf2e] (o) -- (v2);
	\draw[edge,leaf2e] (v2) -- (e1);
	\draw[edge,leaf2e] (v2) -- (e2);
	\filldraw[leaf2e,opacity=.2] (e1) -- (e2) -- (v2) -- cycle;

	\filldraw[leaf2e] (v2) circle (2pt) node[anchor=south west]{\color{gray}$v_{21}$};
	
	\filldraw[leaf1,opacity=.1] (linp)--(leaf1p)--(leaf1m)--(linm);
	\draw[edge,leaf1e] (o) -- (v1);
	\draw[edge,leaf1e] (v1) -- (e1);
	\draw[edge,leaf1e] (v1) -- (e2);
	\filldraw[leaf1e,opacity=.2] (e1) -- (e2) -- (v1) -- cycle;
	
	
	\filldraw[] (v1) circle (2pt) node[anchor=west]{$v_{11}$};  
	
	\filldraw[leaf0,opacity=.1] (linp)--(leaf0p)--(leaf0m)--(linm);
	
	\draw[edge,leaf0e] (o) -- (v01);
	\draw[edge,leaf0e] (o) -- (v02);
	\draw[edge,leaf0e] (v02) -- (e1);
	\draw[edge,leaf0e] (v01) -- (e2);
	\draw[edge,leaf0e] (v01) -- (v02);
	\filldraw[leaf0e,opacity=.2] (e1) -- (o) -- (v02) -- cycle;
	\filldraw[leaf0e,opacity=.2] (v01) -- (o) -- (v02) -- cycle;
	\filldraw[leaf0e,opacity=.2] (e2) -- (o) -- (v01) -- cycle;

	\filldraw[] (v01) circle (2pt) node[anchor=north east]{$v_{02}$};
	\filldraw[] (v02) circle (2pt) node[anchor=east]{$v_{01}$};
	
	\draw[edge] (o) -- (e1);
	\draw[edge] (o) -- (e2);
	
	
	\filldraw[] (o) circle (2pt);
	\filldraw[] (e1) circle (2pt) node[anchor=south east]{$u_1$};
	\filldraw[] (e2) circle (2pt) node[anchor=north west]{$u_2$};
	
	\draw[] ($(linm)+(0,0,-1)$) node{$\mathcal{A}_X$};

\begin{scope}[xshift=10cm]

	\coordinate (o) at (0,0,0);
	\coordinate (leaf0m) at (-\leafzero,-\leafzero,\linmin);
	\coordinate (leaf0p) at (-\leafzero,-\leafzero,\linmax);
	\coordinate (leaf0vm) at (\leafzero,\leafzero,\linmin);
	\coordinate (leaf0vp) at (\leafzero,\leafzero,\linmax);
	\coordinate (linm) at (0,0,\linmin);
	\coordinate (linp) at (0,0,\linmax);
	\coordinate (leaf1m) at (\leafone,0,\linmin);
	\coordinate (leaf1p) at (\leafone,0,\linmax);
	\coordinate (leaf1vm) at (-\leafone,0,\linmin);
	\coordinate (leaf1vp) at (-\leafone,0,\linmax);
	\coordinate (leaf2m) at (0,\leaftwo,\linmin);
	\coordinate (leaf2p) at (0,\leaftwo,\linmax);
	\coordinate (leaf2vm) at (0,-\leaftwo,\linmin);
	\coordinate (leaf2vp) at (0,-\leaftwo,\linmax);
	\coordinate (v01) at (-1,-1,-2);
	\coordinate (v02) at (-2,-2,-1);
	\coordinate (v1) at (2,0,1);
	\coordinate (v2) at (0,3,2);
	\coordinate (e1) at (0,0,2);
	\coordinate (e2) at (0,0,-1);
	
	
	\filldraw[leaf2,opacity=.1] (linp)--(leaf2p)--(leaf2m)--(linm);
	\draw[edge,leaf2e] (v2) -- (e1);
	\draw[edge,leaf2e] (v2) -- (e2);
	\filldraw[leaf2e,opacity=.05] (e1) -- (e2) -- (v2) -- cycle;
	
	\node[gray,anchor=south] (F6) at ($(v2)!0.4!(e1)$) {$F_6$};
	\node[gray,anchor=west] (F7) at ($(v2)!0.3!(e2)$) {$F_7$};
	
	\filldraw[leaf2e] (v2) circle (2pt);
	
	\filldraw[leaf1,opacity=.15] (linp)--(leaf1p)--(leaf1m)--(linm);
	\draw[edge,leaf1e] (v1) -- (e1);
	\draw[edge,leaf1e] (v1) -- (e2);
	\filldraw[leaf1e,opacity=.05] (e1) -- (e2) -- (v1) -- cycle;
	
	\node[leaf1e,anchor=west] (F4) at ($(v1)!0.7!(e1)$) {$F_4$};
	\node[leaf1e,anchor=north west] (F5) at ($(v1)!0.5!(e2)$) {$F_5$};
	
	\filldraw[] (v1) circle (2pt);  
	
	\filldraw[leaf0,opacity=.1] (linp)--(leaf0p)--(leaf0m)--(linm);

	\draw[edge,leaf0e] (v02) -- (e1);
	\draw[edge,leaf0e] (v01) -- (e2);
	\draw[edge,leaf0e] (v01) -- (v02);
	\filldraw[leaf0e,opacity=.05] (e1) -- (v02) -- (v01) -- (e2) -- cycle;
	
	\node[leaf0e,anchor=south east] (F1) at ($(v02)!0.5!(e1)$) {$F_1$};
	\node[leaf0e,anchor=north east] (F2) at ($(v02)!0.5!(v01)$) {$F_2$};
	\node[leaf0e,anchor=north west] (F2) at ($(v01)!0.5!(e2)$) {$F_3$};
	
	\filldraw[] (v02) circle (2pt);
	\filldraw[] (v01) circle (2pt); 
	
	
	\filldraw[] (o) circle (2pt);
	\filldraw[] (e1) circle (2pt);
	\filldraw[] (e2) circle (2pt);
	
	\draw[] ($(linm)+(0,0,-1)$) node{$\partial\mathcal{A}_X$};
\end{scope}
	
\end{tikzpicture}	
\end{center}
The maximal cells of $\partial\mathcal{A}_X$ are the line segments ${F_1 = \conv(u_1,v_{01})}$, ${F_2 = \conv(v_{01},v_{02})}$, $F_3 = \conv(u_2,v_{02})$, $F_4 = \conv(u_1,v_{11})$, $F_5 = \conv(u_2,v_{11})$, $F_6 = \conv(u_1,v_{21})$ and $F_7 = \conv(u_2,v_{21})$.

\end{example}

Now let us turn to lattices distances. A 
\emph{lattice subspace} is an affine subspace $A \subseteq \QQ^n$ such that $\mathrm{dim}(A) = \mathrm{rk}(A \cap \ZZ^n)$. 
Note that any affine subspace $A \subseteq \QQ^n$ that contains an element of $\ZZ^n$ is a lattice subspace. 
A \emph{lattice hyperplane} is a lattice subspace of codimension $1$. 

The \emph{lattice distance} $d(x,A)$ between a point $x \in \ZZ^n$ and a lattice subspace $A \subseteq \QQ^n$ is the number of lattice hyperplanes $H$ in the affine hull $\mathrm{aff}(A \cup \left\{x\right\})$ lying between $x$ and $A$, i.e.

$$
d(x,A) := \left\lvert\left\{H \subseteq \mathrm{aff}(A \cup \left\{x\right\}); \begin{array}{l}
H \text{ lattice hyperplane with } x \notin H \\\text{and } H \cap \mathrm{conv}(A \cup \left\{x\right\}) \neq \emptyset
\end{array}\right\}\right\rvert.
$$

It is well known that the lattice distance of a lattice hyperplane $H \subseteq \QQ^d$ and a point $x \in \ZZ^d$ can be calculated as follows: We have
$$
d(x, H) = |\bangle{u_H,v} - \bangle{u_H,x}|,
$$
where $u_H$ is a primitive normal of $H$ and $v$ is any point on $H$.
The lattice distance does not depend on unimodular transformations. For a convex set $B \subseteq \QQ^n$ with $\mathrm{aff}(B)$ a lattice subspace, we set
$d(x,B) := d(x, \mathrm{aff}(B)).$

Theorem \ref{thm:intro} is a direct consequence of the following proposition:
\begin{proposition}\label{prop:mainProp}
    Let $X \subseteq Z_X$ be an affine or complete explicit $\QQ$-Gorenstein general arrangement variety with anticanonical complex $\mathcal{A}_X$. Then the Gorenstein index $\iota_X$ of $X$ equals the least common multiple of the lattice distances of the maximal cells in the boundary of $\mathcal{A}_X$:
    $$
    \iota_X = \mathrm{lcm}(d(0, F); \ F \in \partial\mathcal{A}_X).
    $$
\end{proposition}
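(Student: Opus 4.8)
The plan is to reduce the statement to a chart‑by‑chart computation on the ambient toric variety, solve it on each affine toric chart, and reassemble. Since ``$\ell\mathcal K_X$ is Cartier'' is a local condition and, by the hypothesis that $X$ is affine or complete, the affine pieces $X_\sigma := X\cap Z_\sigma$ attached to the maximal cones $\sigma\in\Sigma$ cover $X$, we have $\iota_X=\lcm_\sigma \iota_{X_\sigma}$, where $\iota_{X_\sigma}$ is the local Gorenstein index. On the combinatorial side, writing $\Sigma'=\Sigma\sqcap\trop(X)$ and, for $\sigma'\in\Sigma'$, $F_{\sigma'}:=A_{\sigma'}\cap\{\bangle{u_{\sigma'},\cdot}=-1\}$, one checks that $F_{\sigma'}$ is the unique maximal face of $A_{\sigma'}$ avoiding $0$, that lower–dimensional such faces are faces of the $F_{\sigma'}$, and hence that the maximal cells of $\partial\mathcal A_X$ lying over a maximal cone $\sigma$ are exactly the $F_{\sigma'}$ for $\sigma'$ a maximal leaf cone of $\Sigma'$ refining $\sigma$ (for $\sigma$ itself a leaf cone this is just $F_\sigma$). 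So it suffices to prove, for every maximal $\sigma$, that $\iota_{X_\sigma}=\lcm\bigl(d(0,F_{\sigma'});\ \sigma'\preceq\sigma\text{ maximal refining leaf cone}\bigr)$.

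The second step identifies $\iota_{X_\sigma}$ with the order of the canonical class in the local divisor class group. Each $X_\sigma$ is again an affine general arrangement variety; it is affine with a torus action contracting it onto its fixed–point set (for $\sigma$ full–dimensional, an attracting $\GG_m$‑action toward the distinguished point), so $\mathrm{Pic}(X_\sigma)=0$ and $\ell\mathcal K_{X_\sigma}$ is Cartier iff $\ell[\mathcal K_{X_\sigma}]=0$ in $\Cl(X_\sigma)$; for $\sigma$ a leaf cone one uses instead the toric‑like local structure of $X$ along leaf cones from \cite{hw19}. Combining $\Cl(X_\sigma)=\ZZ^{\sigma^{(1)}}/\im(P_\sigma^{*})$ with adjunction for the complete intersection $X_\sigma$, which gives $[\mathcal K_{X_\sigma}]=[D_Z^{(i)}]\big|_{X_\sigma}$ for every block $i$ ($D_Z^{(i)}$ as in Construction \ref{constr:AKK}), this translates into: $\iota_{X_\sigma}$ is the smallest $\ell>0$ admitting $w\in M$ with $\bangle{w,v_\varrho}=\ell\,(D_Z^{(i)})_\varrho$ for all rays $\varrho$ of $\sigma$, i.e.\ the Cartier index of the toric divisor $D_Z^{(i)}$ on the chart $Z_\sigma$. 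The same computation on a leaf chart $Z'_{\sigma'}$ of the weakly tropical resolution — where $u_{\sigma'}$ is the local representative of $D_Z^{(i)}$, taking the value $-1$ on the old rays of $\sigma'$ — shows that the Cartier index of $D_Z^{(i)}$ on $Z'_{\sigma'}$ is the smallest $\ell$ with $\ell\,u_{\sigma'}$ integral on $\lin(\sigma')\cap M$, which by the formula recalled for the lattice distance to a lattice hyperplane is precisely $d(0,F_{\sigma'})$. For $\sigma$ a leaf cone this already finishes the case, $\iota_{X_\sigma}=d(0,F_\sigma)$.

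It remains to compare, for $\sigma$ a (maximal) big cone, the Cartier index of $D_Z^{(i)}$ on the big chart $Z_\sigma$ with the least common multiple of its Cartier indices on the refining leaf charts $Z'_{\sigma'}$. Here $D_Z^{(i)}$ is $\QQ$‑Cartier on the single cone $\sigma$, hence represented there by one linear functional $u_\sigma\in M_\QQ$, so the two indices are respectively the smallest $\ell$ with $\ell u_\sigma\in M$ and the smallest $\ell$ with $\ell u_\sigma$ integral on every $\lin(\sigma')\cap M$; these agree exactly when the saturated sublattices $\lin(\sigma')\cap M$, over maximal refining leaf cones $\sigma'$, generate $M$. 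This is where Proposition \ref{prop:fullDimInterMaxBigCone} enters: it forces $\sigma\cap\lambda_\mathrm{lin}$ to have full dimension $s$, so each $\lin(\sigma')$ contains $\lambda_\mathrm{lin}$, contributing $\{0\}^r\times\ZZ^s$ to every lattice, while modulo $\lambda_\mathrm{lin}$ the (saturated) lattice $\lin(\sigma\cap\lambda_J)\cap M$ contains the primitive generators $\bar e_j$, $j\in J$, of the rays of $\trop(X)/\lambda_\mathrm{lin}$; since $\sigma$ is big the leaves meeting $\sigma$ exhaust $j=0,\dots,r$, and $\bar e_0=-\sum_{i=1}^r\bar e_i,\bar e_1,\dots,\bar e_r$ generate $\ZZ^r$. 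Hence the sublattices generate $M$, the two indices coincide, and with the previous step $\iota_{X_\sigma}=\lcm_{\sigma'}d(0,F_{\sigma'})$; taking $\lcm$ over all maximal $\sigma$ proves the proposition.

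The main obstacle is exactly this last comparison: a priori, refining a big cone into leaf cones could make $D_Z^{(i)}$ Cartier at a strictly smaller multiple on each piece than on the big chart, and it is only the full–dimensional intersection of a big cone with the lineality space of $\trop(X)$ — Proposition \ref{prop:fullDimInterMaxBigCone} — that rules out this loss, by making the sublattices $\lin(\sigma')\cap M$ jointly large enough to detect integrality over all of $M$. The remaining ingredients — the local–to–global reduction for $\iota_X$, the vanishing $\mathrm{Pic}(X_\sigma)=0$, adjunction on the complete intersections $X_\sigma$, the identification $[\mathcal K_{X_\sigma}]=[D_Z^{(i)}]|_{X_\sigma}$, and the lattice‑distance formula — are routine, the structural ones being available from \cite{hw19}.
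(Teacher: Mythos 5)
Your proposal is correct and follows essentially the same route as the paper: reduce to the Cartier index of the divisors $D_Z^{(i)}$ on the toric charts (the paper's Lemma \ref{lem:GorensteinIndexViaDivCHiU}, which you re-derive chart-wise via $\mathrm{Pic}(X_\sigma)=0$ and adjunction instead of the pullback isomorphism of Picard groups), identify these local indices with the lattice distances $d(0,F_{\sigma'})$ via the minimality property of Lemma \ref{lem:latticeDistanceMinimumLemma}, and handle maximal big cones by invoking Proposition \ref{prop:fullDimInterMaxBigCone} exactly as the paper does. Your ``the sublattices $\lin(\sigma')\cap N$ generate $N$'' argument is just the dual formulation of the coordinate bookkeeping in the paper's Lemma \ref{lem:AKKPartsToWholeRoof}, so the differences are cosmetic.
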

\begin{example}[Example \ref{ex:AKKandGorenstein} continued]
We calculate the Gorenstein index of the variety $X = V(T_{01}^2T_{02} + T_{11}^2 + T_{21}^3) \subseteq \PP_{5,8,9,6}$ as described in Example \ref{ex:AKKandGorenstein} 
using the above Proposition \ref{prop:mainProp}:
We have
$$
\begin{array}{cccc}
d(0, F_1) = 4, & d(0,F_2) = 3, & d(0, F_3) = 1, &d(0, F_4) = 4,\\
d(0,F_5) = 1,  & d(0, F_6) = 2, &d(0,F_7) = 1, &
\end{array}
$$
and obtain
$$
\iota_X = \mathrm{lcm}(d(0,F_i);  \ i = 1, \ldots, 7) = 12.
$$
\end{example}

\begin{lemma}\label{lem:GorensteinIndexViaDivCHiU}
    Let $X \subseteq Z_X$ be a $\QQ$-Gorenstein general arrangement variety. Then
    $$
    c_\sigma:= \min\! \left\{m \in \ZZ_{>0};  \begin{array}{l}
    \text{there exists } u \in \QQ^{r+s} \text{ with } m\!\cdot \! u \in \ZZ^{r+s} \\\text{and } \mathrm{div}(\chi^u)|_{Z_\sigma} = D_Z^{(i)}|_{Z_\sigma}
    \end{array}\right\}
    $$
    does not depend on the choice of $i \in \left\{0, \ldots, r\right\}$ and 
    the Gorenstein index $\iota_X$ of $X$ equals $\mathrm{lcm}(c_\sigma; \ \sigma \in \Sigma)$.
\end{lemma}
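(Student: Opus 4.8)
The plan is to localise the Gorenstein condition to the affine toric charts $Z_\sigma$ of the minimal ambient toric variety and to the affine pieces $X_\sigma := X \cap Z_\sigma$, and then to transport it between $X_\sigma$ and $Z_\sigma$. Write $\Sigma$ for the fan of $Z_X$. From Construction~\ref{constr:AKK} and the underlying results of \cite{hw19}, $D_Z^{(i)}$ represents $\mathcal{K}_X$ on each admissible chart, so that the local Gorenstein index $\iota_\sigma := \min\{m \in \ZZ_{>0};\ m\mathcal{K}_X|_{X_\sigma}\text{ Cartier}\}$ coincides with the least $m>0$ for which $m D_Z^{(i)}|_{X_\sigma}$ is Cartier on $X_\sigma$. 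Since $X = \bigcup_{\sigma\text{ maximal}}X_\sigma$ and being Cartier is local, $\iota_X = \mathrm{lcm}(\iota_\sigma;\ \sigma \in \Sigma\text{ maximal})$; and since $Z_\tau \subseteq Z_\sigma$ and $X_\tau \subseteq X_\sigma$ are open whenever $\tau \preceq \sigma$, we get $\iota_\tau \mid \iota_\sigma$ and $c_\tau \mid c_\sigma$. Hence it is enough to prove $\iota_\sigma = c_\sigma$ for every $\sigma$, and then $\mathrm{lcm}(c_\sigma;\ \sigma\in\Sigma) = \mathrm{lcm}(\iota_\sigma;\ \sigma\text{ maximal}) = \iota_X$.

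For the independence of $c_\sigma$ on the index $i$ I would argue globally on $Z_X$: for any $i, i'$ the difference $D_Z^{(i)} - D_Z^{(i')} = (r-c)\bigl(\sum_j l_{ij}D_{\varrho_{ij}} - \sum_j l_{i'j}D_{\varrho_{i'j}}\bigr)$ has trivial class in $\Cl(Z_X) = K$, because $[\sum_j l_{ij}D_{\varrho_{ij}}] = \deg(T_i^{l_i})$ and all the monomials $T_0^{l_0},\dots,T_r^{l_r}$ carry the same $K$-degree — this is precisely the $K$-homogeneity of the relations $g_t$. Since for the toric variety $Z_X$ trivial class means principal via a character, $D_Z^{(i)} - D_Z^{(i')} = \mathrm{div}(\chi^{w})$ for some $w \in \ZZ^{r+s}$. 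Thus for every cone $\sigma$ and every $m>0$ the divisors $mD_Z^{(i)}|_{Z_\sigma}$ and $mD_Z^{(i')}|_{Z_\sigma}$ differ by the principal divisor $\mathrm{div}(\chi^{mw})|_{Z_\sigma}$, so one is Cartier exactly when the other is; hence $c_\sigma$ is independent of $i$. (The same observation shows that on an admissible chart $\mathcal{K}_X|_{X_\sigma}$ and $D_Z^{(i)}|_{X_\sigma}$ differ by a principal divisor, so may be used interchangeably for the Cartier question.)

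It remains to show $\iota_\sigma = c_\sigma$. Unwinding the minima, $c_\sigma$ is the least $m>0$ with $mD_Z^{(i)}|_{Z_\sigma}$ Cartier on $Z_\sigma$, while $\iota_\sigma$ is the least $m>0$ with $mD_Z^{(i)}|_{X_\sigma}$ Cartier on $X_\sigma$. One divisibility is immediate: a character $\chi^u$, $u\in\ZZ^{r+s}$, with $\mathrm{div}(\chi^u)|_{Z_\sigma} = mD_Z^{(i)}|_{Z_\sigma}$ restricts to one realising $mD_Z^{(i)}|_{X_\sigma}$ as principal on $X_\sigma$, which gives $\iota_\sigma \mid c_\sigma$ and at the same time shows $D_Z^{(i)}|_{Z_\sigma}$ is $\QQ$-Cartier, so that the minimum defining $c_\sigma$ runs over a non-empty set. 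The reverse divisibility $c_\sigma \mid \iota_\sigma$ is the heart of the matter, and I would deduce it from the structural properties of the embedding $X \subseteq Z_X$ coming from the Cox ring identification $\mathcal{R}(X) = R(A,P)$ (see \cite{wrobel:ta_hc, hw19}): this embedding is compatible with the toric structure in the sense that restriction of divisors induces an isomorphism $\Cl(Z_\sigma) \xrightarrow{\ \sim\ } \Cl(X_\sigma)$ carrying the class of $D_Z^{(i)}|_{Z_\sigma}$ to that of $D_Z^{(i)}|_{X_\sigma}$ — equivalently, $X_\sigma$ meets every torus orbit of $Z_\sigma$, each boundary divisor is cut out as a reduced irreducible subvariety with the expected vanishing orders of characters, and every invertible function on the big orbit $X \cap \TT^{r+s}$ is, up to a constant, the restriction of an ambient character. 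Granting this, $mD_Z^{(i)}|_{X_\sigma}$ has trivial class in $\Cl(X_\sigma)$ if and only if $mD_Z^{(i)}|_{Z_\sigma}$ has trivial class in $\Cl(Z_\sigma)$, i.e.\ $mD_Z^{(i)}|_{X_\sigma}$ is Cartier exactly when $mD_Z^{(i)}|_{Z_\sigma}$ is, so $c_\sigma \mid \iota_\sigma$ and the proof is complete. The main obstacle is precisely verifying this compatibility of the chart class groups (or, in the concrete variant, that the relevant principal structure on $X_\sigma$ descends from an ambient character, which uses that all rays of $\Sigma$ lie in $\mathrm{trop}(X)$); everything else is bookkeeping with $\Sigma$ and the combinatorial data $(A,P)$.
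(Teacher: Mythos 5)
Your overall strategy is the same as the paper's: identify the Gorenstein index of $X$ with the Cartier index of $D_Z^{(i)}$ on the ambient toric variety and then compute the latter chart by chart, where it becomes the purely lattice-theoretic quantity $c_\sigma$; your argument for the independence of $i$ (all monomials $T_j^{l_j}$ share the same $K$-degree, so the $D_Z^{(i)}$ are linearly equivalent on $Z_X$) is exactly the paper's parenthetical remark. The one place where you diverge is the transfer step, and it is also the one step you leave unproved: you localise on $X$ first and then need the chart-level statement that $mD_Z^{(i)}|_{X_\sigma}$ is Cartier iff $mD_Z^{(i)}|_{Z_\sigma}$ is, which you explicitly flag as ``the main obstacle'' without closing it. The paper avoids the chart-level class-group comparison entirely by invoking the global fact that the pullback $\Cl(Z_X)\to\Cl(X)$ is an isomorphism both on divisor class groups and on Picard groups (citing \cite{adhl}); this immediately gives that the Cartier index of $D_Z^{(i)}$ on all of $Z_X$ equals $\iota_X$, after which the decomposition into the $c_\sigma$ is standard toric geometry and needs no information about $X_\sigma$ at all. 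Your local variant can be made to work (the local class groups $\Cl(X,x)\cong\Cl(Z_X,z_\sigma)$ for $x$ in the orbit of $\sigma$ are also in \cite{adhl}, and minimality of $Z_X$ guarantees $X$ meets that orbit), but as written the key divisibility $c_\sigma\mid\iota_\sigma$ rests on an asserted isomorphism $\Cl(Z_\sigma)\xrightarrow{\sim}\Cl(X_\sigma)$ that you neither prove nor precisely reference, so you should either supply that reference or switch to the global Picard-group isomorphism, which makes the whole issue disappear.
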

\begin{proof}
We recall that the pullback homomorphism $\mathrm{Cl}(Z_X) \rightarrow \mathrm{Cl}(X)$ is an isomorphism on the level of divisor class groups as well as on the level of Picard groups, see \cite{adhl}. In particular, as $X$ is $\QQ$-Gorenstein, each of the (linear equivalent) divisors $D_Z^{(i)}$ is $\QQ$-Cartier on $Z_X$ and their Cartier index equals the Gorenstein index of $X$.
As $Z_X$ is toric, for each $\sigma \in \Sigma$ we have
$$D_Z^{(i)}|_{Z_{\sigma}} = \mathrm{div}(\chi^u)|_{Z_\sigma}$$
for some $u \in \QQ^{r+s}$. Therefore, using $\mathrm{Cl}(Z_X) \cong \mathrm{Cl}(Z_X)^\TT$, we conclude that the Cartier index of $D_X^{(i)}$ on $Z_\sigma$ equals $c_\sigma$. In particular, $c_\sigma$ does not depend on the choice of $i$ and the Cartier index of $D_Z^{(i)}$ on $Z$ equals $\mathrm{lcm}(c_\sigma; \ \sigma \in \Sigma)$ as claimed.
\end{proof}

\begin{lemma}\label{lem:AKKPartsToWholeRoof}
Let $H \subseteq \QQ^{r+s}$ be a lattice hyperplane with $0 \notin H$.
Let $e_1, \ldots, e_{r+s}$ be the standard basis vectors and set $e_0:=-\sum e_i$ and consider for $0 \leq i \leq r$ the lattice subspaces
$$
H_i :=  H \cap \lambda_i, \quad \text{with } \lambda_i := \mathrm{cone}(e_i) + \mathrm{lin}(e_{r+1}, \ldots, e_{r+s}).
$$
If $\mathrm{dim}(H \cap \mathrm{lin}(e_{r+1}, \ldots, e_{r+s})) = s - 1$ and $\mathrm{dim}(H_i) = s$ holds for all $0 \leq i \leq r$, then for any subset $I \subseteq \left\{0, \ldots, r\right\}$ with $|I| = r$, we have
$$
d(0,H)  = \mathrm{lcm}(d(0,H_i); \ i \in I).
$$
\end{lemma}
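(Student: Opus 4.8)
The plan is to work with an explicit primitive normal vector of $H$ and track what the lattice-distance numbers $d(0,H)$ and $d(0,H_i)$ are in terms of its coordinates. Write $u = (u_1, \ldots, u_{r+s}) \in \QQ^{r+s}$ for the linear functional with $H = \{v; \bangle{u,v} = 1\}$ (we may normalize the constant to $1$ since $0 \notin H$; if $H$ has no rational point with value $1$ we instead write $H = \{\bangle{u,v} = \delta\}$ with $u$ primitive and $\delta \in \QQ_{>0}$, and then $d(0,H) = $ the denominator-adjusted quantity, but let me phrase it cleanly below). The key observation is that $d(0,H)$ equals $\bangle{u_H, w}$ evaluated at a point $w \in H$ for $u_H$ a primitive integral normal of $H$; since $0 \notin H$ this is a positive integer, and it equals $1/q$ where $q$ is the largest rational so that $qu_H$ still evaluates to an integer on $H$ — concretely, if we take $u \in \QQ^{r+s}$ primitive in the sense that $u$ generates the rank-one lattice $\{v^* \in (\ZZ^{r+s})^*; v^*|_{H} \text{ is constant}\}^* \cap \QQ u$ appropriately, then $d(0,H)$ is the unique positive integer $N$ with $Nu \in (\ZZ^{r+s})^*$ minimal. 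So the lemma reduces to a divisibility bookkeeping statement about restrictions of $u$ to the sublattices spanned by $\lambda_i \cap \ZZ^{r+s}$.

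Here is the structure I would carry out. First, fix $u_H \in (\ZZ^{r+s})^*$ a primitive integral normal of $H$ and let $\beta := \bangle{u_H, w}$ for $w \in H$, so $d(0,H) = |\beta| = \beta > 0$. For each $i$, the hypothesis $\dim(H_i) = s$ means $H_i$ is a lattice hyperplane inside the $(s+1)$-dimensional lattice subspace $\lambda_i$ (note $\lambda_i \cap \ZZ^{r+s}$ has rank $s+1$, spanned by $e_i$ and $e_{r+1}, \ldots, e_{r+s}$), so $d(0, H_i)$ is computed by restricting $u_H$ to this sublattice, taking the primitive part there, and evaluating. Concretely: the restriction $u_H^{(i)}$ of $u_H$ to $\Lambda_i := \mathrm{lin}(e_i, e_{r+1}, \ldots, e_{r+s}) \cap \ZZ^{r+s}$ (which is just $\ZZ e_i \oplus \ZZ e_{r+1} \oplus \cdots \oplus \ZZ e_{r+s}$) is some integral functional; let $g_i := \gcd$ of its coordinates, i.e.\ $u_H^{(i)} = g_i \cdot \tilde u_i$ with $\tilde u_i$ primitive on $\Lambda_i$. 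Then $d(0,H_i) = \beta / g_i$, since $\bangle{u_H^{(i)}, w_i} = \bangle{u_H, w_i} = \beta$ for $w_i \in H_i \subseteq H$. Thus the claimed identity $d(0,H) = \mathrm{lcm}(d(0,H_i); i \in I)$ becomes
$$
\beta = \mathrm{lcm}\!\left(\frac{\beta}{g_i}; \ i \in I\right),
$$
equivalently $\gcd(g_i; i \in I) = 1$ — the $g_i$ for $i$ ranging over any $r$-element subset $I$ have no common factor.

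So the heart of the proof is: \emph{for any $r$-element $I \subseteq \{0, \ldots, r\}$ we have $\gcd(g_i; i \in I) = 1$}, where $g_i = \gcd(\bangle{u_H, e_i}, \bangle{u_H, e_{r+1}}, \ldots, \bangle{u_H, e_{r+s}})$. Write $u_H = (a_1, \ldots, a_r, b_1, \ldots, b_s)$ in standard coordinates; then $\bangle{u_H, e_i} = a_i$ for $1 \leq i \leq r$ and $\bangle{u_H, e_0} = -(a_1 + \cdots + a_r)$, while $\bangle{u_H, e_{r+k}} = b_k$. Set $\delta := \gcd(b_1, \ldots, b_s)$. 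The hypothesis $\dim(H \cap \mathrm{lin}(e_{r+1}, \ldots, e_{r+s})) = s-1$ forces $(b_1, \ldots, b_s) \neq 0$, so $\delta > 0$; note also $g_i = \gcd(\bangle{u_H, e_i}, \delta)$. Since $I$ omits at most one index from $\{0, \ldots, r\}$, among $\{g_i; i \in I\}$ we get $\gcd(g_i; i\in I) = \gcd\bigl(\delta, \{a_i; i \in I \cap \{1,\ldots,r\}\}, (\text{possibly } a_1+\cdots+a_r)\bigr)$. If $I$ contains all of $1, \ldots, r$ this is $\gcd(\delta, a_1, \ldots, a_r) = \gcd(a_1, \ldots, a_r, b_1, \ldots, b_s) = 1$ by primitivity of $u_H$. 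If $I$ omits some $j \in \{1,\ldots,r\}$ then it contains $0$, hence the gcd is $\gcd(\delta, a_1+\cdots+a_r, \{a_i; i \neq j\})$; but $a_j = (a_1 + \cdots + a_r) - \sum_{i \neq j} a_i$ lies in the ideal generated by these, so again the gcd equals $\gcd(\delta, a_1, \ldots, a_r) = 1$. This closes the argument.

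The main obstacle is purely in setting up the bookkeeping carefully: making sure that $d(0,H_i)$ really is $\beta/g_i$ (i.e.\ that restricting the primitive normal $u_H$ of $H$ to the sublattice $\Lambda_i$ and dividing out its content yields the primitive normal of $H_i$ inside $\lambda_i$ — this uses that $\Lambda_i$ is a direct summand of $\ZZ^{r+s}$, which it is, being a coordinate sublattice), and handling the $e_0 = -\sum e_i$ case so that the $r$-element subset condition is exactly what is needed. Once the identities $d(0,H) = \beta$ and $d(0,H_i) = \beta/g_i$ are in place, the number-theoretic step $\gcd(g_i; i \in I) = 1$ is the short computation above. I would also remark that the two dimension hypotheses are both used: $\dim(H_i) = s$ guarantees $H_i$ is a genuine lattice hyperplane of $\lambda_i$ (so $d(0,H_i)$ is defined and positive and the restriction of $u_H$ is nonzero), and $\dim(H \cap \mathrm{lin}(e_{r+1},\ldots,e_{r+s})) = s-1$ guarantees $\delta = \gcd(b_1,\ldots,b_s) > 0$, which is what lets us write $g_i = \gcd(a_i, \delta)$ uniformly.
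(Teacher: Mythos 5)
Your proposal is correct and follows essentially the same route as the paper's proof: both reduce the claim to the observation that a normal of $H_i$ is obtained by restricting a normal of $H$ to the coordinate sublattice spanned by $e_i, e_{r+1}, \ldots, e_{r+s}$, which turns the statement into elementary $\mathrm{lcm}/\gcd$ bookkeeping on the coordinates of that normal. The only differences are cosmetic: the paper normalizes the rational normal to take value $1$ on $H$ and tracks denominators, whereas you work with the primitive integral normal and track contents, and you additionally spell out the case $0 \in I$ (via $a_j$ lying in the ideal generated by $a_0$ and the remaining $a_i$), which the paper treats only \emph{exemplarily} for $I = \{1, \ldots, r\}$.
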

\begin{proof}
We exemplarily prove the case $I = \left\{1, \ldots,r\right\}$. 
Let $b \in \QQ^{r+s}$ with $\bangle{b,v} = 1$ for all $v \in H$ and let $m \in \ZZ_{\geq 1}$ be the minimal element such that $m \cdot b \in \ZZ^{r+s}$. Then $m \cdot b$ is a primitive normal of $H$ and we have $d(0,H) = m$. Identifying $\mathrm{lin}(\lambda_i)$ with $\QQ ^{1+s}$ via the projection
$$
\pi_i \colon \QQ^{r+s} \rightarrow \QQ^{1+s}, \quad (a_1, \ldots, a_{r+s}) \mapsto (a_i, a_{r+1}, \ldots, a_{r+s})
$$
we can regard $H_i$ as a lattice hyperplane in $\QQ^{1+s}$ and $b^{(i)}:=(b_i, b_{r+1}, \ldots, b_{r+s})$ fulfills $\bangle{b^{(i)},v} =1$ for all $v\in H_i$. In particular, for the minimial $m^{(i)} \in \ZZ_{\geq 1}$ with $m^{(i)} \cdot b^{(i)} \in \ZZ^{1+s}$ we have $d(0, H_i) = m^{(i)}.$ Due to the structure of the $b^{(i)}$,  we conclude
$$
    d(0,H) = m = \mathrm{lcm}(m^{(i)}; \ 1 \leq i \leq r) = \mathrm{lcm}(d(0,H_i); \ 1 \leq i \leq r).
$$
\end{proof}
We will make frequent use of the following straightforward statement about lattice distances of lattice subspaces:
\begin{lemma}\label{lem:latticeDistanceMinimumLemma}
    Let $0 \notin A \subseteq M_\QQ$ be a lattices subspace. Then for every lattice subspace $0 \notin A'$ containing $A$ we have $d(0,A)\mid d(0,A')$ and in particular $d(0,A) \leq d(0,A')$. Moreover, we have
    $$
    d(0,A) = \mathrm{min}\!\left\{d(0,H); \ 0\notin H \subseteq M_\QQ \text{ lattice hyperplane with } A \subseteq H\right\}.
    $$
\end{lemma}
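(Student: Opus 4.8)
The plan is to reduce everything to the recalled formula $d(x,H)=|\langle u_H,v\rangle-\langle u_H,x\rangle|$ for the lattice distance of a point from a lattice hyperplane, and to exploit that a sublattice of the form $M\cap\lin(A)$ is saturated, hence a direct summand. As preparation, fix notation as follows. Since $0\notin A$, the affine hull $\mathrm{aff}(A\cup\{0\})$ equals the linear span $\lin(A)$, and $A$ is a lattice hyperplane of $\lin(A)$ with respect to the sublattice $M_A:=M\cap\lin(A)$. Let $\bar u_A$ be the unique linear form on $\lin(A)$ with $\bar u_A|_A\equiv 1$; it exists and is unique since $A$ is an affine hyperplane of $\lin(A)$ missing the origin, and every linear form on $\lin(A)$ that is constant on $A$ is a scalar multiple of $\bar u_A$, these being exactly the forms vanishing on the codimension-one direction space of $A$. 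Applying the recalled formula inside $\lin(A)$ at $x=0$ then yields the characterization
$$
d(0,A)\;=\;\min\{\,m\in\ZZ_{\ge 1}\ :\ m\,\bar u_A\text{ is integral on }M_A\,\},
$$
equivalently, $d(0,A)\cdot\bar u_A$ is the primitive normal of the lattice hyperplane $A$ inside $\lin(A)$. (Being a lattice subspace, $A$ contains a lattice point, which makes $d(0,A)$ a positive integer; the case $\dim A=0$, where $A$ is a single lattice point of the line $\lin(A)$, is covered as well.)

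Next I would establish the divisibility in the following sharp form: whenever $A\subseteq H\subseteq M_\QQ$ with $H$ a lattice hyperplane and $0\notin H$, one has $d(0,A)\mid d(0,H)$. Indeed, take a primitive normal $u_H$ of $H$, scaled so that $\langle u_H,\cdot\rangle\equiv d(0,H)$ on $H$. Then the restriction $u_H|_{\lin(A)}$ is integral on $M_A$ and constant equal to $d(0,H)$ on $A$, hence equals $d(0,H)\cdot\bar u_A$ by the observation above; so $d(0,H)\cdot\bar u_A$ is integral on $M_A$, and the minimality characterization forces $d(0,A)\mid d(0,H)$. The first assertion of the lemma is now immediate: given $A\subseteq A'$ with $0\notin A'$, use the second assertion (proved below) to pick a lattice hyperplane $H\supseteq A'$ with $0\notin H$ and $d(0,H)=d(0,A')$; since $A\subseteq H$, we get $d(0,A)\mid d(0,H)=d(0,A')$, and in particular $d(0,A)\le d(0,A')$.

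It remains, for the second assertion, to exhibit a single lattice hyperplane $H\supseteq A$ with $0\notin H$ and $d(0,H)=d(0,A)$; combined with the divisibility just proved this gives the claimed minimum. Here I would use that $M_A=M\cap\lin(A)$ is saturated in $M$, so $M=M_A\oplus C$ for some complement $C$. By the minimality characterization, $b_A:=d(0,A)\cdot\bar u_A$ is a primitive integral form on $M_A$; extending it by zero on $C$ produces an integral linear form $u$ on $M_\QQ$ with $u|_{M_A}=b_A$, which is again primitive because its restriction to $M_A$ is. Then $H:=\{x\in M_\QQ\ :\ \langle u,x\rangle=d(0,A)\}$ is a lattice hyperplane; it contains $A$ since $u$ agrees with $b_A$ on $\lin(A)$ and thus $u|_A\equiv d(0,A)$, it misses the origin since $d(0,A)\ge 1$, and since $u$ is a primitive normal of $H$ we obtain $d(0,H)=d(0,A)$.

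Every step is short, and the only place that needs a little care is the first one: one must check that the somewhat informally stated definition of the lattice distance between $0$ and a lattice subspace really does collapse, on passing to $\lin(A)$, to the primitive-normal formula, in particular that no extra lattice hyperplanes between $0$ and $A$ are introduced. I expect this bookkeeping to be the only mild obstacle; the rest is a direct consequence of saturatedness together with the minimality characterization of $d(0,A)$.
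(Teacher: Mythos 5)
Your proof is correct and follows essentially the same route as the paper's: both rest on the primitive-normal formula for the distance to a hyperplane, the observation that the normal of any lattice hyperplane containing $A$ restricts to an integer multiple of the primitive normal of $A$, and a splitting of the lattice along the saturated sublattice $M\cap\lin(A)$ to produce the minimizing hyperplane (the paper realizes this splitting by an explicit unimodular transformation, you by choosing a complement --- the same device). The only cosmetic difference is that you obtain the divisibility for a general $A'$ by passing through the minimizing hyperplane over $A'$, whereas the paper simply restricts the ambient space to $\lin(A')$ so that $A'$ itself becomes a hyperplane.
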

\begin{proof} 
    By replacing $M_\QQ$ with $\mathrm{lin}(A')$, it suffices to show the first assertion for lattice hyperplanes.
    Applying a suitable unimodular transformation we may futhermore assume $M = \ZZ^{n+m}$ and $\mathrm{aff}(A \cup \left\{0\right\})= \QQ^n \subseteq \QQ^{n+m}$. 
    In particular, there exists a unique primitive normal $u_A \in \ZZ^n$ of $A$ with $d(0,A) = \bangle{u_A,v}$ for any $v \in A$. Now let $0 \notin H \subseteq \ZZ^{n+m}$  be any hyperplane containing $A$. Then there is a primitive normal of $H$ of the form $u_H = (\lambda \cdot u_A, u)$ for some $u \in \ZZ^{n-s}$ and $\lambda \in \ZZ_{> 0}$. We conclude
    $\bangle{u_A,v} \mid \bangle{u_H, v}$ for any $v \in A \subseteq H$ and thus $d(0,A) \mid d(0,H)$. This shows the first assertion. 
    Moreover, the hyperplane $0 \notin H$ with $A\subseteq H$ and primitive normal $(u_A,0, \ldots, 0) \in \ZZ^n$ fulfills $d(0,A) = d(0,H)$ and we obtain the desired equality.    
\end{proof}

\begin{proof}[Proof of Proposition \ref{prop:mainProp}]
Let $\sigma \in \Sigma$ be any cone and let $u \in \QQ^{r+s}$ such that $\mathrm{div}(\chi^u)|_{Z_\sigma} = D_Z^{(i)}|_{Z_\sigma}$ holds. In a first step we show that the lattice distance $d(0, B_\sigma^{(i)})$ with
$$
B_\sigma^{(i)} :=  \sigma \cap \left\{v \in N_\QQ; \ \bangle{u, v} = -1\right\}
$$
equals $c_\sigma$ as defined in Lemma \ref{lem:GorensteinIndexViaDivCHiU}.
By construction, the hyperplanes $H$ with normal $u \in \QQ^{r+s}$ fulfilling $\mathrm{div}(\chi^u)|_{Z_\sigma} = D_Z^{(i)}|_{Z_\sigma}$ and $\bangle{u,v}  = -1$ for all $v \in H$ are precisely the hyperplanes containing $B_\sigma$.
Moreover, for these hyperplanes $H$ we have $d(0,H) = m$, where $m \in \ZZ_{>0}$ is the minimal integer with $m \cdot u \in \ZZ^{r+s}$.
Using Lemma \ref{lem:latticeDistanceMinimumLemma} we conclude $d(0,B_\sigma^{(i)}) = c_\sigma$ as claimed.

To complete the proof, we note that, in the notation of Construction \ref{constr:AKK}, the cells of the anticanonical complex $\mathcal{A}_X$ that lie in its boundary are the polyhedra
$$C_{\sigma'} := \sigma' \cap \left\{v \in N_\QQ; \ \bangle{u_{\sigma'}, v} = -1\right\}.$$
In particular, we are left with showing
that 
$$d(B_\sigma^{(i)}, 0) = \mathrm{lcm}(d(C_{\sigma'}, 0); \ \sigma' \in \Sigma' \text{ with } \sigma' \subseteq \sigma)$$
holds for every $\sigma' \in \Sigma'$.
As $C_{\sigma'} \subseteq B_\sigma^{(i)}$ for some $0 \leq i \leq r$ holds for all $\sigma'\subseteq \sigma$ and $d(0,B_\sigma^{(i)})$ does not depend on the choice of $i$, we obtain \ldq$\geq$\rdq\ using Lemma \ref{lem:latticeDistanceMinimumLemma}. For the inequality \ldq$\leq$\rdq\ we distinguish between the two types of cones occurring in $\Sigma$. So, let $\sigma \in \Sigma$ be a leaf cone. Then $\sigma$ is not affected by the weakly tropical resolution, that means we have $\sigma \in \Sigma'$.
We conclude $$d(0,B_\sigma^{(i)}) = d(0,C_{\sigma}) = \mathrm{lcm}(d(C_{\sigma'}, 0); \ \sigma' \in \Sigma' \text{ with } \sigma' \subseteq \sigma).$$ 
Now let $\sigma \in \Sigma$ be a big cone. As $d(0,B_\sigma^{(i)})$ does not depend on the choice of $i$, 
using Lemma \ref{lem:latticeDistanceMinimumLemma} it suffices to prove
$$
d(0,B_\sigma^{(i)}) = \mathrm{lcm}(d(0, C_{\sigma(j)}); \ \sigma(j) := \lambda_j \cap \sigma \text{ for } j \in \left\{0, \ldots, r\right\} \text{ with } j \neq i).
$$
for a maximal big cone $\sigma$.
In this situation, by construction of the anticanonical complex, see \ref{constr:AKK}, we have $C_{\sigma(j)} = B_\sigma^{(i)}\cap\lambda_j $. 
Using Proposition \ref{prop:fullDimInterMaxBigCone}, we obtain $\mathrm{dim}(B_\sigma^{(i)} \cap \lambda_\mathrm{lin}) = s-1$ and as $\sigma$ is a big cone we have $$\mathrm{dim}(C_{\sigma(j)}) = \mathrm{dim}(B_\sigma^{(i)}\cap\lambda_j) = s.$$ In particular we can apply Lemma \ref{lem:AKKPartsToWholeRoof} which proves the claim.
\end{proof}

\section{Applications}
In this section we apply our results to almost homogeneous Fano varieties $X$, where \emph{almost homogeneous} means that the automorphism group of $X$ has an open orbit in $X$. On the basis of the classification of all $\QQ$-factorial rational almost homogeneous Fano varieties with reductive automorphism group having a maximal torus of dimension two obtained in \cite[Prop. 8.6]{ahhl14}, we give concrete bounds on the defining data depending on the Gorenstein index, see Propositions \ref{prop:set1}, \ref{prop:set2}, \ref{prop:set3}, \ref{prop:set4} and \ref{prop:set5}. This enables us to filter the varieties for those of small Picard number, see Corollary \ref{cor:Classification} for the cases of Gorenstein index one, two and three.

Any almost homogeneous $\QQ$-factorial Fano threefold of Picard number one with reductive automorphism group having a maximal torus of dimension two is either the variety No.~\ref{arzhantsev-et-al-Prop-8-6-iv} from Corollary \ref{cor:Classification}, which is of Gorenstein index one, or arises up to isomorphy from one of the Settings \ref{set:1}, \ref{set:2}, \ref{set:3}, \ref{set:4} and \ref{set:5}, where we list the defining matrices $A$ and $P$, the fan $\Sigma$ of the minimal ambient toric variety $Z_X$ and the vertices of the anticanonical complex: 

\begin{setting}\label{set:1}
We have $A := \begin{bmatrix}
    -1 & 1 & 0\\-1 & 0 & 1
\end{bmatrix}$ and 
$$P=[v_{01},v_{02}, v_{11}, v_{12}, v_{21}] = 
\begin{bmatrix}
-1 & -1 & 1& 1& 0\\
-1& -1& 0& 0& l_{21}\\
-1 & 0& 0& 1& 0\\
0 &0& 0& d_{12}& d_{21}
\end{bmatrix}
$$
where $l_{21} > 1$, $d_{12} > 2$ and $- \frac{d_{21}}{d_{12}-1}< l_{21} < -d_{21}$ and the maximal cones of the fan $\Sigma$ corresponding to the minimal ambient toric variety are given as
\begin{align*}
\begin{array}{l@{\quad}l}
\sigma_1 := \cone(v_{01},v_{02},v_{11},v_{21}), & 
\sigma_2 := \cone(v_{01},v_{02},v_{12},v_{21}), \\
\sigma_3 := \cone(v_{01},v_{11},v_{12},v_{21}), &
\sigma_4 := \cone(v_{02},v_{11},v_{12},v_{21}),
\end{array}
\end{align*}
each of these is a big cone. Moreover, $\Sigma$ contains the four elementary big cones, $\sigma_1\cap \sigma_3$, $\sigma_1\cap \sigma_4$, $\sigma_2\cap \sigma_3$ and $\sigma_2\cap \sigma_4$. 
The vertices of the anticanonical complex can then be calculated from these data using \cite[Cor. 6.5]{hw19}. These are given as the columns of $P$ together with the following points in the lineality space of the tropical variety $\mathrm{trop}(X)$:
\begin{align*}
\begin{array}{l@{\quad}l}
v_{\sigma_1\cap \sigma_3}' =\left(0, 0, -\frac{l_{21}}{1+l_{21}}, \frac{d_{21}}{1+l_{21}}
\right), & v_{\sigma_1\cap \sigma_4}' =\left(0, 0, 0, \frac{d_{21}}{1+l_{21}}\right),\\[.5cm]
v_{\sigma_2\cap \sigma_3}' =\left(0, 0, 0, \frac{d_{12} l_{21}+d_{21}}{1+l_{21}}\right),& v_{\sigma_2\cap \sigma_4}' =\left(0, 0, 
\frac{l_{21}}{1+l_{21}}, \frac{d_{12} l_{21}+d_{21}}{1+l_{21}}\right)
\end{array}
\end{align*}
\end{setting}

\begin{proposition}\label{prop:set1}
Let $X$ be a Fano variety arising from Setting \ref{set:1} and denote by $\iota_X$ its Gorenstein index. Then 
we have $2<d_{12}\leq 3\iota_X$ and $-\iota_X \leq k< 0$ such that
$$
\left(k d_{12}+\iota_X \right) l_{21}+\iota_X \mid \iota_X k^{2} d_{12} \quad \text{and} \quad k d_{21} = \iota_X (l_{21}+1).   
$$
In particular, for fixed Gorenstein index there are finitely many varieties arising via this setting.
\end{proposition}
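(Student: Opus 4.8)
The plan is to read off the Gorenstein index through Lemma~\ref{lem:GorensteinIndexViaDivCHiU} (equivalently via the boundary cells of $\mathcal A_X$ in Proposition~\ref{prop:mainProp}) and then to turn the resulting formula into the asserted arithmetic of $l_{21},d_{12},d_{21}$. By Lemma~\ref{lem:GorensteinIndexViaDivCHiU} we have $\iota_X=\mathrm{lcm}(c_\sigma;\ \sigma\in\Sigma)$, and since $c_\tau\mid c_\sigma$ for $\tau\preceq\sigma$ it suffices to evaluate $c_\sigma$ on the four maximal cones $\sigma_1,\dots,\sigma_4$. For each $\sigma_k$ I would take $i=0$, solve the linear system $\mathrm{div}(\chi^{u})|_{Z_{\sigma_k}}=D_Z^{(0)}|_{Z_{\sigma_k}}$ for the unique $u=u_k\in\QQ^4$ — i.e.\ prescribe $\langle u,v_\varrho\rangle$ on the rays $\varrho$ of $\sigma_k$ through the coefficients of $D_Z^{(0)}=-D_{v_{11}}-D_{v_{12}}-D_{v_{21}}$ — and read off $c_{\sigma_k}$ as the least $m\in\ZZ_{>0}$ with $mu_k\in\ZZ^4$. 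Writing $D:=l_{21}d_{12}+d_{21}$ (which is positive, since $l_{21}(d_{12}-1)>-d_{21}$ forces $D>l_{21}$), this computation should give
$$c_{\sigma_1}=c_{\sigma_4}=\frac{-d_{21}}{\gcd(-d_{21},\,l_{21}+1)},\qquad c_{\sigma_2}=c_{\sigma_3}=\frac{D}{\gcd(D,\,l_{21}+1)},$$
so that $\iota_X=\mathrm{lcm}(c_{\sigma_1},c_{\sigma_2})$.

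From $c_{\sigma_1}\mid\iota_X$ we get $-d_{21}\mid\iota_X(l_{21}+1)$, so $k:=\iota_X(l_{21}+1)/d_{21}\in\ZZ$; the sign constraints $\iota_X,l_{21}+1>0>d_{21}$ force $k<0$, and $l_{21}+1\le -d_{21}$ forces $-\iota_X\le k$, so $-\iota_X\le k<0$ and $kd_{21}=\iota_X(l_{21}+1)$. Substituting this relation yields the identity
$$(kd_{12}+\iota_X)l_{21}+\iota_X=kl_{21}d_{12}+\iota_X(l_{21}+1)=k(l_{21}d_{12}+d_{21})=kD,$$
so the asserted divisibility is equivalent to $D\mid\iota_X k\,d_{12}$. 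To prove this, set $g:=\gcd(D,l_{21}+1)$. As $l_{21}\equiv -1\pmod{l_{21}+1}$ we have $D\equiv d_{21}-d_{12}\pmod{l_{21}+1}$, hence $g=\gcd(l_{21}+1,d_{12}-d_{21})$ and $g\mid d_{12}-d_{21}$; on the other hand $g\mid l_{21}+1\mid \iota_X(l_{21}+1)=kd_{21}$, so $g\mid kd_{21}-k(d_{21}-d_{12})=kd_{12}$. Finally $c_{\sigma_2}=D/g\mid\iota_X$ gives $D\mid\iota_X g$, and together with $g\mid kd_{12}$ this gives $D\mid\iota_X k\,d_{12}$.

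For the bound, $c_{\sigma_2}\mid\iota_X$ gives $D\le\iota_X(l_{21}+1)$, while $-d_{21}=\iota_X(l_{21}+1)/|k|$; hence
$$l_{21}d_{12}=D+(-d_{21})\le \iota_X(l_{21}+1)\Bigl(1+\tfrac1{|k|}\Bigr)\le 2\iota_X(l_{21}+1),$$
so $d_{12}\le 2\iota_X(1+1/l_{21})\le 3\iota_X$ because $l_{21}\ge 2$, which with $d_{12}>2$ from the setting gives $2<d_{12}\le 3\iota_X$. Finiteness then follows: fixing $\iota_X$ leaves finitely many admissible $k$ and $d_{12}$, after which $(kd_{12}+\iota_X)l_{21}+\iota_X=kD$ divides the fixed number $\iota_X k^2d_{12}$, bounding $D$ and hence $l_{21}$ (since $D>l_{21}$), and $d_{21}=\iota_X(l_{21}+1)/k$ is then determined; so only finitely many triples $(l_{21},d_{12},d_{21})$, hence finitely many $X$, occur.

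I expect the genuinely laborious part to be the first step: setting up and solving the four linear systems for the $u_k$ while keeping track of signs and the primitivity of the resulting lattice normals, so that the $\gcd$-denominators of the $c_{\sigma_k}$ come out exactly as above. The only other point needing care is the $\gcd$-bookkeeping in the divisibility step, which rests on the observation $\gcd(D,l_{21}+1)=\gcd(l_{21}+1,d_{12}-d_{21})$.
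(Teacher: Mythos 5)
Your proof is correct and follows the same overall strategy as the paper's: extract the two divisibility relations $d_{21}\mid\iota_X(l_{21}+1)$ and $D:=l_{21}d_{12}+d_{21}\mid\iota_X(l_{21}+1)$ from the combinatorial Gorenstein criterion, define $k$ by $kd_{21}=\iota_X(l_{21}+1)$, and manipulate. Two deviations are worth recording. First, you compute local Cartier indices on the four maximal cones via Lemma~\ref{lem:GorensteinIndexViaDivCHiU}, whereas the paper applies Proposition~\ref{prop:mainProp} to the boundary cells $\conv(v_{21},v'_{\sigma_2\cap\sigma_3},v'_{\sigma_2\cap\sigma_4},0)$ and $\conv(v_{21},v'_{\sigma_1\cap\sigma_3},v'_{\sigma_1\cap\sigma_4},0)$; your hedged formulas do check out (solving the four linear systems gives $c_{\sigma_1}=c_{\sigma_4}=-d_{21}/\gcd(d_{21},l_{21}+1)$ and $c_{\sigma_2}=c_{\sigma_3}=D/\gcd(D,l_{21}+1,d_{12}-d_{21})$, and the last gcd collapses to $\gcd(D,l_{21}+1)$ precisely by your observation $D\equiv-(d_{12}-d_{21})\pmod{l_{21}+1}$), and they agree with the paper's lcm expressions. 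Second, your gcd bookkeeping proves the stronger statement $D\mid\iota_X k d_{12}$ directly, which both yields the asserted divisibility and bounds $D$ (hence $l_{21}$, via $D>l_{21}$) for the finiteness claim; this bypasses the paper's separate verification that $kd_{12}+\iota_X\neq0$, which the paper needs because it bounds $l_{21}$ through the linear form $(kd_{12}+\iota_X)l_{21}+\iota_X$ rather than through $D$ itself. The remaining steps (the range of $k$ and the bound $2<d_{12}\le3\iota_X$) coincide with the paper's.
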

\begin{proof}
Due to the structure of the defining fan $\Sigma$ of $Z_X$ we obtain that $\conv(v_{21},v_{\sigma_2\cap\sigma_3}',v_{\sigma_2\cap\sigma_4}',0)$ is a cell in its anticanonical complex $\mathcal{A}_X$, and using Proposition \ref{prop:mainProp} we obtain
\begin{align*}
d(\mathrm{aff}(v_{21},&v_{\sigma_2\cap\sigma_3}',v_{\sigma_2\cap\sigma_4}'),0)\\ &= \lcm\left(\frac{d_{12} l_{21}+d_{21}}{\gcd(d_{12} l_{21}+d_{21},1+l_{21})}, \frac{d_{12} l_{21}+d_{21}}{\gcd(d_{12} l_{21}+d_{21},d_{12}-d_{21})}\right) \mid \iota_X.
\end{align*}
In particular, this implies $d_{12} l_{21}+d_{21}\mid \iota_X (1+l_{21})$ and thus
\begin{align}
d_{12} l_{21}+d_{21}\leq \iota_X (1+l_{21}).\label{eq:(i)ineq}
\end{align}

Similarly, since $\conv(v_{21},v_{\sigma_1\cap\sigma_3}',v_{\sigma_1\cap\sigma_4}',0)\in \mathcal{A}_X$, we see that
$$ d(\mathrm{aff}(v_{21},v_{\sigma_1\cap\sigma_3}',v_{\sigma_1\cap\sigma_4}'),0) = \frac{d_{21}}{\gcd(d_{21},l_{21}+1)} \mid \iota_X.$$
In particular, there exists some $k\in\ZZ$ such that 
\begin{align}
k d_{21} = \iota_X (l_{21}+1).\label{eq:(i)d21}
\end{align}
Note that because of $l_{21} < -d_{21}$, we have $-\iota_X \leq k < 0$. Inserting this into (\ref{eq:(i)ineq}) yields
\begin{align*}
d_{12}\leq \frac{\iota_X (1+l_{21}) - d_{21}}{l_{21}} \leq \frac{2\iota_X (1+l_{21})}{l_{21}} \leq 3\iota_X.
\end{align*}

We notice that $- \frac{d_{21}}{d_{12}-1}< l_{21} $ and the identity (\ref{eq:(i)d21}) ensure that $-kd_{12}\neq \iota_X$: If otherwise $-kd_{12} = \iota_X$, then $-\frac{d_{21}}{d_{12}-1} = -\frac{\iota_X (l_{21}+1)}{k (d_{12}-1)} = \frac{d_{12}}{d_{12}-1}(l_{21}+1)>l_{21}$.

Once again, we consider $d_{12} l_{21}+d_{21}\mid \iota_X (1+l_{21})$. Using (\ref{eq:(i)d21}) we infer
\begin{align*}
\left(k d_{12}+\iota_X \right) l_{21}+\iota_X \mid k \iota_X  l_{21}+k \iota_X
\end{align*}
Since $\left(k d_{12}+\iota_X \right)\neq 0$ as seen above, we have 
\begin{align*}
k \iota_X  l_{21}+k \iota_X &\mid (k \iota_X  l_{21}+k \iota_X)\left(k d_{12}+\iota_X \right).
\end{align*}
Therefore,
\begin{align*}
\left(k d_{12}+\iota_X \right) l_{21}+\iota_X \mid\   &(k \iota_X  l_{21}+k \iota_X)\left(k d_{12}+\iota_X \right) - k \iota_X \left(\left(k d_{12}+\iota_X \right) l_{21}+\iota_X\right) \\&= \iota_X k^{2} d_{12}.
\end{align*}
Thus, for fixed $\iota_X$ there are finitely many possibilities for $d_{12}$ and $k$. For each of these there are only finitely many possibilities for $l_{21}$ and thus also for $d_{21}$ by Equation~(\ref{eq:(i)d21}).
\end{proof}

\begin{setting}\label{set:2}
We have $A := \begin{bmatrix}
    -1 & 1 & 0\\-1 & 0 & 1
\end{bmatrix}$ and 
$$P=[v_{01}, v_{11}, v_{12}, v_{21}, v_{22}] = 
\begin{bmatrix}
-2 & 1& 1& 0& 0\\
-2 &0 &0& l_{21} &l_{22}\\
-1 &0& 1& 0 &0\\
d_{01} &0 &0& d_{21} &d_{22}
\end{bmatrix}
$$
where $l_{21}, l_{22} > 1, 2d_{22} > -d_{01}l_{22}, -2d_{21} > d_{01}l_{21}$. and the maximal cones of the fan $\Sigma$ corresponding to the minimal ambient toric variety are given as
\begin{align*}
\begin{array}{l@{\quad}l}
\sigma_1 := \cone(v_{01},v_{11},v_{12},v_{21}), & 
\sigma_2 := \cone(v_{01},v_{11},v_{12},v_{22}), \\
\sigma_3 := \cone(v_{01},v_{11},v_{21},v_{22}), &
\sigma_4 := \cone(v_{01},v_{12},v_{21},v_{22}),
\end{array}
\end{align*}
each of these is a big cone. Moreover, $\Sigma$ contains the four elementary big cones, $\sigma_1\cap \sigma_3$, $\sigma_1\cap \sigma_4$, $\sigma_2\cap \sigma_3$ and $\sigma_2\cap \sigma_4$. 
The vertices of the anticanonical complex can then be calculated from these data using \cite[Cor. 6.5]{hw19}. These are given as the columns of $P$ together with the following points in the lineality space of the tropical variety $\mathrm{trop}(X)$:
\begin{align*}
\begin{array}{l@{\quad}l}
v_{\sigma_1\cap \sigma_3}' = \left(0, 0, -\frac{l_{21}}{2+l_{21}}, 
\frac{d_{01} l_{21}+2 d_{21}}{2+l_{21}}\right),
& v_{\sigma_1\cap \sigma_4}' =\left(0, 0, 
\frac{l_{21}}{2+l_{21}}, 
\frac{d_{01} l_{21}+2 d_{21}}{2+l_{21}}\right) ,\\[.5cm] v_{\sigma_2\cap \sigma_3}' =\left(0, 0, 
-\frac{l_{22}}{2+l_{22}}, 
\frac{d_{01} l_{22}+2 d_{22}}{2+l_{22}}\right) , & v_{\sigma_2\cap \sigma_4}' =\left(0, 0, 
\frac{l_{22}}{2+l_{22}}, 
\frac{d_{01} l_{22}+2 d_{22}}{2+l_{22}}\right)
\end{array}
\end{align*}
\end{setting}

\begin{proposition}\label{prop:set2}
Let $X$ be a Fano variety arising from Setting \ref{set:2} and denote by $\iota_X$ its Gorenstein index. Then we end up in one of the following cases:
\begin{enumerate}
    \item $d_{01}=0$, $l_{21}=l_{22}\mid \iota_X$, $2\mid \iota_X$, $d_{21}\mid (2+l_{21})\frac{\iota_X}{2}$ and $d_{22}\mid (2+l_{22})\frac{\iota_X}{2}$.
    \item
    $d_{01}=0$, $l_{21}>l_{22}$, $2\mid \iota_X$, $1<l_{22}<\iota_X$, $0<d_{22}<\iota_X$ and $1\leq k <\iota_X$ such that
\begin{align*}
\frac{d_{21}}{\gcd(d_{21},d_{22})} \mid \frac{\iota_X}{2}+k \quad \text{and} \quad k(l_{21} d_{22}-d_{21} l_{22})=\iota_X(d_{22}-d_{21}),
\end{align*}
\item $d_{01}=-1$, $1<l_{21}<4\iota_X$, $0<s$, $s\mid \iota_X(l_{21}+2)$, $0<k\leq \iota_X$ and $0<t$ such that
\begin{align*}
t \mid 2\iota_X^2s+2ks\iota_X \quad \text{and} \quad k(tl_{21}+sl_{22})=2\iota_X(t+s),
\end{align*}
where $d_{21}=\frac{l_{21}-s}{2}$ and $d_{22}=\frac{l_{22}+t}{2}$; or the same with $(s,l_{21})$ and $(t,l_{22})$ interchanged.
\end{enumerate}
In particular, for fixed Gorenstein index there are finitely many varieties arising via this setting.
\end{proposition}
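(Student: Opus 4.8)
The plan is to carry out the argument of Proposition~\ref{prop:set1} in this slightly richer situation: read off the maximal cells of $\partial\mathcal{A}_X$ from the data of Setting~\ref{set:2}, compute their lattice distances to the origin, and then turn the conclusion of Proposition~\ref{prop:mainProp} --- that every $d(0,F)$ divides $\iota_X$ and that $\iota_X$ is their least common multiple --- into a system of arithmetic constraints on $l_{21},l_{22},d_{01},d_{21},d_{22}$.

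Since all four cones $\sigma_1,\dots,\sigma_4$ are maximal big cones, the construction of $\mathcal{A}_X$ (Construction~\ref{constr:AKK}) together with the decomposition used in the proof of Proposition~\ref{prop:mainProp} (via Proposition~\ref{prop:fullDimInterMaxBigCone} and Lemma~\ref{lem:AKKPartsToWholeRoof}) shows that the maximal cells of $\partial\mathcal{A}_X$ are, for each $\sigma_i$, the three $2$-dimensional pieces in which the roof over $\sigma_i$ meets the leaves $\lambda_0,\lambda_1,\lambda_2$. Over $\sigma_1$ these are $\conv(v_{01},v_{\sigma_1\cap\sigma_3}',v_{\sigma_1\cap\sigma_4}')$, $\conv(v_{11},v_{12},v_{\sigma_1\cap\sigma_3}',v_{\sigma_1\cap\sigma_4}')$ and $\conv(v_{21},v_{\sigma_1\cap\sigma_3}',v_{\sigma_1\cap\sigma_4}')$; over $\sigma_2$ the analogues with the index $2$; and over $\sigma_3,\sigma_4$ cells such as $\conv(v_{21},v_{22},v_{\sigma_1\cap\sigma_3}',v_{\sigma_2\cap\sigma_3}')$ together with the triangles $\conv(v_{01},v_{\sigma_1\cap\sigma_3}',v_{\sigma_2\cap\sigma_3}')$ and $\conv(v_{11},v_{\sigma_1\cap\sigma_3}',v_{\sigma_2\cap\sigma_3}')$. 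Each such cell lies in one of the three rank-$3$ leaf subspaces $\lin(\lambda_j)$, so by Lemma~\ref{lem:latticeDistanceMinimumLemma} computing $d(0,F)$ reduces to determining the primitive integral functional constant on $F$ and evaluating it. One obtains in this way $d(0,\conv(v_{21},v_{\sigma_1\cap\sigma_3}',v_{\sigma_1\cap\sigma_4}'))=\frac{|d_{01}l_{21}+2d_{21}|}{\gcd(d_{21}-d_{01},\,2+l_{21})}$ and its index-$2$ counterpart, distances featuring $|d_{22}l_{21}-d_{21}l_{22}|$ in the numerator from the $\lambda_2$-pieces over $\sigma_3$ and $\sigma_4$ --- which degenerate to $l_{21}$ precisely when $d_{01}=0$ and $l_{21}=l_{22}$ --- while the remaining cells add no constraints beyond consequences of the ones just listed.

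The decisive reduction is a normalisation by admissible operations on $P$. Adding $\beta$ times the second $P_0$-row to the last row of $P$ leaves $X$ unchanged up to isomorphism and replaces $(d_{01},d_{21},d_{22})$ by $(d_{01}-2\beta,\,d_{21}+\beta l_{21},\,d_{22}+\beta l_{22})$, so according to the parity of $d_{01}$ we may assume $d_{01}\in\{0,-1\}$; after possibly interchanging the roles of the two columns of the last block (and the corresponding entries $l_{21},d_{21}$ and $l_{22},d_{22}$) this leaves the three cases $d_{01}=0$ with $l_{21}=l_{22}$, $d_{01}=0$ with $l_{21}>l_{22}$, and $d_{01}=-1$. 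In each of them I would substitute the explicit distances into $d(0,F)\mid\iota_X$ and $\iota_X=\mathrm{lcm}(d(0,F);\ F\in\partial\mathcal{A}_X)$ and argue as in Proposition~\ref{prop:set1}: a divisibility of the shape $d_{01}l_{21}+2d_{21}\mid\iota_X(2+l_{21})$ (and its index-$2$ analogue) produces a bounded integer $k$; the bounds $l_{21}<4\iota_X$, respectively $l_{22}<\iota_X$ and $d_{22}<\iota_X$, fall out of these divisibilities just as $d_{12}\le3\iota_X$ did in Proposition~\ref{prop:set1}; reinserting $k$ turns the relation carrying $|d_{22}l_{21}-d_{21}l_{22}|$ into the identities $k(l_{21}d_{22}-d_{21}l_{22})=\iota_X(d_{22}-d_{21})$ respectively $k(tl_{21}+sl_{22})=2\iota_X(t+s)$, and the remaining divisibility becomes $\frac{d_{21}}{\gcd(d_{21},d_{22})}\mid\frac{\iota_X}{2}+k$ respectively $t\mid2\iota_X^2s+2ks\iota_X$. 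In the degenerate case $d_{01}=0$, $l_{21}=l_{22}$ the distance $l_{21}$ forces $l_{21}\mid\iota_X$ directly, while the two $\lambda_2$-distances $\frac{2|d_{21}|}{\gcd(d_{21},2+l_{21})}$ and $\frac{2d_{22}}{\gcd(d_{22},2+l_{22})}$, being even, give $2\mid\iota_X$ and then $d_{21}\mid(2+l_{21})\frac{\iota_X}{2}$ and $d_{22}\mid(2+l_{22})\frac{\iota_X}{2}$. Finiteness for fixed $\iota_X$ follows exactly as in Proposition~\ref{prop:set1}, since only finitely many pairs $(k,l_{21})$ or triples $(k,s,t)$ can occur and each admits only finitely many completions.

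The step I expect to be the main obstacle is the bookkeeping behind the lattice-distance formulas: checking that the four vertices of each quadrilateral cell are genuinely coplanar (so that the cell is a maximal face of $\partial\mathcal{A}_X$ of the predicted dimension $2$), determining the primitive integral normals inside each $\lin(\lambda_j)\cong\QQ^3$, and keeping the various $\gcd$-expressions --- such as $\gcd(d_{21}-d_{01},2+l_{21})$ and $\gcd(d_{21},d_{22})$ --- under control so that they can be absorbed cleanly into the divisibilities across all three cases. The conceptual ingredients, namely Proposition~\ref{prop:mainProp}, Lemmas~\ref{lem:AKKPartsToWholeRoof} and~\ref{lem:latticeDistanceMinimumLemma}, Proposition~\ref{prop:fullDimInterMaxBigCone}, and the admissible-operations normalisation, are all already in hand.
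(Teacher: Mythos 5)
Your proposal follows essentially the same route as the paper's own proof: normalise $d_{01}\in\{0,-1\}$ by adding a multiple of the second row of $P_0$ to the last row, split into the cases $l_{21}=l_{22}$, $l_{21}>l_{22}$ and $d_{01}=-1$, read off the relevant boundary cells of $\mathcal{A}_X$ leaf by leaf, and convert the lattice-distance divisibilities from Proposition~\ref{prop:mainProp} into the stated arithmetic constraints; the explicit distance formulas you quote (e.g.\ $|d_{01}l_{21}+2d_{21}|/\gcd(d_{21}-d_{01},2+l_{21})$ and the expressions involving $l_{21}d_{22}-d_{21}l_{22}$) agree with those in the paper. The remaining work you defer as bookkeeping is exactly the elimination carried out there, so the plan is sound.
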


\begin{proof}
By suitable subtracting the first row from the last one, we can reach $d_{01}\in \{0,-1\}$. We start with $d_{01}=0$. In this case the conditions change to $l_{21}, l_{22} > 1, d_{22} > 0$ and $d_{21} < 0$. Let $l_{21}\geq l_{22}$ without loss of generality due to admissible operations. 

Due to the structure of the defining fan $\Sigma$ of $Z_X$ we obtain that $\conv(v_{21}, v_{22},v_{\sigma_1\cap\sigma_4}',v_{\sigma_2\cap\sigma_4}',0)$ is a cell in $\mathcal{A}_X$, and using Proposition \ref{prop:mainProp} we obtain 
\begin{align*}
d(\mathrm{aff}(v_{21},&v_{22},v_{\sigma_1\cap\sigma_4}',v_{\sigma_2\cap\sigma_4}'),0) = \frac{l_{22} d_{21}-d_{22} l_{21}}{\gcd(l_{22} d_{21}-d_{22} l_{21},d_{21}-d_{22})}=l_{21}=l_{22} \mid \iota_X. 
\end{align*}
Hence, $l_{21}$ and $l_{22}$ are bounded. Using again the structure of the defining fan $\Sigma$ of $Z_X$ we obtain that  $\conv(v_{01},v_{\sigma_1\cap\sigma_3}',v_{\sigma_1\cap\sigma_4}',0)$ and $\conv(v_{01},v_{\sigma_2\cap\sigma_3}',v_{\sigma_2\cap\sigma_4}',0)$ are cells of $\mathcal{A}_X$, and with Proposition \ref{prop:mainProp} we obtain 
\begin{align*}
d(\mathrm{aff}(v_{01},v_{\sigma_1\cap\sigma_3}',v_{\sigma_1\cap\sigma_4}'),0)  = \lcm\left(2, \frac{2d_{21}}{\gcd(2d_{21}, 2+l_{21})} \right) \mid \iota_X,\\
d(\mathrm{aff}(v_{01},v_{\sigma_2\cap\sigma_3}',v_{\sigma_2\cap\sigma_4}'),0)  = \lcm\left(2, \frac{2d_{22}}{\gcd(2d_{22}, 2+l_{22})} \right) \mid \iota_X.
\end{align*}
In particular, using $2\mid \iota_X$, this leads to the desired $d_{21}\mid (2+l_{21})\frac{\iota_X}{2}$ and $d_{22}\mid (2+l_{22})\frac{\iota_X}{2}$.

Secondly, let $l_{21}>l_{22}$. As above, we obtain
\begin{align*}
d(\mathrm{aff}(v_{21},&v_{22},v_{\sigma_1\cap\sigma_4}',v_{\sigma_2\cap\sigma_4}'),0) \\ &= \lcm\left(\frac{l_{21} d_{22}-d_{21} l_{22}}{\gcd(l_{21} d_{22}-d_{21} l_{22}, l_{21}-l_{22})}, \frac{l_{21} d_{22}-d_{21} l_{22}}{\gcd(l_{21} d_{22}-d_{21} l_{22},d_{22}-d_{21})} \right) \mid \iota_X. 
\end{align*}
In particular, this implies $l_{21} d_{22}-d_{21} l_{22} \mid \iota_X(l_{21}-l_{22})$ and $l_{21} d_{22}-d_{21} l_{22} \mid \iota_X(d_{22}-d_{21})$. Thus we have
\begin{align*}    
\frac{\iota_X(l_{21}-l_{22})}{l_{21} d_{22}-d_{21} l_{22}} \geq 1 \quad \text{and} \quad
\frac{\iota_X(d_{22}-d_{21})}{l_{21} d_{22}-d_{21} l_{22}} \geq 1,
\end{align*}
which yields
\begin{align*}
(d_{22}-d_{21})l_{22} \leq (\iota_X-d_{22})(l_{21}-l_{22}) \quad
\text{and} \quad  d_{22} (l_{21}-l_{22}) \leq (\iota_X-l_{22})(d_{22}-d_{21}).  
\end{align*}
 Since $(d_{22}-d_{21})l_{22} >0$ and $d_{22} (l_{21}-l_{22})>0$, we infer $d_{22}<\iota_X$ and $l_{22}<\iota_X$ by using $l_{21}-l_{22}>0$ and $d_{22}-d_{21}>0$. 

Once again, we consider $l_{21} d_{22}-d_{21} l_{22} \mid \iota_X(d_{22}-d_{21})$. In particular, there exists some $k\in \ZZ$ with $k\geq 1$ such that 
\begin{align}
    k(l_{21} d_{22}-d_{21} l_{22})=\iota_X(d_{22}-d_{21}). \label{eq0: (ii)}
\end{align}
Note that because of $l_{22}, l_{21}>1$, we have $l_{21} d_{22}-d_{21} l_{22}>d_{22}-d_{21}$ and thus $1\leq k <\iota_X$. Using (\ref{eq0: (ii)}) we obtain
\begin{align*}
(\iota_X-kl_{22})d_{21}=(\iota_X-kl_{21})d_{22}
\end{align*}
and thus $d_{21} \mid (\iota_X-kl_{21})d_{22}$. Due to the structure of the defining fan $\Sigma$ of $Z_X$ we obtain that  $\conv(v_{01},v_{\sigma_1\cap\sigma_3}',v_{\sigma_1\cap\sigma_4}',0)$ is a cell in $\mathcal{A}_X$, and using Proposition \ref{prop:mainProp} we obtain 
\begin{align*}
d(\mathrm{aff}(v_{01},v_{\sigma_1\cap\sigma_3}',v_{\sigma_1\cap\sigma_4}'),0)  = \lcm\left(2, \frac{2d_{21}}{\gcd(2d_{21}, 2+l_{21})} \right) \mid \iota_X.
\end{align*}
In particular, this implies $2d_{21}\mid \iota_X(2+l_{21})$ and $2\mid \iota_X$. Thus, we have $d_{21} \mid \iota_X+\frac{\iota_X}{2}l_{21}$. For $\tilde{d}_{21}:=\frac{d_{21}}{\gcd(d_{21},d_{22})}$ we infer $\tilde{d}_{21} \mid \iota_X+\frac{\iota_X}{2}l_{21}$ and $\tilde{d}_{21}\mid \iota_X -kl_{21}$, and thus
\begin{align*}
\tilde{d}_{21} \mid  \iota_X+\frac{\iota_X}{2}l_{21}-(\iota_X -kl_{21})=\left(\frac{\iota_X}{2}+k\right)l_{21}.
\end{align*}
Since $d_{21}$ and $l_{21}$ are coprime, we obtain $\tilde{d}_{21} \mid \frac{\iota_X}{2}+k$.
Thus, for fixed $\iota_X$ there are finitely many possibilities for $d_{22},l_{22}, \tilde{d}_{21}$ and $k$. For each of these there are only finitely many possibilities for $d_{21}$ and thus also for $l_{21}$ by (\ref{eq0: (ii)}).

We continue with $d_{01}=-1$. In this case the conditions change to $l_{21}, l_{22} > 1, -l_{22}+2d_{22}> 0$ and $l_{21}-2d_{21} > 0$. This yields $l_{21}d_{22}-l_{22}d_{21}>0$. We set 
\begin{align}
s:= l_{21}-2d_{21} \in \ZZ_{\geq 1} \quad \text{ and } \quad t:= -l_{22}+2d_{22} \in \ZZ_{\geq 1}. \label{s and t: (ii)}
\end{align}
Then we have
\begin{align*}
2(l_{21}d_{22}-l_{22}d_{21})=l_{21}(l_{22}+t)-l_{22}(l_{21}-s)=tl_{21}+sl_{22}.
\end{align*}
Due to the structure of the defining fan $\Sigma$ of $Z_X$ we obtain that $\conv(v_{01},v_{\sigma_1\cap\sigma_3}',v_{\sigma_2\cap\sigma_3}',0)$ is a cell in $\mathcal{A}_X$, and using Proposition \ref{prop:mainProp} we obtain 
\begin{align*}
d(\mathrm{aff}&(v_{01},v_{\sigma_1\cap\sigma_3}',v_{\sigma_2\cap\sigma_3}'),0)  \\ &= \lcm\left(\frac{d_{22}-d_{21}}{\gcd\left(l_{21}d_{22}-l_{22}d_{21},d_{22}-d_{21}\right)},\frac{l_{21}-l_{22}}{\gcd\left(l_{21}d_{22}-l_{22}d_{21},l_{21}-l_{22}\right)}\right) \mid \iota_X.
\end{align*}
This implies $l_{21}d_{22}-l_{22}d_{21} \mid \iota_X(l_{21}-l_{22})$ and $l_{21}d_{22}-l_{22}d_{21} \mid \iota_X(d_{22}-d_{21})$, and thus
\begin{align*}
l_{21}d_{22}-l_{22}d_{21} \mid \iota_X((l_{21}-l_{22})+2(d_{22}-d_{21}))=\iota_X(s+t).
\end{align*}
In particular, this yields
\begin{align}
tl_{21}+sl_{22}=2(l_{21}d_{22}-l_{22}d_{21}) \mid 2\iota_X(t+s), \label{tl_21+sl_22: (ii)}
\end{align}
and because of $l_{21}d_{22}-l_{22}d_{21}>0$ and $t+s>0$ we obtain $\frac{tl_{21}+sl_{22}}{t+s}\leq 2\iota_X$. 

We first consider $t\geq s$. Then we have 
\begin{align*}
\frac{l_{21}}{2}< \frac{tl_{21}+sl_{22}}{t+s} \leq 2\iota_X,
\end{align*}
and this yields $l_{21}<4\iota_X$. 
Due to the structure of the defining fan $\Sigma$ of $Z_X$ we obtain that 
$\conv(v_{11},v_{12}, v_{\sigma_1\cap\sigma_3}',v_{\sigma_1\cap\sigma_4}',0)$ is a cell in $\mathcal{A}_X$, and using Proposition \ref{prop:mainProp} we obtain 
\begin{align*}
d(\mathrm{aff}(v_{11},v_{12},v_{\sigma_1\cap\sigma_3}',v_{\sigma_1\cap\sigma_4}'),0)  = \lcm\left(\frac{s}{\gcd\left(s,l_{21}+2\right)}\right) \mid \iota_X.
\end{align*}
In particular, this implies $s\mid \iota_X(l_{21}+2)$, and $l_{21}<4\iota_X$ yields $s< 4\iota_X^2+2\iota_X$.
Similarly, since $\conv(v_{11},v_{12}, v_{\sigma_2\cap\sigma_3}',v_{\sigma_2\cap\sigma_4}',0) \in \mathcal{A}_X$ , we see that
\begin{align*}
d(\mathrm{aff}(v_{11},v_{12},v_{\sigma_2\cap\sigma_3}',v_{\sigma_2\cap\sigma_4}'),0)  = \lcm\left(\frac{t}{\gcd\left(t,l_{22}+2\right)}\right) \mid \iota_X.  
\end{align*}
In particular, this implies $t\mid \iota_X(l_{22}+2)$. Furthermore, due to \ref{tl_21+sl_22: (ii)} there exists some $k\in \ZZ$ such that
\begin{align}
k(tl_{21}+sl_{22})=2\iota_X(t+s). \label{eq-1: (ii)}
\end{align}
Note that because of $l_{21}, l_{22}>1$, we have $tl_{21}+sl_{22}\geq 2(s+t)$, and thus $0<k\leq \iota_X$. Using (\ref{eq-1: (ii)}) yields
\begin{align*}
 t(kl_{21}-2\iota_X)=2\iota_X s-ksl_{22},   
\end{align*}
so we get $t\mid 2\iota_X s-ksl_{22}$. Using $t\mid \iota_X(l_{22}+2)$ we obtain
\begin{align*}
t\mid \iota_X(2\iota_X s-ksl_{22}) +ks\iota_X(l_{22}+2)=2\iota_X^2s+2ks\iota_X.
\end{align*}
Thus, for fixed $\iota_X$ there are finitely many possibilities for $l_{21}, s,t$ and $k$. For each of these there are only finitely many possibilities for $l_{22}$ due to (\ref{eq-1: (ii)}).

For the case where $s >t$, one follows the same arguments as above with $(s,l_{21})$ and $(t,l_{22})$ interchanged.
In both cases, $d_{21}$ and $d_{22}$ are obtained by (\ref{s and t: (ii)}).
\end{proof}

\begin{setting}\label{set:3}
We have $A := \begin{bmatrix}
    -1 & 1 & 0\\-1 & 0 & 1
\end{bmatrix}$ and 
$$P=[v_{01}, v_{11}, v_{12}, v_{21}, v_{22}] = 
\begin{bmatrix}
-2 &1 &1 &0& 0\\
-2& 0& 0& 1& l_{22}\\
-1& 0& 1& 0& 0\\
d_{01}& 0& 0& d_{21}& d_{22}
\end{bmatrix}
$$
where $l_{22} > 1, d_{22} > d_{21}l_{22} + l_{22}, 2d_{22} > -d_{01}l_{22}, -2d_{21}> d_{01}$ and the maximal cones of the fan $\Sigma$ corresponding to the minimal ambient toric variety are given as
\begin{align*}
\begin{array}{l@{\quad}l}
\sigma_1 := \cone(v_{01},v_{11},v_{12},v_{21}), & 
\sigma_2 := \cone(v_{01},v_{11},v_{12},v_{22}), \\
\sigma_3 := \cone(v_{01},v_{11},v_{21},v_{22}), &
\sigma_4 := \cone(v_{01},v_{12},v_{21},v_{22}),
\end{array}
\end{align*}
each of these is a big cone. Moreover, $\Sigma$ contains the four elementary big cones, $\sigma_1\cap \sigma_3$, $\sigma_1\cap \sigma_4$, $\sigma_2\cap \sigma_3$ and $\sigma_2\cap \sigma_4$. 
The vertices of the anticanonical complex can then be calculated from these data using \cite[Cor. 6.5]{hw19}. These are given as the columns of $P$ together with the following points in the lineality space of the tropical variety $\mathrm{trop}(X)$:
\begin{align*}
\begin{array}{l@{\quad}l}
v_{\sigma_1\cap \sigma_3}' =\left(0, 0, -{\frac{1}{3}}, 
\frac{d_{01}}{3}+\frac{2 d_{21}}{3}\right),
& v_{\sigma_1\cap \sigma_4}' =\left(0, 0, {
\frac{1}{3}}, \frac{d_{01}}{3}+\frac{2 d_{21}}{3}\right),\\[.5cm] v_{\sigma_2\cap \sigma_3}' =\left(0, 0, 
-\frac{l_{22}}{2+l_{22}}, 
\frac{d_{01} l_{22}+2 d_{22}}{2+l_{22}}\right), & v_{\sigma_2\cap \sigma_4}' =\left(0, 0, \frac{l_{22}}{2+l_{22}}, 
\frac{d_{01} l_{22}+2 d_{22}}{2+l_{22}}\right)
\end{array}
\end{align*}
\end{setting}

\begin{proposition}\label{prop:set3}
Let $X$ be a Fano variety arising from Setting \ref{set:2} and denote by $\iota_X$ its Gorenstein index. Then we have $d_{21}=0$, $-3\iota_X \leq d_{01}<0$, $-3\iota_X \leq k_{01}<0$ and $0 < k_{22}<\iota_X$ such that
\begin{align*}
\iota_X\left( \frac{3}{k_{01}}+\frac{2}{k_{22}}\right)l_{22}-\frac{2\iota_X}{k_{22}} \mid 6 \iota_X(k_{22}+k_{01}) \quad \text{and} \quad d_{22} k_{22} = \iota_X (l_{22}-1).
\end{align*}
In particular, for fixed Gorenstein index there are finitely many varieties arising via this setting.
\end{proposition}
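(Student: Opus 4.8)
The plan is to first normalise so that $d_{21}=0$, and then to read the two claimed relations off the lattice distances of three boundary cells of $\mathcal{A}_X$ by means of Proposition \ref{prop:mainProp}.

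First I would use the admissible operation of adding an integer multiple $\gamma$ of the second row $(-2,0,0,1,l_{22})$ of $P_0$ to the last row of $P$: this leaves the columns $v_{11},v_{12}$ and all combinatorial data unchanged and replaces $(d_{01},d_{21},d_{22})$ by $(d_{01}-2\gamma,\,d_{21}+\gamma,\,d_{22}+\gamma l_{22})$, hence produces an equivariantly isomorphic variety. Taking $\gamma=-d_{21}$ we may assume $d_{21}=0$; the defining inequalities of Setting \ref{set:3} then become $l_{22}>1$, $d_{22}>l_{22}$, $d_{01}<0$ and $P:=d_{01}l_{22}+2d_{22}>0$, while the lineality-space vertices of $\mathcal{A}_X$ are those of Setting \ref{set:3} specialised to $d_{21}=0$.

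Next, from the shape of $\Sigma$, the polytopes
$$F_1:=\conv\!\big(v_{21},v_{\sigma_1\cap\sigma_3}',v_{\sigma_1\cap\sigma_4}'\big),\quad F_2:=\conv\!\big(v_{21},v_{22},v_{\sigma_1\cap\sigma_3}',v_{\sigma_2\cap\sigma_3}'\big),\quad F_3:=\conv\!\big(v_{22},v_{\sigma_2\cap\sigma_3}',v_{\sigma_2\cap\sigma_4}'\big)$$
are maximal cells of $\partial\mathcal{A}_X$, namely the roofs over $\sigma_1\cap\lambda_2$, $\sigma_3\cap\lambda_2$ and $\sigma_2\cap\lambda_2$. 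Computing a primitive normal of each affine hull inside the relevant coordinate $3$-space and simplifying the gcd's that occur — repeatedly using facts such as $\gcd(d_{22},l_{22}-1)\mid 3d_{22}+d_{01}(l_{22}-1)$ — I expect
$$d(0,F_1)=\frac{-d_{01}}{\gcd(-d_{01},3)},\qquad d(0,F_2)=\frac{d_{22}}{\gcd(d_{22},l_{22}-1)},\qquad d(0,F_3)=\frac{P}{\gcd(d_{22}-d_{01},\,l_{22}+2)}.$$
By Proposition \ref{prop:mainProp} each of these divides $\iota_X$. The first yields $-d_{01}\mid 3\iota_X$, so $k_{01}:=3\iota_X/d_{01}\in\ZZ$, and $-d_{01}\ge 1$ forces $-3\iota_X\le k_{01}<0$ and $-3\iota_X\le d_{01}<0$. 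The second yields $d_{22}\mid\iota_X(l_{22}-1)$, so $k_{22}:=\iota_X(l_{22}-1)/d_{22}\in\ZZ$ with $d_{22}k_{22}=\iota_X(l_{22}-1)$, and $d_{22}>l_{22}-1$ forces $0<k_{22}<\iota_X$. The third, using $\gcd(d_{22}-d_{01},l_{22}+2)\mid (l_{22}+2)$, yields $P\mid\iota_X(l_{22}+2)$.

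Finally I would assemble the divisibility. Substituting $d_{01}=3\iota_X/k_{01}$ and $d_{22}=\iota_X(l_{22}-1)/k_{22}$ gives the identity
$$P\ =\ d_{01}l_{22}+2d_{22}\ =\ \iota_X\!\left(\frac{3}{k_{01}}+\frac{2}{k_{22}}\right)l_{22}-\frac{2\iota_X}{k_{22}},$$
which is exactly the left-hand side of the asserted relation. Multiplying $P=d_{01}l_{22}+2d_{22}$ by $k_{01}k_{22}$ and using $d_{01}k_{01}=3\iota_X$ and $d_{22}k_{22}=\iota_X(l_{22}-1)$ shows $P\mid\iota_X\big(l_{22}(3k_{22}+2k_{01})-2k_{01}\big)$; combining this with $P\mid\iota_X(l_{22}+2)(3k_{22}+2k_{01})$ (which follows from $P\mid\iota_X(l_{22}+2)$) and subtracting, the $l_{22}$-terms cancel and $P\mid 6\iota_X(k_{22}+k_{01})$ remains. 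For finiteness, $k_{01}$ and $k_{22}$ range over finite sets once $\iota_X$ is fixed; for each such pair, $P$ is an affine function of $l_{22}$ with nonzero leading coefficient (vanishing would force $P=-2\iota_X/k_{22}<0$) and $6\iota_X(k_{22}+k_{01})$ is a nonzero constant ($k_{22}+k_{01}=0$ would force $P=-\iota_X(l_{22}+2)/k_{22}<0$), so $P>0$ together with $P\mid 6\iota_X(k_{22}+k_{01})$ bounds $l_{22}$; then $d_{01}$ and $d_{22}$ are determined.

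The bulk of the work is the explicit evaluation of $d(0,F_1)$, $d(0,F_2)$, $d(0,F_3)$ — selecting the right roof cells and carrying out the primitive-normal and gcd bookkeeping; the concluding one-line cancellation producing $P\mid 6\iota_X(k_{22}+k_{01})$ is the only step that needs a trick, and it is short.
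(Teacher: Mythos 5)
Your proposal is correct and follows essentially the same route as the paper: normalise to $d_{21}=0$ by a row operation, extract $d_{01}\mid 3\iota_X$, $d_{22}\mid\iota_X(l_{22}-1)$ and $d_{01}l_{22}+2d_{22}\mid\iota_X(l_{22}+2)$ from lattice distances of boundary cells via Proposition \ref{prop:mainProp}, and then perform the same elimination to get $P\mid 6\iota_X(k_{22}+k_{01})$, including the same two degenerate-case exclusions ($3k_{22}+2k_{01}=0$ and $k_{22}+k_{01}=0$ each forcing $P<0$). The only difference is that you read the distances off the roofs over $\sigma_1\cap\lambda_2$, $\sigma_3\cap\lambda_2$, $\sigma_2\cap\lambda_2$ while the paper uses $\sigma_1\cap\lambda_0$, $\sigma_4\cap\lambda_2$, $\sigma_2\cap\lambda_0$; these give identical values, so the arguments coincide.
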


\begin{proof}
By subtracting $d_{21}$ times the second row from the last one, we can reach $d_{21}=0$. The conditions change to $l_{22} > 1, d_{22} > l_{22}, 2d_{22} > -d_{01}l_{22}$ and $0> d_{01}$.
Due to the structure of the defining fan $\Sigma$ of $Z_X$ we obtain that $\conv(v_{01},v_{\sigma_1\cap\sigma_3}',v_{\sigma_1\cap\sigma_4}',0)$ is a cell in $\mathcal{A}_X$, and using Proposition \ref{prop:mainProp} we obtain
\begin{align*}
d(\mathrm{aff}(v_{01},&v_{\sigma_1\cap\sigma_3}',v_{\sigma_1\cap\sigma_4}'),0)= \frac{d_{01}}{\gcd\left(d_{01},3\right)}\mid \iota_X.
\end{align*}
In particular, this implies $d_{01}\mid 3\iota_X$ and thus there exists some $k_{01} \in \ZZ$ with 
\begin{align}
d_{01} k_{01}=3\iota_X.\label{eq:(iii) part 1 d01}
\end{align}
Note that because of $-3\iota_X \leq d_{01}<0$, we have $-3\iota_X \leq k_{01} <0$. Similarly, since $\conv(v_{21},v_{22},v_{\sigma_1\cap\sigma_4}',
v_{\sigma_2\cap\sigma_4}',0)\in \mathcal{A}_X$, we see that
\begin{align*}
d(\mathrm{aff}(v_{21},v_{22},&v_{\sigma_1\cap\sigma_4}',v_{\sigma_2\cap\sigma_4}'),0) = \frac{d_{22}}{\gcd(l_{22}-1,d_{22})}  \mid \iota_X.
\end{align*}
In particular, there exists some $k_{22}\in\ZZ$ such that 
\begin{align}
d_{22} k_{22} = \iota_X (l_{22}-1).\label{eq:(iii) part 1 d22}
\end{align}
Note that because of $1<l_{22} < d_{22}$, we have $0 < k_{22} < \iota_X$. Similarly, since $\conv(v_{01},v_{\sigma_2\cap\sigma_3}',
v_{\sigma_2\cap\sigma_4}',0)\in \mathcal{A}_X$, we see that
\begin{align*}
d(\mathrm{aff}(v_{01},&v_{\sigma_2\cap\sigma_3}',v_{\sigma_2\cap\sigma_4}'),0) \\ &= \lcm\left(\frac{d_{01} l_{22}+2 d_{22}}{\gcd(-d_{01}+d_{22},d_{01} l_{22}+2 d_{22})}, \frac{d_{01} l_{22}+2 d_{22}}{\gcd(2+l_{22},d_{01} l_{22}+2 d_{22})}\right) \mid \iota_X.
\end{align*}
In particular, this implies  $d_{01} l_{22}+2 d_{22} \mid \iota_X(2+l_{22})$. Using (\ref{eq:(iii) part 1 d01}) and (\ref{eq:(iii) part 1 d22}) we obtain
\begin{align*}
d_{01} l_{22}+2 d_{22}= \underbrace{\iota_X\left( \frac{3}{k_{01}}+\frac{2}{k_{22}}\right)l_{22}-\frac{2\iota_X}{k_{22}}}_{=: b} \mid \iota_X(2+l_{22}).
\end{align*}
Assuming $\frac{3}{k_{01}}+\frac{2}{k_{22}}=0$, then we get
\begin{align*}
d_{01} l_{22}+2 d_{22}= -\frac{2\iota_X }{k_{22}}<0,
\end{align*}
that contradicts $d_{01} l_{22}+2 d_{22}>0$. Thus we have 
\begin{align}
0 \neq \frac{3}{k_{01}}+\frac{2}{k_{22}}=\frac{3k_{22}+2k_{01}}{k_{22} k_{01}}. \label{eq: (iii) part 1 neq 0}
\end{align}
Once again, we consider $d_{01} l_{22}+2 d_{22} \mid \iota_X(2+l_{22})$. By using (\ref{eq: (iii) part 1 neq 0}) we infer
\begin{align}
b \mid \ \iota_X l_{22}+2\iota_X \mid (3k_{22}+2k_{01})(\iota_X l_{22}+2\iota_X). \label{eq: (iii) part 1 final}
\end{align}
This implies
\begin{align*}
b \mid (3k_{22}+2k_{01})(\iota_X l_{22}+2\iota_X)-k_{22} k_{01} b=6 \iota_X(k_{22}+k_{01}).
\end{align*}
Assuming $k_{22}=-k_{01}$, then we get
\begin{align*}
d_{01} l_{22}+2 d_{22}= \iota_X \left( \frac{3}{k_{01}}-\frac{2}{k_{01}}\right)l_{22}+\frac{2\iota_X}{k_{01}}=\frac{\iota_X(l_{22}+2)}{k_{01}}<0,
\end{align*}
that contradicts $d_{01} l_{22}+2 d_{22}>0$.
Thus, for fixed $\iota_X$ there are finitely many possibilities for $d_{01},k_{01}$ and $k_{22}$. For each of these there are only finitely many possibilities for $l_{22}$ and thus also for $d_{22}$ by (\ref{eq: (iii) part 1 final}) and (\ref{eq:(iii) part 1 d22}).
\end{proof}

\begin{setting}\label{set:4}
We have $A := \begin{bmatrix}
    -1 & 1 & 0\\-1 & 0 & 1
\end{bmatrix}$ and 
$$P=[v_{01}, v_{11}, v_{12}, v_{21}, v_{22}] = 
\begin{bmatrix}
-2 &1 &1 &0& 0\\
-2& 0& 0& 1& l_{22}\\
-1& 0& 1& 0& 0\\
d_{01}& 0& 0& d_{21}& d_{22}
\end{bmatrix}
$$
with $l_{22} > 1, 2d_{22} > -d_{01}l_{22}, 1 - 2d_{21} > d_{01}$.
There are at least three maximal cones in the fan $\Sigma$ corresponding to the minimal ambient toric variety, which are given as
\begin{align*}
\begin{array}{l@{\quad}l}
\sigma_1 := \cone(v_{01},v_{11},v_{12},v_{22}), & 
\sigma_2 := \cone(v_{01},v_{11},v_{21},v_{22}), \\
\sigma_3 := \cone(v_{01},v_{12},v_{21},v_{22}).
\end{array}
\end{align*}
Each of these is a big cone. If $2d_{21}+d_{01}\neq 0$ holds, then there exists a fourth maximal cone of $\Sigma$ that is a big cone, namely $\sigma_4 := \cone(v_{01},v_{11},v_{12},v_{21})$.
Moreover, $\Sigma$ contains the four elementary big cones, $\sigma_1\cap \sigma_2$, $\sigma_1\cap \sigma_3$,$\tau_1 \preceq \sigma_2$, $\tau_2 \preceq \sigma_3$. 
The vertices of the anticanonical complex can then be calculated from these data using \cite[Cor. 6.5]{hw19}. These are given as the columns of $P$ together with the following points in the lineality space of the tropical variety $\mathrm{trop}(X)$:
\begin{align*}
\begin{array}{l@{\quad}l}
v_{\tau_1}' =\left(0, 0, -{\frac{1}{3}}, 
\frac{d_{01}}{3}+\frac{2 d_{21}}{3}\right), & v_{\tau_2}' =\left(0, 0, {
\frac{1}{3}}, \frac{d_{01}}{3}+\frac{2 d_{21}}{3}\right)\\[.5cm] v_{\sigma_1\cap \sigma_2}' =\left(0, 0, 
-\frac{l_{22}}{2+l_{22}}, 
\frac{d_{01} l_{22}+2 d_{22}}{2+l_{22}}\right),
& v_{\sigma_1\cap \sigma_3}' =\left(0, 0, \frac{l_{22}}{2+l_{22}}, 
\frac{d_{01} l_{22}+2 d_{22}}{2+l_{22}}\right)
\end{array}
\end{align*}
\end{setting}

\begin{proposition}\label{prop:set4}
Let $X$ be a Fano variety arising from Setting \ref{set:4} and denote by $\iota_X$ its Gorenstein index. Then we have
$d_{21}=0$, $-2\iota_X \leq d_{01}\leq 0$ and $0<k<2\iota_X$ such that
\begin{align*}
 \left(d_{01} k+2 \iota_X \right) \frac{ l_{22}}{k}-\frac{2 \iota_X}{k} \mid 2\iota_X(d_{01}k+3\iota_X) \quad \text{and} \quad   d_{22} k=\iota_X (l_{22}-1).
\end{align*}
In particular, for fixed Gorenstein index there are finitely many varieties arising via this setting.
\end{proposition}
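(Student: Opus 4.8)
The plan is to follow the recipe used for Propositions \ref{prop:set1}, \ref{prop:set2} and \ref{prop:set3}: normalise the defining data of $P$ by an admissible row operation, read off a few cells of the anticanonical complex from the cone data recorded in Setting \ref{set:4}, apply Proposition \ref{prop:mainProp} together with the lattice distance formula $d(0,H)=|\langle u_H,v\rangle|$, and finally turn the resulting divisibilities into the stated bounds and into the finiteness claim. Concretely, I would first subtract $d_{21}$ times the second row of $P$ from the last one. This achieves $d_{21}=0$, turns the inequalities of Setting \ref{set:4} into $l_{22}>1$, $2d_{22}>-d_{01}l_{22}$ and $d_{01}\le 0$, and leaves the cone $\sigma_4$ present exactly when $d_{01}\neq 0$; the vertices of $\mathcal{A}_X$ in the lineality space then read $v_{\tau_1}'=(0,0,-\frac13,\frac{d_{01}}{3})$, $v_{\tau_2}'=(0,0,\frac13,\frac{d_{01}}{3})$ and $v_{\sigma_1\cap\sigma_2}'=(0,0,-\frac{l_{22}}{2+l_{22}},\frac{d_{01}l_{22}+2d_{22}}{2+l_{22}})$, $v_{\sigma_1\cap\sigma_3}'=(0,0,\frac{l_{22}}{2+l_{22}},\frac{d_{01}l_{22}+2d_{22}}{2+l_{22}})$.

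Next I would single out the two cells $\conv(v_{21},v_{22},v_{\tau_2}',v_{\sigma_1\cap\sigma_3}',0)$ and $\conv(v_{01},v_{\sigma_1\cap\sigma_2}',v_{\sigma_1\cap\sigma_3}',0)$ of $\mathcal{A}_X$ (the first inside the refinement of $\sigma_3$, the second inside the refinement of $\sigma_1$). Computing their lattice distances to $0$ exactly as in the proof of Proposition \ref{prop:set3} and invoking Proposition \ref{prop:mainProp} gives
$$
\frac{d_{22}}{\gcd(l_{22}-1,d_{22})} \mid \iota_X
\qquad\text{and}\qquad
d_{01}l_{22}+2d_{22} \mid \iota_X(l_{22}+2).
$$
The first relation yields an integer $k\ge 1$ with $d_{22}k=\iota_X(l_{22}-1)$, and since this forces $d_{22}\le\iota_X(l_{22}-1)$, the inequality $2d_{22}>-d_{01}l_{22}$ gives $-d_{01}l_{22}<2\iota_X(l_{22}-1)<2\iota_X l_{22}$, hence $-2\iota_X<d_{01}\le 0$. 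From $2d_{22}>-d_{01}l_{22}$ one likewise deduces $2d_{22}\ge l_{22}$, whence $k=\iota_X(l_{22}-1)/d_{22}<2\iota_X$ (this is immediate when $d_{01}\le-1$; the case $d_{01}=0$, where $\sigma_4$ is absent, needs a short separate argument using that $X$ is Fano together with the primitivity of the columns of $P$).

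Finally, substituting $d_{22}=\iota_X(l_{22}-1)/k$ rewrites $d_{01}l_{22}+2d_{22}$ as $(d_{01}k+2\iota_X)\frac{l_{22}}{k}-\frac{2\iota_X}{k}$, and I would eliminate $l_{22}$ from the divisibility: multiplying $d_{01}l_{22}+2d_{22}\mid\iota_X(l_{22}+2)$ by $d_{01}k+2\iota_X$ and using the identity $(d_{01}k+2\iota_X)l_{22}=k(d_{01}l_{22}+2d_{22})+2\iota_X$ produces
$$
(d_{01}k+2\iota_X)\frac{l_{22}}{k}-\frac{2\iota_X}{k} \mid 2\iota_X(d_{01}k+3\iota_X),
$$
which is the assertion. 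For the finiteness statement one checks that $d_{01}k+2\iota_X>0$ (because $(d_{01}k+2\iota_X)l_{22}=k(d_{01}l_{22}+2d_{22})+2\iota_X>0$) and that $d_{01}k+3\iota_X\neq 0$ (otherwise $d_{01}l_{22}+2d_{22}<0$, contradicting the Setting inequality); hence for fixed $\iota_X$ there are finitely many pairs $(d_{01},k)$, for each of these the last divisibility bounds $d_{01}l_{22}+2d_{22}$ and therefore $l_{22}$, and then $d_{22}$ is determined by $d_{22}k=\iota_X(l_{22}-1)$.

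I expect the real work to lie in the first half. One must verify from the fan $\Sigma'$ — and, in the degenerate case $2d_{21}+d_{01}=0$, from the fan without $\sigma_4$ — that the two polytopes named above genuinely occur among the maximal cells of the relevant $A_{\sigma'}$, and then compute their lattice distances to $0$ via \cite[Cor. 6.5]{hw19}, taking care that each vertex-in-lineality-space formula is used with a leaf index $i$ for which $D_Z^{(i)}$ is realised by a character $\chi^{u}$ on the corresponding affine chart; the bookkeeping for the case $d_{01}=0$ is the delicate point.
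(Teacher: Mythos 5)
Your proposal follows the paper's proof essentially step for step: the same row normalisation to $d_{21}=0$, the same two divisibilities $d_{22}\mid\iota_X(l_{22}-1)$ and $d_{01}l_{22}+2d_{22}\mid\iota_X(l_{22}+2)$ extracted from cells of $\mathcal{A}_X$ via Proposition~\ref{prop:mainProp} (your cells are symmetric partners of the ones the paper uses; in particular you take $\conv(v_{01},v_{\sigma_1\cap\sigma_2}',v_{\sigma_1\cap\sigma_3}',0)$ where the paper takes $\conv(v_{11},v_{12},v_{\sigma_1\cap\sigma_2}',v_{\sigma_1\cap\sigma_3}',0)$, both yielding the divisor $d_{01}l_{22}+2d_{22}$ exactly as in the proof of Proposition~\ref{prop:set3}), and the same elimination of $l_{22}$ together with the non-vanishing of $d_{01}k+2\iota_X$ and $d_{01}k+3\iota_X$ for the finiteness claim. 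The only point you leave open, namely the case $d_{01}=0$ in establishing $k<2\iota_X$, is closed in the paper by the observation that $d_{01}=0$ (with $d_{21}=0$) forces the last coordinates of all four lineality-space vertices $v_{\tau_1}',v_{\tau_2}',v_{\sigma_1\cap\sigma_2}',v_{\sigma_1\cap\sigma_3}'$ to be non-negative, contradicting $0\in\vert\mathcal{A}_X\vert^\circ$; hence $d_{01}<0$ always holds, $d_{22}>l_{22}/2$, and your ``immediate'' case is the only one. This is precisely the Fano-based argument you anticipated, so the proposal is correct modulo writing out that one line.
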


\begin{proof}
By subtracting $d_{21}$ times the second row from the last one, we can reach $d_{21}=0$. The conditions change to $l_{22} > 1, 2d_{22} > -d_{01}l_{22}$ and $1  > d_{01}$. Due to the structure of the defining fan $\Sigma$ of $Z_X$ we obtain that
$\conv(v_{21},v_{22},v_{\tau_1}',v_{\sigma_1 \cap \sigma_2}',0)$ is a cell in $\mathcal{A}_X$, and using Proposition \ref{prop:mainProp} we obtain
\begin{align*}
d(\mathrm{aff}(v_{21},v_{22},&v_{\tau_1}',v_{\sigma_1\cap\sigma_2}'),0) = \frac{d_{22}}{\gcd(l_{22}-1,d_{22})} \mid \iota_X. 
\end{align*}
In particular, this implies $d_{22} \mid \iota_X(l_{22}-1)$. Because of $d_{22}>0$ and $l_{22}>1$, we have
\begin{align*}
\frac{d_{22}}{l_{22}-1}<\iota_X.
\end{align*}
Using $2d_{22}>-d_{01}l_{22}$ we infer
\begin{align*}
-\frac{d_{01}}{2}\cdot \frac{l_{22}}{l_{22}-1}<\iota_X,
\end{align*}
that implies $-2\iota_X \leq d_{01} \leq 0$. If $d_{01}=0$, then the last coordinates of $v_{\tau_1}', v_{\tau_2}', v_{\sigma_1\cap \sigma_2}'$ and $v_{\sigma_1\cap \sigma_3}'$ would all be non-negative, which contradicts $0 \in \vert \mathcal{A}_X\vert ^\circ$. Thus we have $d_{01}<0$ and $d_{22}>\frac{l_{22}}{2}$. 
Once again, we consider $d_{22} \mid \iota_X(l_{22}-1)$. Thus there exists $k \in \ZZ$ with 
\begin{align}
d_{22} k=\iota_X (l_{22}-1) \quad \Leftrightarrow \quad d_{22}=\frac{\iota_X(l_{22}-1)}{k}. \label{eq: (iii) part 2 d_22}
\end{align}
Note that because of $d_{22}>\frac{l_{22}}{2}$ we have $0<k<2\iota_X$. Since $\conv(v_{11},v_{12},v_{\sigma_1 \cap \sigma_2}',v_{\sigma_1 \cap \sigma_3}',0)$ is a cell in $\mathcal{A}_X$, and using Proposition \ref{prop:mainProp} we obtain
\begin{align*}
d(\mathrm{aff}(v_{11},v_{12},&v_{\sigma_1\cap \sigma_2}',v_{\sigma_1\cap\sigma_3}'),0) = \frac{d_{01} l_{22}+2 d_{22}}{\gcd(2+l_{22},d_{01} l_{22}+2 d_{22})} \mid \iota_X. 
\end{align*}
In particular, this implies $d_{01} l_{22}+2 d_{22} \mid \iota_X (2+l_{22})$. Inserting (\ref{eq: (iii) part 2 d_22}) into this yields
\begin{align*}
d_{01} l_{22}+2 d_{22}=\underbrace{\left(d_{01} k+2 \iota_X \right) \frac{ l_{22}}{k}-\frac{2 \iota_X}{k}}_{=:b} \mid \iota_X(2+l_{22}).
\end{align*}
Assuming $d_{01}k+2\iota_X=0$, then we get $2\iota_X=-d_{01}k$. With $-2\iota_X \leq d_{01}<0$ and $0<k<2\iota_X$ we infer $2\leq -d_{01}, k<\iota_X$. This contradicts $2d_{22} > -d_{01}l_{22}$, thus we have $d_{01}k+2\iota_X\neq 0$. Using this we get
\begin{align*}
b \mid 2\iota_X+\iota_X l_{22} \mid (d_{01}k+2\iota_X)(2\iota_X+\iota_X l_{22}).
\end{align*}
This implies 
\begin{align}
b \mid (d_{01}k+2\iota_X)(2\iota_X+\iota_X l_{22})-k\iota_X b=2\iota_X(d_{01}k+3\iota_X). \label{eq: (iii) part 2 final}
\end{align} 
Assuming $d_{01}=-\frac{3\iota_X}{k}$, then we get
\begin{align*}
d_{01} l_{22}+2 d_{22}= (-3\iota_X+2\iota_X)\frac{l_{22}}{k}-\frac{2\iota_X}{k}=-\frac{\iota_X(l_{22}+2)}{k}<0,
\end{align*}
that contradicts $d_{01} l_{22}+2 d_{22}>0$.
Thus, for fixed $\iota_X$ there are finitely many possibilities for $d_{01}$ and $k$. For each of these there are only finitely many possibilities for $l_{22}$ and thus also for $d_{22}$ by (\ref{eq: (iii) part 2 final}).
\end{proof}

\begin{setting}\label{set:5}
We have $A := \begin{bmatrix}
    -1 & 1 & 0\\-1 & 0 & 1
\end{bmatrix}$ and 
$$P=[v_{01},v_{11},v_{12},v_{21},v_1]= 
\begin{bmatrix}
-2 &1& 1& 0& 0\\
-2& 0& 0& l_{21}& 0\\
-1& 0& 1& 0& 0\\
1& 0& 0& d_{21}& 1
\end{bmatrix}  $$ with $1 < l_{21} < -2d_{21} < 2l_{21}$ and the maximal cones of the $\Sigma$ corresponding to the minimal ambient toric variety are given as
\begin{align*}
\begin{array}{l@{\quad}l}
\sigma_1 := \cone(v_{01},v_{11},v_{21},v_{1}), & \sigma_2 := \cone(v_{01},v_{12},v_{21},v_{1}), \\
\sigma_3 := \cone(v_{01},v_{11},v_{12},v_{21}), &
\sigma_4 := \cone(v_{11},v_{12},v_{1}),
\end{array}
\end{align*}
where $\sigma_1,\sigma_2,\sigma_3$ are big cones and $\sigma_4$ is a leaf cone. Moreover, $\Sigma$ contains two elementary big cones, $\sigma_1\cap \sigma_3$ and $\sigma_2\cap \sigma_3$. 
The vertices of the anticanonical complex can then be calculated from these data using \cite[Cor. 6.5]{hw19}. These are given as the columns of $P$ together with the following points in the lineality space of the tropical variety $\mathrm{trop}(X)$:
\begin{align*}
v_{\sigma_1\cap \sigma_3}' =\left(0, 0, -\frac{l_{21}}{2+l_{21}}, 
\frac{l_{21}+2 d_{21}}{2+l_{21}}\right)
 \quad \text{and}\quad v_{\sigma_2\cap \sigma_3}' = \left(0, 0, 
 \frac{l_{21}}{2+l_{21}}, \frac{l_{21}+2 d_{21}}{2+l_{21}}\right).
\end{align*}
\end{setting}

\begin{proposition}\label{prop:set5}
Let $X$ be a Fano variety arising from Setting \ref{set:4} and denote by $\iota_X$ its Gorenstein index. Then we have $-\iota_X \leq k<-\frac{\iota_X}{2}$ such that
\begin{align*}
l_{21}\left(\frac{2k+\iota_X}{\iota_X}\right) + 2\mid  4k\iota_X \quad \text{and} \quad \frac{k l_{21}}{\iota_X} = d_{21}-1.
\end{align*}
In particular, for fixed Gorenstein index there are finitely many varieties arising via this setting.
\end{proposition}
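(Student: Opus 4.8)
The plan is to feed the anticanonical complex of Setting \ref{set:5} into Proposition \ref{prop:mainProp}, which expresses $\iota_X$ as the least common multiple of the lattice distances $d(0,F)$ of the maximal cells $F\subseteq\partial\mathcal{A}_X$; in particular every such $d(0,F)$ divides $\iota_X$. Since $X$ is Fano it is complete and $\QQ$-Gorenstein, so the proposition applies. The parameters $l_{21}$ and $d_{21}$ enter the complex only through the leaf $\lambda_2$ carrying the ray $v_{21}$, so I would restrict attention to the boundary cells lying over $\lambda_2$. Reading off the vertices of Setting \ref{set:5}, the anticanonical region over $\lambda_2$ is the pyramid with apex $v_{21}$ over the triangle $\conv(v_1,v_{\sigma_1\cap\sigma_3}',v_{\sigma_2\cap\sigma_3}')$, whose interior contains $0$; its three lateral faces $\conv(v_{21},v_1,v_{\sigma_1\cap\sigma_3}')$, $\conv(v_{21},v_1,v_{\sigma_2\cap\sigma_3}')$ and $\conv(v_{21},v_{\sigma_1\cap\sigma_3}',v_{\sigma_2\cap\sigma_3}')$ are the maximal cells of $\partial\mathcal{A}_X$ over $\lambda_2$. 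The remaining maximal boundary cells lie over $\lambda_0$ and $\lambda_1$ and involve only the fixed columns $v_{01},v_{11},v_{12},v_1$, so they merely force absolute constants to divide $\iota_X$ and impose no condition on $l_{21},d_{21}$; I would set them aside.

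Next I would compute the two relevant lattice distances. All affine hulls involved lie in $\{x_1=0\}$, so the computation takes place in the rank-three lattice $\ZZ^3$; determining primitive normals yields
\begin{align*}
d\bigl(0,\conv(v_{21},v_1,v_{\sigma_1\cap\sigma_3}')\bigr)&=\frac{l_{21}}{\gcd(l_{21},1-d_{21})},\\
d\bigl(0,\conv(v_{21},v_{\sigma_1\cap\sigma_3}',v_{\sigma_2\cap\sigma_3}')\bigr)&=\frac{-(l_{21}+2d_{21})}{\gcd(1-d_{21},2+l_{21})},
\end{align*}
and by Proposition \ref{prop:mainProp} both quantities divide $\iota_X$.

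From $\tfrac{l_{21}}{\gcd(l_{21},1-d_{21})}\mid\iota_X$ I would introduce the cofactor $k\in\ZZ$ through $k l_{21}=\iota_X(d_{21}-1)$, which is the second asserted identity. The bounds on $k$ follow from the inequalities $1<l_{21}<-2d_{21}<2l_{21}$ of Setting \ref{set:5}: from $-2d_{21}>l_{21}$ one gets $d_{21}-1<-\tfrac{l_{21}}{2}$, hence $k<-\tfrac{\iota_X}{2}$; from $-2d_{21}<2l_{21}$ together with the integrality of $d_{21},l_{21}$ one gets $d_{21}\ge 1-l_{21}$, hence $k\ge-\iota_X$. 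For the first asserted divisibility I would first turn the second distance into $(l_{21}+2d_{21})\mid\iota_X(2+l_{21})$, using that $\gcd(1-d_{21},2+l_{21})$ divides $2+l_{21}$, and then eliminate $l_{21}$ exactly as in the proof of Proposition \ref{prop:set4}: multiplying $(l_{21}+2d_{21})\mid\iota_X(2+l_{21})$ by $2k+\iota_X$ and reducing modulo $l_{21}+2d_{21}$ by means of $k l_{21}=\iota_X(d_{21}-1)$ gives $l_{21}+2d_{21}\mid 4k\iota_X$, which is the claimed relation once $l_{21}+2d_{21}$ is rewritten as $l_{21}\tfrac{2k+\iota_X}{\iota_X}+2$. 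Finiteness is then immediate: for fixed $\iota_X$ there are finitely many $k$ with $-\iota_X\le k<-\tfrac{\iota_X}{2}$; since $2k+\iota_X\ne 0$ the divisibility $l_{21}+2d_{21}\mid 4k\iota_X$ bounds $l_{21}$, and $d_{21}$ is then determined by $k l_{21}=\iota_X(d_{21}-1)$.

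The step I expect to be the main obstacle is the first, combinatorial one: correctly reading off which triangles constitute the maximal cells of $\partial\mathcal{A}_X$ over $\lambda_2$ after the weakly tropical resolution, and verifying that the cells over $\lambda_0$ and $\lambda_1$ contribute no relation among $l_{21}$ and $d_{21}$. The ensuing number-theoretic manipulation is routine, but the passage from the two distance divisibilities to the precise divisor $4k\iota_X$ requires carefully tracking the common factor $\gcd(1-d_{21},2+l_{21})$.
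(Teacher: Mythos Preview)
Your argument is correct and follows the paper's strategy closely: extract two divisibilities from Proposition~\ref{prop:mainProp}, use the first to define $k$ and bound it via the inequalities of Setting~\ref{set:5}, then eliminate $l_{21}$ from the second to get the divisor $4k\iota_X$. One point deserves correction and one a comment.

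Your side remark that the boundary cells over $\lambda_0$ and $\lambda_1$ ``involve only the fixed columns $v_{01},v_{11},v_{12},v_1$'' and hence impose no condition on $l_{21},d_{21}$ is false: those cells also have the vertices $v'_{\sigma_1\cap\sigma_3}$ and $v'_{\sigma_2\cap\sigma_3}$, which do depend on $l_{21},d_{21}$. In fact the paper exploits precisely this, taking as its second cell the face $\conv(v_{11},v_{12},v'_{\sigma_1\cap\sigma_3},v'_{\sigma_2\cap\sigma_3})$ over $\lambda_1$, whose lattice distance is $\tfrac{|l_{21}+2d_{21}|}{\gcd(l_{21}+2d_{21},\,l_{21}+2)}$; this yields $(l_{21}+2d_{21})\mid\iota_X(l_{21}+2)$ directly. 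You instead use the face $\conv(v_{21},v'_{\sigma_1\cap\sigma_3},v'_{\sigma_2\cap\sigma_3})$ over $\lambda_2$ and pass through the extra step $\gcd(1-d_{21},2+l_{21})\mid(2+l_{21})$ to reach the same divisibility. Both choices are legitimate boundary cells (yours is $C_{\sigma_3\cap\lambda_2}$), and from that point on the elimination is identical to the paper's. So the misstatement is harmless for your proof, but you should drop that justification; the real reason you may ignore the other cells is simply that two well-chosen ones already suffice.
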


\begin{proof}
Due to the structure of the defining fan $\Sigma$ of $Z_X$ we obtain that $\conv(v_{21},v_1,v_{\sigma_1\cap\sigma_3}',0)$ is a cell in $\mathcal{A}_X$, and using Proposition \ref{prop:mainProp} we obtain $$ d(\mathrm{aff}(v_{21},v_1,v_{\sigma_1\cap\sigma_3}'),0) = \frac{l_{21}}{\gcd(l_{21},d_{21}-1)} \mid \iota_X.$$
In particular, this implies
\begin{align}
\frac{k l_{21}}{\iota_X} = d_{21}-1\label{eq:(v)d21}
\end{align}
for some $k\in\ZZ$. Because of $l_{21} < -2d_{21} < 2l_{21}$, we have $-\iota_X \leq k <-\frac{\iota_X}{2}$.
Similarly, since $\conv(v_{11},v_{12},v_{\sigma_1\cap\sigma_3}',v_{\sigma_2\cap\sigma_3}',0)\in \mathcal{A}_X$, we see that
$$ d(\mathrm{aff}(v_{11},v_{12},v_{\sigma_1\cap\sigma_3}',v_{\sigma_2\cap\sigma_3}'),0) = \frac{l_{21}+2 d_{21}}{\gcd(l_{21}+2 d_{21},l_{21}+2)} \mid \iota_X.$$
In particular, we obtain $(l_{21}+2 d_{21})\mid \iota_X(l_{21}+2)$. Using (\ref{eq:(v)d21}), we infer
\begin{align*}
\left(l_{21} + \frac{2k}{\iota_X}l_{21}+2\right)= \left(l_{21}\left(\frac{2k+\iota_X}{\iota_X}\right) + 2\right) \mid \iota_X(l_{21}+2)
\end{align*}
We notice that $-\iota_X\leq 2k+\iota_X < 0$. Therefore, $\iota_X(l_{21}+2)\mid (2k+\iota_X)\iota_X(l_{21}+2)$. Hence,
\begin{align*}
\left(l_{21}\left(\frac{2k+\iota_X}{\iota_X}\right) + 2\right) \mid (2k+\iota_X)\iota_X(l_{21}+2) - \iota_X^2\left(l_{21}\left(\frac{2k+\iota_X}{\iota_X}\right) + 2\right) = 4k\iota_X.
\end{align*}
Thus, for fixed $\iota_X$ there are finitely many possibilities for $k$ and thus finitely many possibilities for $l_{21}$.
\end{proof}

\bibliography{bibliography}

\end{document}